\documentclass[a4paper,11pt,twoside]{article}
\pdfoutput=1
\usepackage[T1]{fontenc}
\usepackage[english]{babel}
\usepackage{amsmath, amssymb, amsthm, amsfonts, mathtools, mathrsfs}	
\usepackage{enumerate}
\usepackage[shortlabels]{enumitem}
\usepackage{booktabs, caption, graphicx, subfig, wrapfig} 	
\usepackage{geometry}
\usepackage{algorithm}
\usepackage{algpseudocode} 
\usepackage{epstopdf} 
\usepackage{soul}

\usepackage{xcolor}

\usepackage{cite} 

\usepackage{fancyhdr} 

\usepackage{upgreek}
\usepackage{bbm}

\usepackage{microtype}

\newcommand{\R}{\mathbb{R}}
\newcommand{\N}{\mathbb{N}}

\numberwithin{equation}{section}

\theoremstyle{plain}
\newtheorem{theorem}{Theorem}[section] 
\newtheorem{proposition}[theorem]{Proposition} 
\newtheorem{lemma}[theorem]{Lemma}
\newtheorem{corollary}[theorem]{Corollary}

\theoremstyle{definition}

\newtheorem{remark}[theorem]{Remark}

\DeclarePairedDelimiter{\abs}{\lvert}{\rvert}
\DeclarePairedDelimiter{\norm}{\lVert}{\rVert}

\newcommand{\eps}{\varepsilon}
\renewcommand{\phi}{\varphi}
\renewcommand{\bar}{\overline}
\renewcommand{\vec}{\boldsymbol}

\def\genspazio #1#2#3#4#5{#1^{#2}(#5,#4;#3)}
\def\spazio #1#2#3{\genspazio {#1}{#2}{#3}T0}

\def\LT {\spazio L}
\def\HT {\spazio H}
\def\CT #1#2{C^{#1}([0,T];#2)}

\def\Lx #1{L^{#1}(\Omega)}
\def\Lt #1{L^{#1}(0,T)}
\def\Lqt #1{L^{#1}(Q_T)}
\def\Hx #1{H^{#1}(\Omega)}
\def\Hnx #1{H^{#1}_N(\Omega)}
\def\Wx #1{W^{#1}(\Omega)}

\def\Cqt #1{C^{#1}(\bar{Q_T})}

\def\Accorpa #1#2 #3 {\gdef #1{\eqref{#2}--\eqref{#3}}%
	\wlog{}\wlog{\string #1 -> #2 - #3}\wlog{}}%
	
\def\abramo #1{{\color{violet}#1}}

\def\ls{<}
\def\gs{>}
\def\mezzo {\frac{1}{2}}
\def\de {\mathrm{d}}
\def\D {\mathrm{D}}
\def\ddt {\frac{\de}{\de t}}

\def\n {\vec{n}}
\def\hh {\mathbbm{h}}
\def\phib {\bar{\phi}}
\def\mub {\bar{\mu}}
\def\sigmab {\bar{\sigma}}
\def\phiob {\bar{\phi_0}}
\def\sigmaob {\bar{\sigma_0}}

\def\Iad {\mathcal{I}_{\text{ad}}}

\def\Rcal {\mathcal{R}}

\def\phil {\phi_\lambda}
\def\psil {\psi_\lambda}
\def\phitil {\widetilde{\phi}}
\def\mutil {\widetilde{\mu}}
\def\sigmatil {\widetilde{\sigma}}

\usepackage{hyperref}
\usepackage{orcidlink}

\begin{document}
	
	\begin{center}
		
		{\LARGE \textbf{Carleman estimates for backward \\
		\vskip0.2cm
        Cahn--Hilliard-reaction-diffusion problems}}

		\vskip0.5cm
		
		{\large\textsc{Abramo Agosti$^1$} \orcidlink{0000-0001-5706-3772}} \\
		{\normalsize e-mail: \texttt{abramo.agosti@unipv.it}} \\
		\vskip0.35cm
		
		{\large\textsc{Elena Beretta$^2$} \orcidlink{0000-0002-4517-6945}} \\
		{\normalsize e-mail: \texttt{eb147@nyu.edu}} \\
		\vskip0.35cm
		
		{\large\textsc{Cecilia Cavaterra$^{3, 4}$} \orcidlink{0000-0002-2754-7714}} \\
		{\normalsize e-mail: \texttt{cecilia.cavaterra@unimi.it}} \\
		\vskip0.35cm
		
		{\large \textsc{Matteo Fornoni$^3$} \orcidlink{0000-0002-9787-047X}} \\
		{\normalsize e-mail: \texttt{matteo.fornoni@unimi.it}} \\
		\vskip0.35cm
		

		{\large\textsc{Masahiro Yamamoto$^{5}$}} \\
		{\normalsize e-mail: \texttt{myama@ms.u-tokyo.ac.jp}} \\
		\vskip0.35cm
		
		{\footnotesize $^1$Department of Mathematics ``F. Casorati'', University of Pavia, 27100 Pavia, Italy}
		\vskip0.1cm
		
		{\footnotesize $^2$Division of Science, New York University Abu Dhabi, Saadiyat Island, Abu Dhabi, United Arab Emirates}
		\vskip0.1cm
		
		{\footnotesize $^3$Department of Mathematics ``F. Enriques'', University of Milan, 20133 Milan, Italy}
		\vskip0.1cm

        {\footnotesize$^4$Istituto di Matematica Applicata e Tecnologie Informatiche ``Enrico Magenes'', CNR, 27100 Pavia, Italy}
        \vskip0.1cm

		{\footnotesize$^5$Department of Mathematical Sciences, The University of Tokyo, 153-8914 Tokyo, Japan}
        \vskip0.5cm
		
	\end{center}

    \begin{abstract}

    \noindent
    We study a backward inverse problem for a coupled Cahn--Hilliard-reaction-diffusion system. Our main result is a Carleman estimate for a fourth-order/second-order parabolic system with cross-diffusion terms, which allows us  to derive conditional stability estimates for the reconstruction of past states from a single final-time observation: H\"older stability at positive times and logarithmic stability for the initial datum.  We then apply the Carleman estimate to a phase-field tumour growth model coupling the tumour volume fraction with a nutrient concentration. In this setting, we obtain backward uniqueness and quantitative stability for the recovery of early tumour states, improving earlier results based on logarithmic convexity, which only yielded uniqueness under an additional smallness assumption on the chemotaxis coefficient. We also discuss how these results support Lipschitz stability on finite-dimensional admissible sets, which is relevant for ensuring convergence of iterative discretisation algorithms.

    \vskip3mm
		
    \noindent {\bf Key words:} Carleman estimates, Cahn--Hilliard equation, reaction-diffusion equation, backward inverse problem, stability estimates, tumour growth models. 
    
    \vskip3mm
    
    \noindent {\bf AMS (MOS) Subject Classification:} 
    35G31, 
    35Q92, 
    35R30, 
    92C50. 

\end{abstract}


\pagestyle{fancy}
\fancyhf{}	
\fancyhead[EL]{\thepage}
\fancyhead[ER]{\textsc{Agosti -- Beretta -- Cavaterra -- Fornoni -- Yamamoto}} 
\fancyhead[OL]{\textsc{Carleman estimates for backward Cahn--Hilliard-reaction-diffusion}} 
\fancyhead[OR]{\thepage}

\renewcommand{\headrulewidth}{0pt}
\setlength{\headheight}{5mm}

\thispagestyle{empty} 

\section{Introduction}
\label{sec:intro}
	
Backward inverse problems for dissipative evolution equations are classical examples of severely ill-posed problems: even small perturbations in final observations may generate large errors when propagated backwards in time. Despite this intrinsic instability, such problems are of considerable interest in applications where only late-time measurements are available and the goal is to infer an earlier state of the system. This issue arises naturally in mathematical biology, and
particularly in tumour-growth modelling, where clinically accessible data are typically collected at diagnosis or during drug administration after a significant evolution period, while an important objective is to reconstruct a previous
stage of the disease. 
For instance, characterizing the initial tumour location, shape and extent can add important complementary diagnostic and prognostic indications with respect to standard informations based on biopsy and radiogenomics, being correlated to distinct tumour mutations and aggressiveness (see e.g. \cite{Roux,Li,SSMB2020}). This may lead to the attainment of improved prognostic indications for patient survival and to the design of more effective surgical intervention strategies. Evidences for the correlation between tumour location and shape with its grading patterns have been widely provided in literature, e.g. for gliomas \cite{Jungk,Chaichana} and prostate cancer \cite{Erbersdobler}. Also, for highly infiltrative tumours like gliomas, the identification of the tumour location at initiation or after surgery can help in  identifying possible infiltrative tumour regions which are typically not visible in clinical images, leading to the design of more effective radiation dose planning \cite{Rathore}. The reconstruction of previous tumour configurations, given an MRI scan, may be also crucial during therapy administration, leading to the possibility of identifying the tumour cells sensitivity to therapy and its resistant areas by reconstructing the tumour response backward in time. Determining the initial tumour configuration is also required for robustly identifying tumour growth parameters and calibrate the predictive tumour growth models in a patient-specific way, which may lead to accurate predictions of tumour evolution and recurrence (see e.g. \cite{Agosti1,Agosti2,Lorenzo2022_review}).

A powerful tool for analysing backward and inverse problems is provided by Carleman estimates, namely weighted integral inequalities that yield uniqueness, observability, and conditional stability for evolution equations. 
For parabolic problems, Carleman techniques have been extensively developed and successfully applied; we refer, for instance, to the foundational works of Yamamoto and Vessella \cite{yamamoto2009carleman, yamamoto, vessellabook, vessella2003}. However, in the specific context of backward Cahn--Hilliard-type equations, the available literature is much more limited. A closely related contribution, based on Carleman estimates, is the study by Shang and Li \cite{SL2021} on a backward fourth-order parabolic equation, establishing conditional H\"older stability for intermediate data  from final-time measurements. 

The present paper extends this line of research by considering a coupled Cahn--Hilliard and reaction-diffusion system with cross-diffusion terms. By exploiting the Carleman framework, we derive not only intermediate-time stability but also logarithmic estimates at the initial time. Furthermore, within the tumour-growth setting, we provide a refined Lipschitz stability result for finite-dimensional classes of admissible data, following an approach  similar to that in \cite{BCFLR2024}.

Motivated by these issues, we study the backward problem for the semilinear coupled system
\begin{align*}
\partial_t u + \Delta^2 u + a\Delta v &= F(x,t,u,v), \\
\partial_t v - \Delta v + b\Delta u &= G(x,t,u,v),
\end{align*}
subject to homogeneous Neumann boundary conditions. This model couples a fourth-order Cahn-Hilliard-type component with a second-order reaction-diffusion component and includes cross-diffusion effects through the coefficients $a,b>0$. Given the final measurement $(u(\cdot,T),v(\cdot,T))$, the inverse problem we address consists in determining earlier states $(u(\cdot,t_0),v(\cdot,t_0))$, with $0\leq t_0<T$. The abstract framework above is sufficiently general to encompass the phase-field tumour-growth model considered later in the paper.

Our first main result is a Carleman estimate for this backward coupled system. More precisely, we establish a weighted inequality for strong solutions which controls the time derivatives, the principal spatial terms, and the lower-order components of the solution in terms of the weighted source terms and explicit boundary contributions. A notable feature of this estimate is that no smallness restriction is imposed on the cross-diffusion coefficients $a$ and $b$. This aspect is essential for applications, since it shows that the method remains effective even in the presence of arbitrarily large cross-diffusion effects.
 The Carleman estimate is then applied to derive conditional stability results for the associated backward inverse problem. For every positive intermediate time $t_0\in(0,T)$, we prove a H\"older-type stability estimate recovering $(u(\cdot,t_0),v(\cdot,t_0))$ from the final data under a natural a priori bound on the initial state. Compared with the general strategy in \cite{yamamoto}, our argument yields a sharper form of the estimate by avoiding an additional logarithmic factor. Since the corresponding H\"older exponent degenerates as $t_0\to 0$, one cannot expect the same type of stability at the initial time; nevertheless, we show that the initial datum satisfies a logarithmic stability estimate. This transition from H\"older stability at positive times to a conditional logarithmic stability at time zero reflects the severe ill-posedness of backward parabolic problems.

A main motivation for this abstract analysis comes from phase-field models for tumour growth. In Section~4 we consider a Cahn--Hilliard and reaction-diffusion system of the form 
\begin{alignat*}{2}
    	& \partial_t \phi - \Delta \mu
    	= P(\phi) \left(\sigma + \chi (1-\phi) - \mu \right) - c(x,t) \hh (\phi)
    	\qquad && \hbox{in $Q_T$,} \\ 
    	& \mu 
    	= - \Delta \phi + F'(\phi) - \chi \sigma 
    	\qquad && \hbox{in $Q_T$,} \\ 
    	& \partial_t \sigma - \Delta \sigma + \chi \Delta \phi 
    	= - P(\phi) \left(\sigma + \chi (1-\phi) - \mu \right) + \kappa (1 - \sigma)
    	\qquad && \hbox{in $Q_T$,} \\ 
    	& \partial_{\n} \phi = \partial_{\n} \mu = \partial_{\n} \sigma = 0 
    	\qquad && \hbox{in $\Sigma_T$,} \\ 
    	& \phi(0) = \phi_0, \quad \sigma(0) = \sigma_0 
    	\qquad && \hbox{in $\Omega$,} 
    \end{alignat*}
in which the phase variable $\phi$ represents the tumour volume fraction, $\mu$ is the associated chemical potential, and $\sigma$ denotes the concentration of a nutrient driving tumour proliferation, while chemotaxis is encoded by a coefficient $\chi\ge 0$. This model was introduced in \cite{ABCFR2025} as a variant of the diffuse-interface multiphase mixture model originally developed in \cite{HPZO2013}. In the latter model the mass exchange terms between the phases were represented by chemical kinetic rates incorporating tumour cells proliferation and nutrient consumption. In the present model apoptosis and nutrient release by the healthy tissues are also incorporated in the source terms. Under standard assumptions ensuring strong well-posedness, the difference of two solutions can be rewritten in the same structural form as the abstract system above, with cross-diffusion coefficient equal to $\chi$. Therefore, the Carleman estimate developed in Sections~2 and~3 becomes directly applicable to the inverse problem of reconstructing earlier tumour states from a single final observation.
We stress that our general Carleman estimate also applies to other tumour growth models with similar structure, with possibly different reaction terms, as long as they admit a unique strong solution \cite{CSS2021, GL2017, FLOW2019, Fritz2023}.

This application extends the previous analysis in \cite{ABCFR2025}. In that work, backward uniqueness for the tumour growth model was obtained by means of a logarithmic convexity argument, but no quantitative stability estimate was derived and an additional smallness condition on the chemotactic coefficient $\chi$ was required. By contrast, the Carleman approach adopted here removes that extra restriction and yields explicit conditional H\"older stability for positive times, backward uniqueness, and logarithmic stability for the initial data. In this sense, Carleman estimates provide a broader and more informative framework than logarithmic convexity for the class of coupled fourth-/second-order systems considered in this paper.

Finally, we address the stability regime that is most relevant for reconstruction procedures, \cite{de2012local, BCFLR2025}. Indeed, when the admissible initial data are restricted to a finite-dimensional subspace, for instance a discrete space arising in numerical approximation, we combine the differentiability properties of the forward map with a quantitative injectivity estimate for the linearised problem and the general result from \cite{BV2006} to prove a Lipschitz stability estimate. This refined result is important from the numerical viewpoint, since it provides a rigorous foundation for reconstruction algorithms, even though the corresponding stability constants deteriorate with the dimension of the subspace and with the final time, consistently with the ill-posed character of the backward problem \cite{BCFLR2024, BCFLR2025}. The same strategy also suggests further developments for related phase-field tumour-growth models and for the analysis of reconstruction methods based on finite-dimensional approximations.

The paper is organised as follows. In Section~2 we prove the abstract Carleman estimate for backward Cahn--Hilliard-reaction-diffusion systems. Section~3 is devoted to the H\"older and logarithmic stability results for the associated backward inverse problem. In Section~4 we apply the abstract theory to the tumour growth model, obtain the corresponding uniqueness and stability results, and derive Lipschitz stability for initial states belonging to a finite dimensional subspace. Some final remarks and possible perspectives are collected in the concluding section.

\section{Carleman estimate}
\label{sec:carleman}

In this section, we consider a general semilinear fourth-order coupled parabolic system, and we derive a suitable Carleman estimate for the backward problem.
Let $\Omega \subset \R^d$, $d \in \N$, be an open bounded domain with smooth enough boundary $\partial \Omega$. 
Let $T > 0$ be a fixed final time, and set $Q_T := \Omega \times (0,T)$ and $\Sigma_T := \partial \Omega \times (0,T)$.
We consider the following semilinear parabolic system:
\begin{alignat}{2}
	& \partial_t u + \Delta^2 u + a \, \Delta v = F(x,t,u,v) \quad && \hbox{in $Q_T$,} \label{eq:u} \\
	& \partial_t v - \Delta v + b \, \Delta u = G(x,t,u,v) \quad && \hbox{in $Q_T$,} \label{eq:v}
\end{alignat}
endowed with the homogeneous Neumann boundary conditions
\begin{equation}
	\label{eq:bc}
	\partial_{\n} u = \partial_{\n} \Delta u = \partial_{\n} v = 0 \quad \hbox{on $\Sigma_T$,} 
\end{equation}
and with suitable initial conditions.
The coefficients $a,b \ge 0$ are given constants, promoting some cross-diffusion effects in the system, while $F,G: Q_T \times \Hx2 \times \Hx1 \to \R$ are two possibly non-linear Caratheodory functions, namely measurable in $(x,t) \in Q_T$ for every fixed $(u,v) \in \Hx2 \times \Hx1$ and continuous in $(u,v) \in \Hx2 \times \Hx1$ for almost every fixed $(x,t) \in Q_T$. Moreover, there exists a function $K \in \Lt \infty$, $K(t) \ge 0$ for a.e.~$t \in (0,T)$, such that, for almost every $t\in(0,T)$ and for any
$w_1\in H^2(\Omega)$, $w_2\in H^1(\Omega)$, the right-hand sides $F$ and $G$ satisfy
\begin{equation}
	\label{eq:hp_FG}
	\int_\Omega \left( \abs{F(x,t,w_1,w_2)}^2 + \abs{G(x,t,w_1,w_2)}^2 \right) \, \de x \le K(t) \left( \norm{w_1}^2_{\Hx2} + \norm{w_2}^2_{\Hx1} \right).
\end{equation}
We further assume that $(u,v)$ is a strong solution to the system \eqref{eq:u}--\eqref{eq:bc}, satisfying the equations almost everywhere in $Q_T$ and enjoying the regularity
\begin{equation}
	\label{eq:hp_reg}
	\begin{split}
		& u \in \HT 1 {\Lx2} \cap \CT 0 {\Hx2} \cap \LT 2 {\Hx4}, \\
		& v \in \HT 1 {\Lx2} \cap \CT 0 {\Hx1} \cap \LT 2 {\Hx2}. 
	\end{split}
\end{equation}
We are interested in a Carleman estimate to address the following backward-in-time inverse problem: given a spatial measurement of the solution $(u,v)$ at the final time $t = T$, i.e., $(u(\cdot,T), v(\cdot,T)) \in \Hx2 \times \Hx1$, we ask whether we can uniquely determine the earlier states $(u(\cdot,t_0), v(\cdot,t_0)) \in \Hx2 \times \Hx1$ for some $t_0 \in [0,T)$.
To this end, given a parameter $\lambda > 0$, we introduce the weight functions 
\begin{equation}
	\label{eq:hp_weight1}
	\phil \in C^2([0,T]), \quad \psil \in C^0([0,T]), \quad \psil(t) > 0 \quad \hbox{for any $t \in [0,T]$},
\end{equation}
such that there exist constants $L_1, L_2 > 0$, independent of $\lambda$, but possibly depending on $T$, for which
\begin{equation}
	\label{eq:hp_weight2}
	\abs{\phil'(t)} \le L_1 \lambda \psil(t), \quad 
	\phil''(t) \ge L_2\lambda^2 \psil(t), \quad 
	\hbox{for any $t \in [0,T]$.}
\end{equation}
Some possible choices for the weight functions $\phil$ and $\psil$ satisfying \eqref{eq:hp_weight1}--\eqref{eq:hp_weight2} are given by
\begin{itemize}
    \item \emph{Exponential weights:} $\phil(t) = \psil(t) = e^{\lambda t}$ for $\lambda > 0$, with $L_1 = L_2 =1$.
    \item \emph{Polynomial weights:} $\phil(t) = \psil(t) = (t+1)^\lambda$ for $\lambda \geq 2$. Indeed, in this case, for any $t \in [0,T]$, we have that 
    \[
        \abs{\phil'(t)} = \abs{\lambda (t+1)^{\lambda - 1}} = \abs*{\frac{\lambda}{t+1}} \phil(t) \le \lambda \phil(t),
    \]
    and that
    \[
        \phil''(t) = \lambda (\lambda - 1) (t+1)^{\lambda - 2} = \lambda (\lambda - 1) \frac{1}{(t+1)^2} \phil(t) \ge \frac{\lambda^2}{2} \frac{1}{(T+1)^2} \phil(t),
    \]
    where we used that $\lambda - 1 \ge \frac{\lambda}{2}$ for any $\lambda \geq 2$.
    Thus, \eqref{eq:hp_weight2} holds with $L_1 = 1$ and $L_2 = \frac{1}{2(T+1)^2}$.
\end{itemize}
Then, we have the following result.

\begin{theorem}
	\label{thm:carleman}
	Let $(u,v)$ be a strong solution to \eqref{eq:u}--\eqref{eq:bc}, and assume \eqref{eq:hp_FG}--\eqref{eq:hp_reg}. 
	For any $\lambda > 0$, let $\phil$ and $\psil$ be two weight functions satisfying \eqref{eq:hp_weight1}--\eqref{eq:hp_weight2}.

	Then, there exist constants $C > 0$, depending only on the parameters of the system and on $T$, and $\lambda_0, s_0 > 0$, such that for any $\lambda > \lambda_0$ and $s > s_0$, the following estimate holds:
    \begin{equation}
		\label{eq:carleman}
		\begin{split}
			& \int_{Q_T} \left\{ \frac{1}{s \psil(t)} \left( \abs{\partial_t u}^2 + \abs{\partial_t v}^2 \right) + \lambda \left( \abs{\Delta u}^2 + \abs{\nabla v}^2 \right) + s \lambda^2 \psil(t) \left( \abs{u}^2 + \abs{v}^2 \right) \right\} e^{2s\phil(t)} \, \de x \, \de t \\
			& \quad \le C \left( \int_{Q_T} \left( \abs{F(x,t,u,v)}^2 + \abs{G(x,t,u,v)}^2 \right) e^{2s\phil(t)} \, \de x \, \de t 
			+ \sum_{k=1}^7 \abs{B_k} \right).
		\end{split}
	\end{equation}
	where the boundary terms $B_k$, $k=1,\dots,7$, are given by
	\begin{align*}
		& B_1 := s \int_\Omega \left[ \phil'(t) \abs{u}^2 e^{2s\phil(t)} \right]^0_T \, \de x, 
		&& B_2 := \int_\Omega \left[ \abs{\Delta u}^2 e^{2s \phil(t)} \right]^T_0 \, \de x, \\
		& B_3 := s \int_\Omega \left[ \phil'(t) \abs{v}^2 e^{2s\phil(t)} \right]^0_T \, \de x, 
		&& B_4 := \int_\Omega \left[ \abs{\nabla v}^2 e^{2s \phil(t)} \right]^T_0 \, \de x, \\
		& B_5 := 2 \int_\Omega \left[ \abs{\nabla u \cdot \nabla v} e^{2s \phil(t)} \right]^T_0 \, \de x, 
		&& B_6 := \frac{\lambda}{2} \int_\Omega \left[ \abs{u}^2 e^{2s\phil(t)} \right]^0_T \, \de x, \\
		& B_7 := \frac{\lambda}{2} \int_\Omega \left[ \abs{v}^2 e^{2s\phil(t)} \right]^0_T \, \de x.
	\end{align*}
\end{theorem}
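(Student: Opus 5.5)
This is a time-only-weight Carleman estimate, in the spirit of the backward-parabolic estimates of Yamamoto. The plan is to conjugate, split into symmetric and antisymmetric parts, and expand the cross term, with the convexity of $\phil$ producing the positive bulk terms.

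\textbf{Conjugation and splitting.} Set $w := e^{s\phil}u$ and $z := e^{s\phil}v$. Since the weight depends only on $t$, the spatial operators commute with multiplication by $e^{s\phil}$, and we get
\begin{align*}
 \partial_t w + \Delta^2 w + a\Delta z - s\phil' w &= e^{s\phil}F =: f,\\
 \partial_t z - \Delta z + b\Delta w - s\phil' z &= e^{s\phil}G =: g.
\end{align*}
The Neumann conditions \eqref{eq:bc} are preserved by $w,z$. The cross-diffusion terms $a\Delta z$ and $b\Delta w$ will not be moved into the source. Instead we multiply the $w$-equation by $b$ and the $z$-equation by $a$; if $a$ or $b$ vanishes, treat that case separately. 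This symmetrises the coupling, because integrating by parts with the Neumann conditions gives
\[
 ab\int_\Omega (\Delta z\,\Delta^2 w - \Delta w\,\Delta z)\,\de x
\]
type pairings that combine into exact time derivatives.

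We then write the conjugated operator as $P_+ + P_-$. The symmetric part is $P_+ = (\Delta^2 w - s\phil' w,\ -\Delta z - s\phil' z)$. The antisymmetric part is $P_- = (\partial_t w,\ \partial_t z)$. The coupling terms are distributed between the two parts.

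\textbf{Cross term.} Take the weighted $L^2$ norm of the right-hand side, with the weights $b,a$ on the two components:
\[
 \norm{P_+}^2 + \norm{P_-}^2 + 2(P_+,P_-) = \norm{f}^2 + \norm{g}^2 .
\]
Expanding $2(P_+,P_-)$ produces the following.
\begin{itemize}
 \item $2(\Delta^2 w,\partial_t w) = \ddt\norm{\Delta w}^2$. This gives boundary terms of type $B_2$.
 \item $-2s(\phil' w,\partial_t w) = -s[\phil'\norm{w}^2]_0^T + s\int \phil''|w|^2$. This gives the bulk term $s\lambda^2\psil|w|^2$, via $\phil''\ge L_2\lambda^2\psil$, plus $B_1$.
 \item The analogous terms for $z$ give $B_3$ and $B_4$.
 \item The coupling terms $a(\Delta z,\partial_t w)$ and $b(\Delta w,\partial_t z)$, after integration by parts in space, combine to
 \[
 -\ddt\int_\Omega \nabla w\cdot\nabla z\,\de x .
 \]
 This produces $B_5$ once the multipliers are chosen suitably; the exact scalings are to be fixed in the computation.
\end{itemize}
Rewriting everything back in terms of $u,v$ (with $|\nabla w|^2 = e^{2s\phil}|\nabla u|^2$, and so on) yields the boundary terms as stated.

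\textbf{The $\lambda$-weighted second-order terms.} The terms $\lambda(|\Delta u|^2 + |\nabla v|^2)$ are not produced by the cross term. I would obtain them with a second multiplier: test the $w$-equation with $\lambda w$ and the $z$-equation with $\lambda z$. This gives
\[
 \lambda\norm{\Delta w}^2 + \lambda\norm{\nabla z}^2 + \frac{\lambda}{2}\ddt\left(\norm{w}^2 + \norm{z}^2\right) - \lambda s\phil'\left(\norm{w}^2 + \norm{z}^2\right) = \lambda(f,w) + \lambda(g,z) + \text{coupling}.
\]
The time derivative here is the source of $B_6$ and $B_7$. The term $\lambda s\phil'|w|^2$ is bounded by $L_1 s\lambda^2\psil|w|^2$, which is where the relation between $L_1$ and $L_2$ matters. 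It will probably require the weight in this identity to be scaled down, for instance by multiplying by $\lambda/M$ with $M$ large relative to $L_1/L_2$, or by using $\lambda^{-1}$ weights appropriately. The coupling terms are $\lambda a(\Delta z,w) = -\lambda a(\nabla z,\nabla w)$, where $\norm{\nabla w}^2 \le \norm{w}\,\norm{\Delta w}$. These are absorbed by Young's inequality into $\lambda\norm{\Delta w}^2$, $\lambda\norm{\nabla z}^2$ and $s\lambda^2\psil\norm{w}^2$ for $s$ large. This is where no smallness of $a,b$ is needed: the constants only enter through $s_0$.

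\textbf{Time derivatives.} The terms $\frac{1}{s\psil}|\partial_t u|^2$ come from $\norm{P_-}^2 \ge \norm{\partial_t w}^2$, after dividing by $s\psil$. Alternatively they come directly from the equation: writing $\partial_t w = f - \Delta^2 w - \dots$, we have
\[
 \norm{\partial_t w}^2 \lesssim \norm{P_-}^2 .
\]
Converting back uses $\partial_t u\, e^{s\phil} = \partial_t w - s\phil' w$, so that
\[
 \frac{1}{s\psil}\,s^2\phil'^2|w|^2 \le L_1^2 s\lambda^2\psil|w|^2 ,
\]
which is absorbed by the already obtained bulk term. Finally, the source is bounded using \eqref{eq:hp_FG} with $K\in L^\infty$:
\[
 \int (|F|^2+|G|^2)e^{2s\phil} \le \norm{K}_\infty \int\left(\norm{u}_{H^2}^2 + \norm{v}_{H^1}^2\right)e^{2s\phil} .
\]
By elliptic regularity for the Neumann Laplacian, this is bounded by $\int\left(|\Delta u|^2 + |u|^2 + |\nabla v|^2 + |v|^2\right)e^{2s\phil}$, which is absorbed into the left side once $\lambda > \lambda_0$ and $s > s_0$. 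The statement as written keeps the $F,G$ term on the right, so this absorption step is only needed later.

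\textbf{Main obstacle.} I expect the hardest step to be the exact bookkeeping of the coupling terms in the cross product, so that they become pure time derivatives (giving $B_5$) rather than indefinite bulk terms. Getting this to work probably forces the asymmetric multipliers $b$ and $a$ on the two equations; if that fails, the fallback is to bound the coupling by
\[
 \varepsilon\norm{\partial_t}^2 + C_\varepsilon\lambda^{-1}\cdot\lambda\norm{\Delta}^2
\]
and take $\lambda$ large.
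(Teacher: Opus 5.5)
For $a,b>0$ your plan is essentially the paper's proof. Both arguments conjugate by $e^{s\varphi_\lambda}$, expand the squared $L^2(Q_T)$ norm of each conjugated equation, and discard everything except $\|\partial_t w\|^2$ and the cross term. Both then weight the two inequalities so that the coupling collapses to $\frac{\mathrm{d}}{\mathrm{d}t}\int_\Omega \nabla w\cdot\nabla z\,\mathrm{d}x$; the paper uses weights $1/a,1/b$, which is yours up to the factor $ab$. Finally both test with $\lambda w,\lambda z$ and convert $\partial_t w$ back to $\partial_t u$.

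No relation between $L_1$ and $L_2$ is needed in the second step. The paper bounds $\lambda\int_{Q_T}|\Delta w|^2$ by $C\int_{Q_T}s\lambda^2\psi_\lambda(|w|^2+|z|^2)$ plus source and boundary terms. It then inserts the first estimate, which already controls that bulk term by the right-hand side. Your rescaling by $1/M$ amounts to the same thing.

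The proposal leaves open the case $a>0$, $b=0$, which the theorem includes, and your stated fallback cannot handle it.
\begin{itemize}
\item Your weights $(b,a)$ discard the $w$-equation entirely.
\item No choice of positive weights turns the lone term $2a\int_{Q_T}\partial_t w\,\Delta z$ into a time derivative.
\item Bounding that term by $\varepsilon\|\partial_t w\|^2+C_\varepsilon\|\Delta z\|^2$ does not help, because the left-hand side controls only $\lambda|\nabla z|^2$, not $|\Delta z|^2$.
\item You also cannot move $a\Delta v$ into $F$, since \eqref{eq:hp_FG} only allows $\|v\|_{H^1}$.
\end{itemize}
The missing idea is to manufacture the symmetric partner. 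Add $\Delta u$ to both sides of the $v$-equation, i.e.\ run your argument with $b=1$ and $\widetilde G=G+\Delta u$. Equivalently, integrate by parts in time:
\[
2a\int_{Q_T}\partial_t w\,\Delta z = -2a\Big[\int_\Omega\nabla w\cdot\nabla z\Big]_0^T-2a\int_{Q_T}\Delta w\,\partial_t z .
\]
The extra term $\int_{Q_T}|\Delta u|^2e^{2s\varphi_\lambda}$ (respectively $\|\Delta w\|^2$ after Young) is then absorbed by $\lambda\int_{Q_T}|\Delta u|^2e^{2s\varphi_\lambda}$ on the left once $\lambda$ is large. For $a=0$, $b>0$ your fallback does work, and it is equivalent to the paper's step of moving $b\Delta u$ into $G$.
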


\begin{remark}
    Estimate \eqref{eq:carleman} is an inequality of Carleman type, which estimates $u$, $v$ and the derivatives uniformly in large parameters $s>0$, $\lambda>0$. Thanks to such large parameters as factors on the left-hand side, the estimate \eqref{eq:carleman} holds also for a system of semilinear parabolic inequalities
    \begin{alignat*}{2}
	   & \abs{\partial_t u + \Delta^2 u + a \, \Delta v} \le R_1(x,t,u,v) \quad && \hbox{in $Q_T$,} \\
	   & \abs{\partial_t v - \Delta v + b \, \Delta u} \le R_2(x,t,u,v) \quad && \hbox{in $Q_T$,}
    \end{alignat*}
    satisfying the boundary conditions \eqref{eq:bc}, with $R_1$ and $R_2$ non-negative and satisfying the bounds \eqref{eq:hp_FG}. 
    Then, all the results below can be proved also in this more general setting. 
    For more details on general Carleman estimates we refer the reader to \cite{isakov2006inverse, yamamoto, vessellabook}.

    Moreover, we observe that the decomposition $b\Delta u$ in \eqref{eq:v} is not unique because $G$ can contain terms of the form $\Delta u$, due to hypothesis \eqref{eq:hp_FG}. However, estimate \eqref{eq:carleman} holds independently of the decomposition, up to changing the constant $C$.
\end{remark}

\begin{proof}
	We start with a change of variables, by setting
	\begin{equation}
		\label{eq:changevar}
		y(x,t) := u(x,t) e^{s \phil(t)}, \quad z(x,t) := v(x,t) e^{s \phil(t)}, \quad \hbox{for any $(x,t) \in Q_T$,}
	\end{equation}
	for a given parameter $s > 0$, to be chosen large enough later on.
	Then, since $\phil$ depends only on time, we can rewrite the system \eqref{eq:u}--\eqref{eq:bc} in terms of the new variables $(y,z)$ as
	\begin{alignat}{2}
	& \partial_t y - s (\phil'(t)) \, y + \Delta^2 y + a \, \Delta z = F\!\left(x,t,y e^{- s \phil(t)},z e^{- s \phil(t)}\right) \, e^{s \phil(t)} \quad && \hbox{a.e.~in $Q_T$,} \label{eq:y} \\
	& \partial_t z  - s (\phil'(t)) \, z - \Delta z + b \, \Delta y = G\!\left(x,t,y e^{- s \phil(t)},z e^{- s \phil(t)}\right) \, e^{s \phil(t)} \quad && \hbox{a.e.~in $Q_T$,} \label{eq:z}
	\end{alignat}
	endowed with the homogeneous Neumann boundary conditions
	\begin{equation}
		\label{eq:bcyz}
		\partial_{\n} y = \partial_{\n} \Delta y = \partial_{\n} z = 0 \quad \hbox{a.e.~on $\Sigma_T$.} 
	\end{equation}
	We follow the approach used in \cite[Section 9]{yamamoto2009carleman} and \cite[Chapter 7]{yamamoto} for a similar Carleman estimate on a single second-order parabolic equation. 
	However, in our case, we also have to treat the cross-diffusion terms, which require some additional care. Indeed, we start by analysing the more general case in which both cross-diffusion coefficients $a$ and $b$ are strictly positive.

	\textsc{First Estimate.} 
	As \eqref{eq:y} holds in the strong sense, we start by taking the $L^2(Q_T)$-norm on both sides of the equation, obtaining 
	\[
		\norm{F e^{s \phil}}^2_{\Lqt2} = \norm{\partial_t y + (- s \phil' y + \Delta^2 y + a \, \Delta z)}^2_{\Lqt2}.
	\]
	Then, by expanding the squared norm on the right-hand side and by neglecting the last non-negative term, we infer that
	\begin{align*}
		\norm{F e^{s \phil}}^2_{\Lqt2} & \ge \norm{\partial_t y}^2_{\Lqt2} + 2(\partial_t y,(- s \phil' y + \Delta^2 y + a \, \Delta z))_{\Lqt2} \\
		& \ge \norm{\partial_t y}^2_{\Lqt2} 
		- 2 \int_0^T \int_\Omega s \phil'(t) y \, \partial_t y \, \de x \, \de t \\
		& \quad + 2 \int_0^T \int_\Omega \partial_t y \, \Delta^2 y \, \de x \, \de t 
		+ 2 a \int_0^T \int_\Omega \partial_t y \, \Delta z \, \de x \, \de t \\
		& \ge \norm{\partial_t y}^2_{\Lqt2} + I_1 + I_2 + 2 a \int_0^T \int_\Omega \partial_t y \, \Delta z \, \de x \, \de t.
	\end{align*}
	Recalling that $y \in \HT 1 {\Lx2} \cap C^0([0,T]; \Hx2) \cap \LT 2 {\Hx4}$, we can use standard identities and integration by parts to deduce that 
	\begin{align*}
		I_1 & = - 2s \int_0^T \int_\Omega \phil'(t) y \, \partial_t y \, \de x \, \de t = - s \int_0^T \phil'(t) \ddt \left( \int_\Omega \abs{y}^2 \, \de x \right) \, \de t \\
		& = s \int_0^T \int_\Omega \phil''(t) \abs{y}^2 \, \de x \, \de t + s \int_\Omega \left[ \phil'(t) \abs{y}^2 \right]^0_T \, \de x \\
		& = s \int_0^T \int_\Omega \phil''(t) \abs{y}^2 \, \de x \, \de t + B_1, \\
		I_2 & = 2 \int_0^T \int_\Omega \partial_t y \, \Delta^2 y \, \de x \, \de t 
		= \int_0^T \ddt \left( \int_\Omega \abs{\Delta y}^2 \, \de x \right) \, \de t \\
		& = \int_\Omega \left[ \abs{\Delta y}^2 \right]^T_0 \, \de x = B_2, 
	\end{align*} 
	where $B_1$ and $B_2$ are the boundary terms defined in Theorem \ref{thm:carleman}, up to the change of variables \eqref{eq:changevar}.
	Thus, we have shown that 
	\begin{equation}
		\label{eq:carleman:est1}
		\norm{F e^{s \phil}}^2_{\Lqt2} \ge \norm{\partial_t y}^2_{\Lqt2} + s \int_{Q_T} \phil''(t) \abs{y}^2 \, \de x \, \de t + 2a \int_{Q_T} \partial_t y \, \Delta z \, \de x \, \de t + B_1 + B_2.
	\end{equation}
	Next, we proceed similarly with \eqref{eq:z}. 
	Indeed, by taking the $L^2(Q_T)$-norm on both sides and by arguing as above, we obtain that 
	\begin{align*}
		\norm{G e^{s \phil}}^2_{\Lqt2} & \ge \norm{\partial_t z}^2_{\Lqt2} + 2(\partial_t z,(- s \phil' z - \Delta z + b \, \Delta y))_{\Lqt2} \\
		& \ge \norm{\partial_t z}^2_{\Lqt2} 
		- 2 \int_0^T \int_\Omega s \phil'(t) z \, \partial_t z \, \de x \, \de t \\
		& \quad - 2 \int_0^T \int_\Omega \partial_t z \, \Delta z \, \de x \, \de t 
		+ 2 b \int_0^T \int_\Omega \partial_t z \, \Delta y \, \de x \, \de t \\
		& \ge \norm{\partial_t z}^2_{\Lqt2} + I_3 + I_4 + 2 b \int_0^T \int_\Omega \partial_t z \, \Delta y \, \de x \, \de t.
	\end{align*} 
	Then, by recalling that $z \in \HT 1 {\Lx2} \cap C^0([0,T]; \Hx1) \cap \LT 2 {\Hx2}$, we can use again standard identities and integration by parts to deduce that 
	\begin{align*}
		I_3 & = - 2s \int_0^T \int_\Omega \phil'(t) z \, \partial_t z \, \de x \, \de t = - s \int_0^T \phil'(t) \ddt \left( \int_\Omega \abs{z}^2 \, \de x \right) \, \de t \\
		& = s \int_0^T \int_\Omega \phil''(t) \abs{z}^2 \, \de x \, \de t + s \int_\Omega \left[ \phil'(t) \abs{z}^2 \right]^0_T \, \de x \\
		& = s \int_0^T \int_\Omega \phil''(t) \abs{z}^2 \, \de x \, \de t + B_3, \\
		I_4 & = - 2 \int_0^T \int_\Omega \partial_t z \, \Delta z \, \de x \, \de t 
		= \int_0^T \ddt \left( \int_\Omega \abs{\nabla z}^2 \, \de x \right) \, \de t \\
		& = \int_\Omega \left[ \abs{\nabla z}^2 \right]^T_0 \, \de x = B_4, 
	\end{align*} 
	where $B_3$ and $B_4$ are the boundary terms defined in Theorem \ref{thm:carleman}, up to the change of variables \eqref{eq:changevar}. 
	Thus, we deduce that 
	\begin{equation}
		\label{eq:carleman:est2}
		\norm{G e^{s \phil}}^2_{\Lqt2} \ge \norm{\partial_t z}^2_{\Lqt2} + s \int_{Q_T} \phil''(t) \abs{z}^2 \, \de x \, \de t + 2b \int_{Q_T} \partial_t z \, \Delta y \, \de x \, \de t + B_3 + B_4.
	\end{equation}
	Next, since we are initially assuming that $a,b > 0$, we multiply \eqref{eq:carleman:est1} by $1/a$ and \eqref{eq:carleman:est2} by $1/b$, and we sum the resulting inequalities to obtain
	\begin{align*}
		& \frac{1}{a} \norm{\partial_t y}^2_{\Lqt2} +  \frac{1}{b} \norm{\partial_t z}^2_{\Lqt2} 
		+ \int_{Q_T} s \phil''(t) \left( \frac{1}{a} \abs{y}^2 + \frac{1}{b} \abs{z}^2 \right) \, \de x \, \de t \\
		& \qquad + 2 \int_{Q_T} \left( \partial_t y \Delta z + \partial_t z \Delta y \right) \, \de x \, \de t 
		+ \frac{1}{a} (B_1 + B_2) + \frac{1}{b} (B_3 + B_4) \\
		& \quad \le \frac{1}{a} \norm{F e^{s \phil}}^2_{\Lqt2} + \frac{1}{b} \norm{G e^{s \phil}}^2_{\Lqt2}.
	\end{align*}
	At this point, we observe that, since $y,z \in \HT 1 {\Lx2} \cap C^0([0,T]; \Hx1) \cap \LT 2 {\Hx2}$ and satisfy \eqref{eq:bcyz}, the following identity holds by integration by parts:
	\[
		2 \int_0^T \int_\Omega \left( \partial_t y \Delta z + \partial_t z \Delta y \right) \, \de x \, \de t 
		= 2 \int_0^T \ddt \left( \int_\Omega \nabla y \cdot \nabla z \, \de x \right) \, \de t 
		= 2 \int_\Omega \left[ \nabla y \cdot \nabla z \right]^T_0 \, \de x = B_5,
	\]
	where $B_5$ is the boundary term defined in Theorem \ref{thm:carleman}, up to the change of variables \eqref{eq:changevar}.
	Hence, by factoring $\min\{a^{-1},b^{-1}\}$ on the left-hand side and multiplying the above inequality by it, we infer that 
	\begin{equation*}
		\begin{split}
			& \norm{\partial_t y}^2_{\Lqt2} +  \norm{\partial_t z}^2_{\Lqt2} 
			+ \int_{Q_T} s \phil''(t) \left( \abs{y}^2 + \abs{z}^2 \right) \, \de x \, \de t \\
			& \quad \le C \left( \norm{F e^{s \phil}}^2_{\Lqt2} + \norm{G e^{s \phil}}^2_{\Lqt2} \right)
			+ C \sum_{k=1}^5 \abs{B_k},
		\end{split}
	\end{equation*}
	for some constant $C > 0$ depending only on $a$ and $b$.
	Moreover, by using the second assumption in \eqref{eq:hp_weight2} and by factoring $\min\{1,L_2\}$, we finally conclude that
	\begin{equation}
		\label{eq:carleman:first}
		\begin{split}
			& \norm{\partial_t y}^2_{\Lqt2} +  \norm{\partial_t z}^2_{\Lqt2} 
			+ \int_{Q_T} s \lambda^2 \psil(t) \left( \abs{y}^2 + \abs{z}^2 \right) \, \de x \, \de t \\
			& \quad \le C \left( \norm{F e^{s \phil}}^2_{\Lqt2} + \norm{G e^{s \phil}}^2_{\Lqt2} \right)
			+ C \sum_{k=1}^5 \abs{B_k},
		\end{split}
	\end{equation}
	for some constant $C>0$ now depending only on $a$, $b$, and possibly on $T$.

	\textsc{Second Estimate.}
	We now test \eqref{eq:y} by $y$ and integrate over $Q_T$ to obtain that 
	\begin{align*}
		& \mezzo \int_0^T \ddt \left( \int_\Omega \abs{y}^2 \, \de x \right) \, \de t 
		- \int_{Q_T} s \phil'(t) \abs{y}^2 \, \de x \, \de t \\
		& \quad + \int_{Q_T} \abs{\Delta y}^2 \, \de x \, \de t 
		+ a \int_{Q_T} z \Delta y \, \de x \, \de t 
		= \int_{Q_T} F e^{s \phil} y \, \de x \, \de t, 
	\end{align*}
	where we integrated by parts two times in the third and fourth term. 
	Then, by multiplying everything by $\lambda > 0$, rearranging the terms and applying Young's inequality, we infer that 
	\begin{align*}
		& \lambda \int_{Q_T} \abs{\Delta y}^2 \, \de x \, \de t \\
		& \quad = - \frac{\lambda}{2} \int_\Omega \left[ \abs{y}^2 \right]^T_0 \, \de x 
		+ s \int_{Q_T} \lambda \phil'(t) \abs{y}^2 \, \de x \, \de t
		- a \int_{Q_T} \lambda z \Delta y \, \de x \, \de t 
		+ \int_{Q_T} \lambda F e^{s \phil} y \, \de x \, \de t \\
		& \quad \le - \frac{\lambda}{2} \int_\Omega \left[ \abs{y}^2 \right]^T_0 \, \de x 
		+ s \int_{Q_T} \lambda \phil'(t) \abs{y}^2 \, \de x \, \de t
		+ \frac{\lambda}{2} \int_{Q_T} \abs{\Delta y}^2 \, \de x \, \de t \\
		& \qquad + \frac{a^2}{2} \int_{Q_T} \lambda \abs{z}^2 \, \de x \, \de t
		+ \mezzo \int_{Q_T} F^2 e^{2s \phil} \, \de x \, \de t + \mezzo \int_{Q_T} \lambda^2 \abs{y}^2 \, \de x \, \de t. 
	\end{align*}
	By recalling the definition of the boundary term $B_6$, up to the change of variables \eqref{eq:changevar}, and by employing the first hypothesis in \eqref{eq:hp_weight2}, the inequality above yields 
	\begin{align*}
		& \frac{\lambda}{2} \int_{Q_T} \abs{\Delta y}^2 \, \de x \, \de t \\
		& \quad \le \abs{B_6} + \int_{Q_T} \left(s L_1 \lambda^2 \psil(t) + \frac{\lambda^2}{2} \right) \abs{y}^2 \, \de x \, \de t 
		+ \frac{a^2}{2} \int_{Q_T} \lambda \abs{z}^2 \, \de x \, \de t
		+ \mezzo \norm{F e^{s \phil}}^2_{\Lqt2}.
	\end{align*}
	Finally, by choosing $\lambda > 0$ and $s > 0$ large enough such that 
	\begin{equation}
		\label{eq:sl_large}
		s \lambda \psil(t) \ge 1 \quad \hbox{for any $t \in [0,T]$},
	\end{equation}
	we can group the second and third term on the right-hand side to deduce that 
	\begin{equation}
		\label{eq:carleman:est3}
		\lambda \int_{Q_T} \abs{\Delta y}^2 \, \de x \, \de t \le C \int_{Q_T} s \lambda^2 \psil(t) \left( \abs{y}^2 + \abs{z}^2 \right) \, \de x \, \de t 
		+ \norm{F e^{s \phil}}^2_{\Lqt2}
		+ 2 \abs{B_6},
	\end{equation}
	for some constant $C > 0$, depending only on $a$ and possibly on $T$.
	Next, we proceed similarly on equation \eqref{eq:z} by testing it by $z$ and integrating over $Q_T$. 
	Indeed, by also performing some integrations by parts, we get that
	\begin{align*}
		& \mezzo \int_0^T \ddt \left( \int_\Omega \abs{z}^2 \, \de x \right) \, \de t 
		- \int_{Q_T} s \phil'(t) \abs{z}^2 \, \de x \, \de t \\
		& \quad + \int_{Q_T} \abs{\nabla z}^2 \, \de x \, \de t 
		+ b \int_{Q_T} z \Delta y \, \de x \, \de t 
		= \int_{Q_T} G e^{s \phil} z \, \de x \, \de t, 
	\end{align*}
	Then, by multiplying everything by $\lambda > 0$, rearranging the terms and applying Young's inequality, we deduce that
	\begin{align*}
		& \lambda \int_{Q_T} \abs{\nabla z}^2 \, \de x \, \de t \\
		& \quad = - \frac{\lambda}{2} \int_\Omega \left[ \abs{z}^2 \right]^T_0 \, \de x 
		+ s \int_{Q_T} \lambda \phil'(t) \abs{z}^2 \, \de x \, \de t
		- b \int_{Q_T} \lambda z \Delta y \, \de x \, \de t 
		+ \int_{Q_T} \lambda G e^{s \phil} z \, \de x \, \de t \\
		& \quad \le - \frac{\lambda}{2} \int_\Omega \left[ \abs{z}^2 \right]^T_0 \, \de x 
		+ s \int_{Q_T} \lambda \phil'(t) \abs{z}^2 \, \de x \, \de t
		+ \lambda \int_{Q_T} \abs{\Delta y}^2 \, \de x \, \de t \\
		& \qquad + \frac{b^2}{4} \int_{Q_T} \lambda \abs{z}^2 \, \de x \, \de t
		+ \mezzo \int_{Q_T} G^2 e^{2s \phil} \, \de x \, \de t + \mezzo \int_{Q_T} \lambda^2 \abs{z}^2 \, \de x \, \de t. 
	\end{align*}
	As we did above, we now incorporate the definition of the boundary term $B_7$, up to the change of variables \eqref{eq:changevar}, and we employ hypothesis \eqref{eq:hp_weight2} to obtain 
	\begin{align*}
		& \lambda \int_{Q_T} \abs{\nabla z}^2 \, \de x \, \de t \\
		& \quad \le \abs{B_7} 
		+ \int_{Q_T} \left( sL_1 \lambda^2 \psil(t) + \frac{b^2}{4} \lambda + \frac{\lambda^2}{2} \right) \abs{z}^2 \, \de x \, \de t 
		+ \lambda \int_{Q_T} \abs{\Delta y}^2 \, \de x \, \de t 
		+ \mezzo \norm{G e^{s \phil}}^2_{\Lqt2}.
	\end{align*}
	Next, by choosing again $s > 0$ and $\lambda > 0$ large enough so that \eqref{eq:sl_large} holds, we infer that
	\begin{equation}
		\label{eq:carleman:est4}
		\begin{split}
		& \lambda \int_{Q_T} \abs{\nabla z}^2 \, \de x \, \de t 
		- \lambda \int_{Q_T} \abs{\Delta y}^2 \, \de x \, \de t \\
		& \quad \le C \int_{Q_T} s \lambda^2 \psil(t) \abs{z}^2 \, \de x \, \de t 
		+ \mezzo \norm{G e^{s \phil}}^2_{\Lqt2} 
		+ \abs{B_7}, 
		\end{split}
	\end{equation}
	for some constant $C > 0$, depending only on $b$ and possibly on $T$.
	Therefore, by summing up \eqref{eq:carleman:est3} and \eqref{eq:carleman:est4} multiplied by $1/2$, we get that
	\begin{align*}
		& \frac{\lambda}{2} \int_{Q_T} \left( \abs{\Delta y}^2 + \abs{\nabla z}^2 \right) \, \de x \, \de t \le C \int_{Q_T} s \lambda^2 \psil(t) \left( \abs{y}^2 + \abs{z}^2 \right) \, \de x \, \de t \\
		& \qquad + C \left( \norm{F e^{s \phil}}^2_{\Lqt2} + \norm{G e^{s \phil}}^2_{\Lqt2} \right) 
		+ C \left( \abs{B_6} + \abs{B_7} \right),
	\end{align*}
	for some constant $C > 0$, depending only on $a$, $b$ and possibly on $T$.
	Finally, by using \eqref{eq:carleman:first} to estimate the first term on the right-hand side, we conclude that 
	\begin{equation}
		\label{eq:carleman:second}
		\begin{split}
			& \int_{Q_T} \lambda \left( \abs{\Delta y}^2 + \abs{\nabla z}^2 \right) \, \de x \, \de t \\
			& \quad \le C \left( \norm{F e^{s \phil}}^2_{\Lqt2} + \norm{G e^{s \phil}}^2_{\Lqt2} \right)
			+ C \sum_{k=1}^7 \abs{B_k}.
		\end{split}
	\end{equation}

	\textsc{Third Estimate.} 
	To deduce \eqref{eq:carleman}, we are just left to estimate the two terms containing the time-derivatives.
	Indeed, since $y = u e^{s \phil}$ by \eqref{eq:changevar}, we have that 
	\[
		\partial_t y = \partial_t u \, e^{s \phil} + s \phil' u e^{s \phil} \quad \hbox{a.e. in $Q_T$.}
	\]
	Then, by taking the squared absolute value on both sides, we get
	\begin{align*}
		\abs{\partial_t y}^2 & = e^{2s \phil} \abs{\partial_t u + s \phil' u}^2 = e^{2s \phil} \left( \abs{\partial_t u}^2 + 2s \phil' \partial_t u \, u + (s \phil')^2 \abs{u}^2 \right) \\
		& \ge e^{2s \phil} \left( \abs{\partial_t u}^2 - 2s \phil' \abs{\partial_t u} \, \abs{u} + (s \phil')^2 \abs{u}^2 \right)
	\end{align*}
	To treat the middle term, we employ Young's inequality 
	$2ab \le \eps a^2 + (1/\eps) b^2$ with $a=\abs{\partial_t u}$, $b = s \phil' \abs{u}$ and $\eps = 1/2$, to obtain
	\begin{align*}
		\abs{\partial_t y}^2 & \ge e^{2s \phil} \left( \abs{\partial_t u}^2 - \frac12 \abs{\partial_t u}^2 - 2 (s \phil')^2 \abs{u}^2 + (s \phil')^2 \abs{u}^2 \right) \\
		& = e^{2s \phil} \left( \frac12 \abs{\partial_t u}^2 - (s \phil')^2 \abs{u}^2 \right).
	\end{align*} 
	Next, by reverting the inequality, multiplying both sides by $1/(s \psil(t)) > 0$, integrating over $Q_T$ and using hypothesis \eqref{eq:hp_weight2}, we infer that 
	\begin{align*}
		\int_{Q_T} \frac{1}{s \psil(t)} \abs{\partial_t u}^2 e^{2s \phil} \, \de x \, \de t 
		& \le 2 \int_{Q_T} \frac{s^2 \phil'(t)^2}{s \psil(t)} \abs{u}^2 e^{2s \phil} \, \de x \, \de t 
		+ 2 \int_{Q_T} \frac{1}{s \psil(t)} \abs{\partial_t y}^2 \, \de x \, \de t \\
		& \le 2L_1^2 \int_{Q_T} s \lambda^2 \psil(t) \abs{u}^2 e^{2s \phil} \, \de x \, \de t 
		+ 2 \int_{Q_T} \frac{1}{s \psil(t)} \abs{\partial_t y}^2 \, \de x \, \de t \\
		& \le C \int_{Q_T} s \lambda^2 \psil(t) \abs{y}^2 \, \de x \, \de t + C \int_{Q_T} \abs{\partial_t y}^2 \, \de x \, \de t \\
		& \le C \left( \norm{F e^{s \phil}}^2_{\Lqt2} + \norm{G e^{s \phil}}^2_{\Lqt2} \right)
		+ C \sum_{k=1}^5 \abs{B_k},
	\end{align*}
	where in the second to last line we used \eqref{eq:changevar} and the fact that $s \psil(t) > 1$ for $s$ large enough, whereas in the last line we used \eqref{eq:carleman:first}. 
	By arguing in a completely similar fashion also for $z = v e^{s \phil}$, we finally conclude that 
	\begin{equation}
		\label{eq:carleman:third}
		\begin{split}
			& \int_{Q_T} \frac{1}{s \psil(t)} \left( \abs{\partial_t u}^2 + \abs{\partial_t v}^2 \right) e^{2s \phil} \, \de x \, \de t \\
			& \quad \le C \left( \norm{F e^{s \phil}}^2_{\Lqt2} + \norm{G e^{s \phil}}^2_{\Lqt2} \right)
			+ C \sum_{k=1}^5 \abs{B_k}.
		\end{split}
	\end{equation}
	Hence, by putting together \eqref{eq:carleman:first}, \eqref{eq:carleman:second} and \eqref{eq:carleman:third}, up to the change of variables \eqref{eq:changevar}, we deduce the Carleman estimate \eqref{eq:carleman}.

    We now briefly discuss how to extend the result to the additional cases in which one or both cross-diffusion terms degenerate.

    \textbf{Case $a,b = 0$.} The proof can be easily extended to this case, due to the absence of the cross-diffusion terms. Indeed, by repeating the same process, one can prove \eqref{eq:carleman:first}, \eqref{eq:carleman:second} and \eqref{eq:carleman:third}, simply without the appearance of the boundary term $B_5$. 

    \textbf{Case $a = 0$, $b > 0$.} In this case, we observe that hypothesis \eqref{eq:hp_FG} allows us to include the cross-diffusion term $b \Delta u$ in \eqref{eq:v} on the right-hand side $G$. 
    Thus, we consider the system:
    \begin{alignat*}{2}
	   & \partial_t u + \Delta^2 u = F(x,t,u,v) \quad && \hbox{in $Q_T$,} \\
	   & \partial_t v - \Delta v = \widetilde{G}(x,t,u,v) \quad && \hbox{in $Q_T$,}
    \end{alignat*}
    where 
    \[
        \widetilde{G}(x,t,u,v) = G(x,t,u,v) + b \Delta u.
    \]
    In this way, the new source term $\widetilde{G}$ still satisfies the main hypothesis \eqref{eq:hp_FG}, up to possibly changing $K(t) \in \Lt\infty$, now depending also on $b$. 
    Then, the Carleman estimate \eqref{eq:carleman} follows by the general argument above with $a = 0$ and $b = 0$, without the boundary term $B_5$.

    \textbf{Case $a > 0$, $b = 0$.} In this situation, we cannot proceed as in the previous case, since we only have estimates in $\Hx1$ for $v$. Thus, we modify \eqref{eq:u}--\eqref{eq:v} by adding a term $+ \Delta u$ on both sides of \eqref{eq:v} instead, namely
    \begin{alignat*}{2}
	   & \partial_t u + \Delta^2 u + a \, \Delta v = F(x,t,u,v) \quad && \hbox{in $Q_T$,} \\
	   & \partial_t v - \Delta v + \Delta u = \widetilde{G}(x,t,u,v) \quad && \hbox{in $Q_T$,}
    \end{alignat*}
    where 
    \[
        \widetilde{G}(x,t,u,v) = G(x,t,u,v) + \Delta u.
    \]
    In this way, the new source term $\widetilde{G}$ similarly satisfies the main hypothesis \eqref{eq:hp_FG}, up to possibly changing $K(t) \in \Lt\infty$. 
    Thus, the Carleman estimate \eqref{eq:carleman} follows by the general argument above with $a > 0$ and $b = 1$.
\end{proof}

\begin{remark}
    Theorem \ref{thm:carleman} states that the Carleman estimate \eqref{eq:carleman} holds for $\lambda > \lambda_0$ and $s > s_0$ large enough. 
    However, the only requirements on these parameters inside the proof above is the validity of \eqref{eq:sl_large}, namely that $s \lambda \psil(t) \ge 1$ for any $t \in [0,T]$. 
    Then, with both choices of exponential weights and polynomial weights, this simply holds true for $\lambda \ge 1$ and $s \ge 1$.
\end{remark}

\section{Stability results}
\label{sec:stability}

In this section, we apply the Carleman estimate proved in Theorem \ref{thm:carleman} to deduce some conditional stability results for the backward inverse problem associated to the system \eqref{eq:u}--\eqref{eq:bc}.
Our first result is a conditional H\"older estimate for any time $0 < t_0 < T$.

\begin{theorem}
	\label{thm:holder_stab}
	Let $(u,v)$ be a strong solution to \eqref{eq:u}--\eqref{eq:bc} satisfying \eqref{eq:hp_reg}, and assume that \eqref{eq:hp_FG} holds. 
	Furthermore, let $M > 0$ be a positive constant such that \begin{equation}
		\label{eq:hp_cond_holder}
		\norm{(u(\cdot, 0), v(\cdot, 0))}_{\Hx2 \times \Hx1} \le M.
	\end{equation}
	Then, for any $\lambda > 0$ large enough there exists a constant $C_1 > 0$, depending only on $\lambda$, $T$, and the parameters of the system, such that for any $t_0 \in (0,T)$ the following H\"older stability estimate holds:
	\begin{equation}
		\label{eq:holder_stab}
		\norm{(u(\cdot, t_0), v(\cdot, t_0))}_{\Lx2 \times \Lx2} \le C_1 M^{1 - \theta(t_0)} \norm{(u(\cdot, T), v(\cdot, T))}_{\Hx2 \times \Hx1}^{\theta(t_0)},
	\end{equation}
	if $\norm{(u(\cdot, T), v(\cdot, T))}_{\Hx2 \times \Hx1}$ is sufficiently small, where 
	\begin{equation}
		\label{eq:theta_tzero}
		\theta(t_0) = \frac{e^{\lambda t_0} - 1}{3e^{\lambda T} - 2 e^{\lambda t_0} - 1} \in (0,1).
	\end{equation}
\end{theorem}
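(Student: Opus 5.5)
The plan is to apply Theorem~\ref{thm:carleman} with the exponential weights $\phil(t)=\psil(t)=e^{\lambda t}$, absorb the source terms using \eqref{eq:hp_FG}, bound the boundary terms by $M$ and by the final datum, recover the pointwise value at $t_0$, and then optimise in $s$. Throughout I write $D:=\norm{(u(\cdot,T),v(\cdot,T))}_{\Hx2\times\Hx1}$.

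\emph{Step 1: absorbing the sources.} By \eqref{eq:hp_FG}, the source integral on the right-hand side of \eqref{eq:carleman} is bounded by $\norm{K}_{\Lt\infty}\int_0^T(\norm{u}^2_{\Hx2}+\norm{v}^2_{\Hx1})e^{2se^{\lambda t}}\,\de t$. Elliptic regularity for the Neumann Laplacian gives $\norm{u}^2_{\Hx2}\le C(\norm{\Delta u}^2_{\Lx2}+\norm{u}^2_{\Lx2})$, since $\partial_{\n}u=0$. On the left-hand side, $\lambda(\abs{\Delta u}^2+\abs{\nabla v}^2)+s\lambda^2e^{\lambda t}(\abs u^2+\abs v^2)$ then dominates $\lambda(\norm{u}^2_{\Hx2}+\norm{v}^2_{\Hx1})/C$ for $s\ge1$. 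Fixing $\lambda>\lambda_0$ with $\lambda\ge 2C\norm{K}_{\Lt\infty}$, the source term is absorbed. This is exactly why $\lambda$, and not only $s$, must be large, and it explains the dependence of $C_1$ on $\lambda$. What remains is the left-hand side bounded by $C\sum_k\abs{B_k}$.

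\emph{Step 2: the boundary terms.} Every $B_k$ is a difference of a value at $t=T$ and a value at $t=0$, which I bound separately.
\begin{itemize}
\item The contributions at $t=T$ involve only $u(T)$ and $v(T)$ in $\Hx2\times\Hx1$. They are bounded by $Cs\lambda e^{\lambda T}e^{2se^{\lambda T}}D^2$; for $B_5$ I use $\abs{\nabla u\cdot\nabla v}\le\abs{\nabla u}^2+\abs{\nabla v}^2$.
\item The contributions at $t=0$ are bounded via \eqref{eq:hp_cond_holder} by $Cs\lambda e^{2s}M^2$.
\end{itemize}

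\emph{Step 3: the pointwise value at $t_0$.} The left-hand side of \eqref{eq:carleman} only controls time integrals. Rather than the $\partial_tu$ term with the weight $1/(s\psil)$, I go back to the intermediate estimate \eqref{eq:carleman:first} in the variables $y=ue^{s\phil}$, $z=ve^{s\phil}$. After Step 1, it gives $\norm{\partial_ty}^2_{\Lqt2}+\norm{y}^2_{\Lqt2}\le C\sum\abs{B_k}$, and the same for $z$. The one-dimensional Sobolev inequality on $(t_0,T)$ then gives
\[
\norm{y(t_0)}^2_{\Lx2}\le C_{T-t_0}\int_{t_0}^T\left(\norm{y}^2_{\Lx2}+\norm{\partial_ty}^2_{\Lx2}\right)\de t .
\]
To keep the constant uniform as $t_0\to T$, I would use the interval $(t_0-\delta,t_0)$ instead, or integrate over all of $(0,T)$. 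The weight $e^{2se^{\lambda t}}$ grows in $t$, so I take care that $e^{2se^{\lambda t_0}}$ is the smallest weight that actually enters. This yields
\[
\norm{(u,v)(t_0)}^2_{\Lx2}\le Cs\left(e^{2s(e^{\lambda T}-e^{\lambda t_0})}D^2+e^{-2s(e^{\lambda t_0}-1)}M^2\right).
\]

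\emph{Step 4: choosing $s$.} I absorb the polynomial factor $s$ into the exponentials, at the cost of enlarging the exponent constants; for instance $s\le e^{s(e^{\lambda T}-1)}$ for $\lambda$ large. I then equate the two terms, i.e.\ choose $s$ with $e^{2s\,\alpha}=M^2/D^2$ for the resulting combined exponent $\alpha$. With that bookkeeping the two exponents become $3e^{\lambda T}-e^{\lambda t_0}-2$ (which equals $2(e^{\lambda T}-1)+(e^{\lambda T}-e^{\lambda t_0})$) for the $D^2$ term and $e^{\lambda t_0}-1$ for the $M^2$ term, whose sum is $3e^{\lambda T}-2e^{\lambda t_0}-1$. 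This produces exactly $\theta(t_0)$ in \eqref{eq:theta_tzero}.

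The choice of $s$ requires $s>s_0$, which holds precisely when $D$ is small relative to $M$; this is the smallness hypothesis in the statement. Cruder bookkeeping only lowers $\theta$, and since $D<M$ the quantity $M^{1-\theta}D^{\theta}$ decreases as $\theta$ grows. Hence any exponent at least $\theta(t_0)$ implies \eqref{eq:holder_stab}.

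The main obstacle I expect is Step 3. The trace of $y$ at $t_0$ has to be controlled by the weighted integrals without losing the exponential factor $e^{2se^{\lambda t_0}}$. Tracking the factors $s$, $\lambda$ and the boundary contributions so as to land exactly on the stated $\theta(t_0)$ is likewise the delicate part of the argument.
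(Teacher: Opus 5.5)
Steps 1--3 of your proposal follow the paper's proof closely. You use exponential weights, absorb the sources by taking $\lambda$ large, bound $\sum_k\abs{B_k}\le C s\,(M^2e^{2s}+D^2e^{2se^{\lambda T}})$, and recover the value at $t_0$. Your explicit use of Neumann elliptic regularity in Step 1 is cleaner than the paper's. Strictly, absorbing the sources needs \eqref{eq:carleman:first} together with \eqref{eq:carleman:second}, which is exactly what the proof of Theorem~\ref{thm:carleman} supplies. Your trace argument for $y=ue^{s\phil}$ is equivalent to the paper's fundamental-theorem-of-calculus step on $\int_\Omega(\abs{u}^2+\abs{v}^2)e^{2s\phil}\,\de x$ over $(t_0,T)$. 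Because the weight is built into $y(t_0)$, there is no ``smallest weight'' issue. You arrive at the same inequality \eqref{eq:holdstab:est4}.

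The gap is in Step 4, which as written does not produce the stated exponent.
\begin{itemize}
\item \textbf{Arithmetic error.} Your two exponents, $3e^{\lambda T}-e^{\lambda t_0}-2$ and $e^{\lambda t_0}-1$, sum to $3e^{\lambda T}-3$, not to $3e^{\lambda T}-2e^{\lambda t_0}-1$.
\item \textbf{Consequence.} Balancing them gives $\theta'=(e^{\lambda t_0}-1)/(3e^{\lambda T}-3)$, which is strictly smaller than $\theta(t_0)$ since $e^{\lambda t_0}>1$. By your own monotonicity remark, a smaller exponent gives a \emph{weaker} bound, so it does not imply \eqref{eq:holder_stab}.
\item \textbf{Wrong absorption on the $M^2$ term.} The uniform bound $s\le e^{s(e^{\lambda T}-1)}$ cannot be used there. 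It turns $sM^2e^{-2s(e^{\lambda t_0}-1)}$ into $M^2e^{s(e^{\lambda T}-2e^{\lambda t_0}+1)}$. This does not decay in $s$ whenever $e^{\lambda T}-1\ge 2(e^{\lambda t_0}-1)$, for instance for small $t_0$.
\end{itemize}

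The fix, as in the paper, is to absorb $s$ separately in each term with the $t_0$-dependent rates $q=e^{\lambda T}-e^{\lambda t_0}$ and $p=e^{\lambda t_0}-1$.
\begin{itemize}
\item For $s$ large, $s\le e^{sq}$ and $s\le e^{sp}$, so $\norm{(u,v)(t_0)}^2_{\Lx2\times\Lx2}\le C\,(D^2e^{3sq}+M^2e^{-sp})$.
\item Choosing $s_*=\frac{1}{3q+p}\log\frac{M^2}{D^2}$ gives exactly $\theta=p/(3q+p)=\theta(t_0)$, since $3q+p=3e^{\lambda T}-2e^{\lambda t_0}-1$.
\end{itemize}
Because $p$ or $q$ degenerates at the endpoints, the required size of $s_*$, and hence how small $D$ must be, depends on $t_0$. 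This is consistent with the ``sufficiently small'' clause in the statement.
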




\begin{proof}
	The proof is inspired by the one of \cite[Theorem 7.2.1]{yamamoto}. 
	However, here we propose an improved version which allows us to avoid an additional logarithmic factor in the stability estimate.
	
	\textsc{First Step.} We start by taking $\lambda > 0$ large enough so that the Carleman estimate \eqref{eq:carleman} holds with the exponential weights $\phil(t) = \psil(t) = e^{\lambda t}$, namely
	\begin{align*}
		& \int_{Q_T} \left\{ \frac{1}{s \psil(t)} \left( \abs{\partial_t u}^2 + \abs{\partial_t v}^2 \right) + \lambda \left( \abs{\Delta u}^2 + \abs{\nabla v}^2 \right) + s \lambda^2 \psil(t) \left( \abs{u}^2 + \abs{v}^2 \right) \right\} e^{2s\phil(t)} \, \de x \, \de t \\
		& \quad \le C \left( \int_{Q_T} \left( \abs{F(x,t,u,v)}^2 + \abs{G(x,t,u,v)}^2 \right) e^{2s\phil(t)} \, \de x \, \de t 
		+ \sum_{k=1}^7 \abs{B_k} \right).
	\end{align*}
	By hypothesis \eqref{eq:hp_FG}, we know that 
	\begin{align*}
        & \int_{Q_T} \left( \abs{F(x,t,u,v)}^2 + \abs{G(x,t,u,v)}^2 \right) e^{2s\phil(t)} \, \de x \, \de t \\ 
		& \quad \le \int_{Q_T} K(t) \left( \norm{u}^2_{\Hx2} + \norm{v}^2_{\Hx1} \right) e^{2s \phil(t)} \, \de x \, \de t, \\
		& \quad \le \int_{Q_T} \left\{ K_\infty \left( \abs{\Delta u}^2 + \abs{\nabla v}^2 \right) + K_\infty s \left( \abs{u}^2 + \abs{v}^2 \right) \right\} e^{2s \phil(t)} \, \de x \, \de t,
	\end{align*}
	for some constant $K_\infty := \norm{K}_{\Lt\infty} > 0$ independent of $\lambda$ and $s$.
	In the last line, we also used that $s > 1$.  
	Then, up to choosing $\lambda > 0$ even larger, we can absorb the source term on the left-hand side.  Indeed, we have that
	\begin{align*}
		& \int_{Q_T} \bigg\{ \frac{1}{s \psil(t)} \left( \abs{\partial_t u}^2 + \abs{\partial_t v}^2 \right) + (\lambda - K_\infty) \left( \abs{\Delta u}^2 + \abs{\nabla v}^2 \right) \\
		& \qquad \quad + s (\lambda^2 \psil(t) - K_\infty) \left( \abs{u}^2 + \abs{v}^2 \right) \bigg\} e^{2s\phil(t)} \, \de x \, \de t
		\le C \sum_{k=1}^7 \abs{B_k}.
	\end{align*}
	Now, recalling that $\psil(t) = e^{\lambda t} \ge 1$ is strictly increasing, we can choose $\lambda > 0$ such that $\lambda - K_\infty > 1$ and, then, factor out $1/\psil(T)$ to infer that 
	\begin{align*}
		& \frac{1}{\psil(T)} \int_{Q_T} \left\{ \frac{1}{s} \left( \abs{\partial_t u}^2 + \abs{\partial_t v}^2 \right) + \left( \abs{\Delta u}^2 + \abs{\nabla v}^2 \right) + s \left( \abs{u}^2 + \abs{v}^2 \right) \right\} e^{2s\phil(t)} \, \de x \, \de t \\
		& \quad \le \int_{Q_T} \bigg\{ \frac{1}{s \psil(t)} \left( \abs{\partial_t u}^2 + \abs{\partial_t v}^2 \right) + (\lambda - K_\infty) \left( \abs{\Delta u}^2 + \abs{\nabla v}^2 \right) \\
		& \qquad \qquad \quad + s (\lambda^2 \psil(t) - K_\infty) \left( \abs{u}^2 + \abs{v}^2 \right) \bigg\} e^{2s\phil(t)} \, \de x \, \de t
		\le C \sum_{k=1}^7 \abs{B_k}.
	\end{align*}
	Then, we conclude that 
	\begin{equation}
		\label{eq:holdstab:est1}
		\begin{split}
		& \int_{Q_T} \left\{ \frac{1}{s} \left( \abs{\partial_t u}^2 + \abs{\partial_t v}^2 \right) + \left( \abs{\Delta u}^2 + \abs{\nabla v}^2 \right) + s \left( \abs{u}^2 + \abs{v}^2 \right) \right\} e^{2s\phil(t)} \, \de x \, \de t \\
		& \quad \le C_\lambda \sum_{k=1}^7 \abs{B_k},
		\end{split}
	\end{equation}
	for a constant $C_{\lambda} > 0$ depending also on $\lambda$ and $T$.
	From this point onward, we consider $\lambda > 0$ fixed so that \eqref{eq:holdstab:est1} holds. 
	In this way, the only free parameter left to choose is $s > 1$. 
	Next, we observe that the boundary terms on the right-hand side can be bounded in the following way:
	\begin{align*}
		\sum_{k=1}^7 \abs{B_k} & = 
		s \lambda \int_\Omega \abs*{\left[ e^{\lambda t} \abs{u}^2 e^{2s\phil(t)} \right]^0_T } \, \de x
		+ \int_\Omega \abs*{ \left[ \abs{\Delta u}^2 e^{2s \phil(t)} \right]^T_0 } \, \de x \\
		& \quad + s \lambda \int_\Omega \abs*{ \left[ e^{\lambda t} \abs{v}^2 e^{2s\phil(t)} \right]^0_T } \, \de x
		+ \int_\Omega \abs*{ \left[ \abs{\nabla v}^2 e^{2s \phil(t)} \right]^T_0 } \, \de x \\
		& \quad + \abramo{2}\int_\Omega \abs*{ \left[ \abs{\nabla u \cdot \nabla v} e^{2s \phil(t)} \right]^T_0 } \, \de x 
		+ \frac{\lambda}{2} \int_\Omega \abs*{  \left[ \abs{u}^2 e^{2s\phil(t)} \right]^0_T } \, \de x \\
		& \quad + \frac{\lambda}{2} \int_\Omega \abs*{ \left[ \abs{v}^2 e^{2s\phil(t)} \right]^0_T } \, \de x \\
		& \le C_\lambda s \, \bigg\{ \left( \norm{u(\cdot, 0)}^2_{\Hx2} + \norm{v(\cdot,0)}^2_{\Hx1} \right) e^{2s \phil(0)} \\
		& \qquad \qquad + \left( \norm{u(\cdot,T)}^2_{\Hx2} + \norm{v(\cdot,T)}^2_{\Hx1} \right) e^{2s \phil(T)} \bigg\},
	\end{align*}
	where $C_\lambda > 0$ is again a constant depending on $\lambda$ and $T$.
	Hence, by neglecting the second term on the left-hand side of \eqref{eq:holdstab:est1}, we get that 
	\begin{equation}
		\label{eq:holdstab:est2}
		\begin{split}
			& \int_{Q_T} \left\{ \frac{1}{s} \left( \abs{\partial_t u}^2 + \abs{\partial_t v}^2 \right) + s \left( \abs{u}^2 + \abs{v}^2 \right) \right\} e^{2s\phil(t)} \, \de x \, \de t \\
			& \quad \le C_\lambda s \, \bigg\{ \left( \norm{u(\cdot, 0)}^2_{\Hx2} + \norm{v(\cdot, 0)}^2_{\Hx1} \right) e^{2s \phil(0)} \\
			& \qquad \qquad \quad + \left( \norm{u(\cdot, T)}^2_{\Hx2} + \norm{v(\cdot, T)}^2_{\Hx1} \right) e^{2s \phil(T)} \bigg\}.
		\end{split}
	\end{equation}

	\textsc{Second Step.} We now fix $t_0 \in (0,T)$. 
	Then, by the regularity $u, v \in \HT 1 {\Lx2}$, we can use the Fundamental Theorem of Calculus, together with the fact that $\phil(t) = e^{\lambda t}$ and Young's inequality, to infer that 
	\begin{align*}
		& \int_\Omega \left( \abs{u(x,t_0)}^2 + \abs{v(x,t_0)}^2 \right) e^{2s \phil(t_0)} \, \de x \\
		& \quad = \int_\Omega \left( \abs{u(x,T)}^2 + \abs{v(x,T)}^2 \right) e^{2s \phil(T)} \, \de x 
		- \int_{t_0}^T \ddt \left( \int_\Omega \left( \abs{u(x,t)}^2 + \abs{v(x,t)}^2 \right) e^{2s \phil(t)} \, \de x \right) \, \de t \\
		& \quad = \int_\Omega \left( \abs{u(x,T)}^2 + \abs{v(x,T)}^2 \right) e^{2s \phil(T)} \, \de x \\
		& \qquad - \int_{t_0}^T \int_\Omega \left( 2 u \,\partial_t u + 2s \phil'(t) \abs{u}^2 + 2 v \, \partial_t v + 2s \phil'(t) \abs{v}^2 \right) e^{2s \phil(t)} \, \de x \, \de t \\
		& \quad \le \int_\Omega \left( \abs{u(x,T)}^2 + \abs{v(x,T)}^2 \right) e^{2s \phil(T)} \, \de x \\
		& \qquad + 2 \lambda e^{\lambda T} \int_0^T \int_\Omega \left( \sqrt{s} \abs{u} \frac{1}{\sqrt{s}} \abs{\partial_t u} + s \abs{u}^2 + \sqrt{s} \abs{v} \frac{1}{\sqrt{s}} \abs{\partial_t v} + s \abs{v}^2 \right) e^{2s \phil(t)} \, \de x \, \de t \\
		& \quad \le \left( \norm{u(\cdot, T)}^2_{\Lx2} + \norm{v(\cdot, T)}^2_{\Lx2} \right) e^{2s \phil(T)} \\
		& \qquad + C_{\lambda, T} \int_{Q_T} \left\{ \frac{1}{s} \left( \abs{\partial_t u}^2 + \abs{\partial_t v}^2 \right) + s \left( \abs{u}^2 + \abs{v}^2 \right) \right\} e^{2s\phil(t)} \, \de x \, \de t.
	\end{align*}
	Therefore, by using \eqref{eq:holdstab:est2}, we deduce that 
	\begin{equation}
		\label{eq:holdstab:est3}
		\begin{split}
		& \left( \norm{u(\cdot, t_0)}^2_{\Lx2} + \norm{v(\cdot, t_0)}^2_{\Lx2} \right) e^{2s \phil(t_0)} \\
		& \quad \le C_\lambda s \, \bigg\{ \left( \norm{u(\cdot, 0)}^2_{\Hx2} + \norm{v(\cdot, 0)}^2_{\Hx1} \right) e^{2s \phil(0)} \\
		& \qquad \qquad \quad + \left( \norm{u(\cdot, T)}^2_{\Hx2} + \norm{v(\cdot, T)}^2_{\Hx1} \right) e^{2s \phil(T)} \bigg\}.
		\end{split}
	\end{equation}
    
	\textsc{Third Step.} 
	Starting from \eqref{eq:holdstab:est3}, we are now in position to deduce the H\"older stability estimate \eqref{eq:holder_stab}.
	In order to do this, we introduce the following compact notation:
	\[
		D := \norm{(u(\cdot,T), v(\cdot,T))}_{\Hx2 \times \Hx1}. 
	\]
	Thus, by recalling also the hypothesis \eqref{eq:hp_cond_holder}, we can rewrite \eqref{eq:holdstab:est3} as
	\begin{equation}
		\label{eq:holdstab:est4}
		\norm{(u(\cdot, t_0), v(\cdot, t_0))}^2_{\Lx2 \times \Lx2} \le C s \left( D^2 e^{2 s (\phil(T) - \phil(t_0))} + M^2 e^{-2s (\phil(t_0) - \phil(0))} \right),
	\end{equation}
	where $C > 0$ is a constant depending on $\lambda$, $T$ and the parameters of the system, but not on $s$.
	Our goal is now to choose $s \gg 1$ large enough to deduce \eqref{eq:holder_stab}, provided that $D$ is sufficiently small.
	To further simplify the notation, we also set
	\[
		p := \phil(t_0) - \phil(0) > 0, \quad q := \phil(T) - \phil(t_0) > 0, \quad p + q = \phil(T) - \phil(0) > 0,
	\]
	so that \eqref{eq:holdstab:est4} becomes
	\begin{equation}
		\label{eq:holdstab:est5}
		\norm{(u(\cdot, t_0), v(\cdot, t_0))}^2_{\Lx2 \times \Lx2} \le C s \left( D^2 e^{2 s q} + M^2 e^{-2s p} \right).
	\end{equation}
	Now, since $p, q > 0$, we can choose $s \gg 1$ large enough so that 
	\[
		s \le e^{s q} \quad \text{and} \quad s \le e^{s p}.
	\]
	In this way we can further estimate the right-hand side of \eqref{eq:holdstab:est5} as
	\begin{equation}
		\label{eq:holdstab:est6}
		\norm{(u(\cdot, t_0), v(\cdot, t_0))}^2_{\Lx2 \times \Lx2} \le C \left( D^2 e^{3 s q} + M^2 e^{- s p} \right).
	\end{equation}
	Next, we now choose $s_* \gg 1$ such that 
	\[
		D^2 e^{3 s q} = M^2 e^{- s p},
	\]
	namely 
	\begin{equation}
		\label{eq:holdstab:sstar}
		s_* = \frac{1}{3q + p} \log \frac{M^2}{D^2}.
	\end{equation}
	We observe that $s_* \to +\infty$ as $D \to 0$. 
	Therefore, it is indeed possible to choose it large enough so that all the previous constraints hold, provided that $D$ is small enough.
	With this choice, we have that 
	\[
		M^2 e^{- s_* p} = D^2 e^{3 s_* q} = D^2 \left( \frac{M^2}{D^2} \right)^{\frac{3q}{3q + p}} = M^{2 \frac{3q}{3q + p}} D^{2 \frac{p}{3q + p}}.
	\]
	Hence, by also taking the square root on both sides, \eqref{eq:holdstab:est6} finally becomes 
	\begin{equation}
		\label{eq:holdstab:est7}
		\norm{(u(\cdot, t_0), v(\cdot, t_0))}_{\Lx2 \times \Lx2} \le C M^{\frac{3q}{3q + p}} D^{\frac{p}{3q + p}},
	\end{equation}
	which is exactly \eqref{eq:holder_stab} with
	\[
		\theta(t_0) := \frac{p}{3q + p} = \frac{\phil(t_0) - \phil(0)}{3 \phil(T) - 2 \phil(t_0) - \phil(0)} 
		= \frac{e^{\lambda t_0} - 1}{3e^{\lambda T} - 2 e^{\lambda t_0} - 1} \in (0,1).
	\]
	This concludes the proof of Theorem \ref{thm:holder_stab}.
\end{proof}

We observe that the H\"older exponent $\theta(t_0)$ in Theorem \ref{thm:holder_stab} degenerates as $t_0 \to 0$.
Thus, the estimate above does not provide any further information on the initial data. 
Indeed, only a weaker stability estimate of logarithmic type is available for $t_0=0$, as shown below.

\begin{theorem}
	\label{thm:log_stab}
	Let $(u,v)$ be a strong solution to \eqref{eq:u}--\eqref{eq:bc} satisfying \eqref{eq:hp_reg}, and assume that \eqref{eq:hp_FG} holds.
	Let $M > 0$, $\lambda > 0$ and $C_1 > 0$ be as in Theorem \ref{thm:holder_stab}.
	Furthermore, let $M_1 > 0$ be a positive constant such that
	\begin{equation}
		\label{eq:hp_cond_log}
		\norm{(u,v)}_{\HT 1 {\Lx2} \times \HT 1 {\Lx2}} \le M_1.
	\end{equation}
	Then, there exists a constant $C_2 > 0$ such that
	\begin{equation}
		\label{eq:log_stab}
		\norm{(u(\cdot, 0), v(\cdot, 0))}_{\Lx2 \times \Lx2} \le C_2 \left( \log \frac{M}{\norm{(u(\cdot, T), v(\cdot, T))}_{\Hx2 \times \Hx1}} \right)^{-\frac14},
	\end{equation}
	if $\norm{(u(\cdot, T), v(\cdot, T))}_{\Hx2 \times \Hx1}$ is sufficiently small, where the constant $C_2$ can be quantified as 
	\[
		C_2 = \frac{2 M_1^{3/4} C_1^{1/4}}{\beta^{1/4}}, 
        \quad \text{with} \quad \beta = \frac{\lambda}{3e^{\lambda T} - 3}.
	\]
\end{theorem}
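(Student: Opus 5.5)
The strategy is to combine the Hölder estimate of Theorem \ref{thm:holder_stab} at a small positive time $t_0$ with a bound on the variation of the solution over $[0,t_0]$. The time-regularity assumption \eqref{eq:hp_cond_log} supplies the latter. We then optimise in $t_0$.

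Set $D := \norm{(u(\cdot,T),v(\cdot,T))}_{\Hx2\times\Hx1}$. For $t_0\in(0,T)$ we write
\[
(u,v)(\cdot,0) = (u,v)(\cdot,t_0) - \int_0^{t_0} \partial_t (u,v)(\cdot,t)\,\de t .
\]
By the Cauchy--Schwarz inequality in time and \eqref{eq:hp_cond_log}, the integral term has $\Lx2\times\Lx2$-norm at most $\sqrt{t_0}\,M_1$. Theorem \ref{thm:holder_stab} then gives
\[
\norm{(u(\cdot,0),v(\cdot,0))}_{\Lx2\times\Lx2} \le C_1 M^{1-\theta(t_0)} D^{\theta(t_0)} + M_1\sqrt{t_0} = C_1 M \Big(\frac{D}{M}\Big)^{\theta(t_0)} + M_1 \sqrt{t_0}.
\]
Next we bound $\theta(t_0)$ from below linearly in $t_0$. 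The numerator satisfies $e^{\lambda t_0}-1 \ge \lambda t_0$. The denominator satisfies $3e^{\lambda T}-2e^{\lambda t_0}-1 \le 3e^{\lambda T}-3$, since $e^{\lambda t_0}\ge 1$. Hence $\theta(t_0)\ge \beta t_0$ with $\beta=\lambda/(3e^{\lambda T}-3)$, which explains the constant in the statement. Since $D/M<1$ for $D$ small, this gives
\[
\Big(\frac{D}{M}\Big)^{\theta(t_0)} \le e^{-\beta t_0 \log(M/D)}.
\]

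It remains to choose $t_0$ to balance the two terms. Set $L:=\log(M/D)$, which is large when $D$ is small. A natural choice is $t_0 = L^{-1/2}$, or more precisely a tuned multiple $t_0 = \gamma L^{-1/2}$. Then the Hölder term is at most $C_1 M e^{-\beta\gamma\sqrt L}$, and the second term is $M_1\sqrt{\gamma}\,L^{-1/4}$. We also need $t_0<T$, which again requires $D$ small. Using $e^{-x}\le c_k x^{-k}$, the exponential term is bounded by a multiple of $L^{-1/4}$. For instance, $e^{-x}\le x^{-1/2}$ with $x=\beta\gamma\sqrt L$ gives $C_1 M(\beta\gamma)^{-1/2}L^{-1/4}$. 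Both terms are therefore $O(L^{-1/4})$, which yields \eqref{eq:log_stab}.

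The only delicate point, which is bookkeeping rather than a real obstacle, is matching the exact constant $C_2 = 2M_1^{3/4}C_1^{1/4}\beta^{-1/4}$. The power $M_1^{3/4}C_1^{1/4}$ suggests picking $\gamma$ to equalise the two contributions, so that each equals half of $C_2 L^{-1/4}$. To get this we would use an elementary inequality of the form $e^{-x}\le x^{-1/2}$, and possibly absorb $M$ into $M_1$ via $\norm{u(\cdot,0)}\lesssim M$. If the exact constant does not come out, I would still state the estimate with some $C_2$ depending on $M_1,C_1,\beta,M$, since the logarithmic rate is robust.
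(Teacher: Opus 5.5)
Your argument is correct, and it takes a different and more direct route than the paper.

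\textbf{Comparison with the paper.} The paper applies the fundamental theorem of calculus to $\norm{(u(\cdot,t),v(\cdot,t))}^2_{\Lx2\times\Lx2}$. It pairs $\norm{\partial_t(u,v)}_{L^2(0,t;L^2)}\le M_1$ with $\norm{(u,v)}_{L^2(0,t;L^2)}$ by Cauchy--Schwarz and inserts the H\"older bound at every $s\in(0,t)$, using $\theta(s)\ge\beta s$ and integrating $\eps^{\beta s}$ explicitly. It then minimises the resulting function $g(t)$ by hand, and the exponent $-\frac14$ arises as the square root of a term of order $\abs{\log\eps}^{-1/2}$.

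You work with the unsquared identity, so the increment costs only $M_1\sqrt{t_0}$, and you need Theorem \ref{thm:holder_stab} at a single time. This buys three things:
\begin{enumerate}
\item the argument is shorter;
\item it uses only the bound on $\partial_t(u,v)$, not the $C^0([0,T];L^2)$ bound the paper also extracts from $M_1$;
\item it is actually sharper: the H\"older term is at most $C_1Me^{-\beta t_0L}$, so taking, e.g., $t_0=\log L/(\beta L)$ gives a rate of order $(L^{-1}\log L)^{1/2}$ instead of $L^{-1/4}$.
\end{enumerate}

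\textbf{The constant.} Matching the constant is not delicate, and you should not balance the two terms. Since $C_1Me^{-\beta\gamma\sqrt L}$ decays faster than any power of $L$, proceed as follows. Fix $\gamma$ with $M_1\sqrt\gamma\le C_2/2$, where $C_2=2M_1^{3/4}C_1^{1/4}\beta^{-1/4}$. Then take $D$ so small that $C_1Me^{-\beta\gamma\sqrt L}\le\frac{C_2}{2}L^{-1/4}$ and $\gamma L^{-1/2}<T$. This yields \eqref{eq:log_stab} with exactly the stated constant, and in fact with any positive constant. The balancing through $e^{-x}\le x^{-1/2}$ is what introduced your factor $(C_1M)^{1/2}$; no absorption of $M$ into $M_1$ is needed.

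The paper's own chain would also carry a factor $M^{1/4}$ in $C_2$. A factor $M^{1/2}$ disappears when passing from $2M_1\bigl(\int_0^tM_1C_1M\eps^{\theta(s)}\,\de s\bigr)^{1/2}$ to $2M_1^{3/2}C_1^{1/2}\bigl(\int_0^t\eps^{\beta s}\,\de s\bigr)^{1/2}$. So it is your observation that actually justifies the stated $C_2$.

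\textbf{A step to make explicit.} The smallness threshold in Theorem \ref{thm:holder_stab} depends on $t_0$. Its proof needs $s_*\le e^{s_*p}$ with $p=e^{\lambda t_0}-1\to0$, while the constant in \eqref{eq:holdstab:est5} does not depend on $t_0$. Since your $t_0$ depends on $D$, you should note that $s_*=2L/(3q+p)\le CL$ and $s_*p=2L\theta(t_0)\ge2\beta\gamma\sqrt L$. Hence the requirement holds once $D$ is small.

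This is a genuine advantage of your route. The paper inserts the H\"older bound for all $s\in(0,t)$, which tacitly treats this threshold as uniform all the way down to $s=0$.
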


\begin{proof}
	The proof follows the ideas contained in the argument of \cite[Theorem 3.5]{BCFLR2024} and \cite[Theorem 7.2.2]{yamamoto}. 
	In particular, we stress that this approach provides a finer estimate than that of \cite[Theorem 7.2.2]{yamamoto}, while also requiring weaker hypotheses on the regularity of the solutions $u$ and $v$. 
    For the sake of completeness, we report the argument in full detail.

	Our aim is to employ the H\"older stability estimate \eqref{eq:holder_stab} for $t \in (0,T)$ to deduce a weaker stability estimate for $t=0$.
	Hence, to simplify the notations, we denote 
	\[
		\eps := \frac{\norm{(u(\cdot, T), v(\cdot, T))}_{\Hx2 \times \Hx1}}{M},
	\]
	so that the H\"older stability estimate proved in Theorem \ref{thm:holder_stab} can be rewritten as
	\begin{equation}
		\label{eq:logstab:est1}
		\norm{(u(\cdot, t), v(\cdot, t))}^2_{\Lx2 \times \Lx2} \le C_1 M \eps^{\theta(t)},
	\end{equation}
	for any $t \in (0,T)$, if $\eps$ is small enough.
	Then, we observe that under our regularity assumptions, the function $t \in [0,T] \to \norm{(u(\cdot,t), v(\cdot,t))}^2_{\Lx2 \times \Lx2}$ is absolutely continuous. Then, we can apply the Fundamental Theorem of Calculus to say that for any $t \in (0,T)$
	\begin{align*}
		& \norm{(u(\cdot, t), v(\cdot, t))}^2_{\Lx2 \times \Lx2} \\
		& \quad = \norm{(u(\cdot, 0), v(\cdot, 0))}^2_{\Lx2 \times \Lx2} + \int_0^t \frac{\de}{\de s} \left( \norm{u(\cdot, s)}^2_{\Lx2} + \norm{v(\cdot, s)}^2_{\Lx2} \right) \, \de s \\
		& \quad = \norm{(u(\cdot, 0), v(\cdot, 0))}^2_{\Lx2 \times \Lx2} + \int_0^t \left( 2 ( \partial_s u(\cdot, s), u(\cdot, s) )_{\Lx2} + 2 ( \partial_s v(\cdot, s), v(\cdot, s) )_{\Lx2} \right) \, \de s.
	\end{align*}
	Consequently, by using Cauchy-Schwarz's inequality, the stability inequality \eqref{eq:logstab:est1} and the bounds \eqref{eq:hp_cond_holder} and \eqref{eq:hp_cond_log}, we deduce that 
	\begin{align*}
		& \norm{(u(\cdot, 0), v(\cdot, 0))}^2_{\Lx2 \times \Lx2} \\
		& \quad \le \norm{(u(\cdot, t), v(\cdot, t))}^2_{\Lx2 \times \Lx2} \\
		& \qquad + 2 \left( \int_0^t \norm{(\partial_s u(\cdot, s), \partial_s v(\cdot, s))}^2_{\Lx2 \times \Lx2} \, \de s \right)^{1/2} \left( \int_0^t \norm{(u(\cdot, s), v(\cdot, s))}^2_{\Lx2 \times \Lx2}  \, \de s \right)^{1/2} \\
		& \quad \le C_1^2 M^2 \eps^{2\theta(t)} + 2 M_1 \left( \int_0^t M_1 \cdot C_1 M \eps^{\theta(s)} \, \de s  \right)^{1/2},
	\end{align*}
	where $\theta(t)$ is the H\"older exponent given in Theorem \ref{thm:holder_stab}, and we also used the fact that we can bound $\norm{(u,v)}_{\CT 0 {\Lx2} \times \CT 0 {\Lx2}} \le M_1$ on one factor in the second integral. 
	Now, by working on the explicit expressions of $\theta(t)$, one can easily see that, for any $t \in [0,T]$, it is bounded below by a linear function, namely 
	\[
		\theta(t) = \frac{e^{\lambda t} - 1}{3e^{\lambda T} - 2 e^{\lambda t} - 1} \ge \frac{\lambda}{3e^{\lambda T} - 3} t := \beta t,
	\]
	where $\beta > 0$ is the one defined in the statement. 
	Indeed, we just used the sharp inequalities $e^{\lambda t} - 1 \ge \lambda t$ on the numerator and $3e^{\lambda T} - 2 e^{\lambda t} - 1 \le 3e^{\lambda T} - 3$ on the denominator, the latter following from $e^{\lambda t} > 1$.
	Therefore, since we are assuming $0 < \eps < 1$, we have that 
	\[ 
		\eps^{\theta(t)} \le \eps^{\beta t} \quad \text{for any $t \in (0,T)$.} 
	\]
	Hence, we can further estimate
	\begin{align*}
		\norm{(u(\cdot, 0), v(\cdot, 0))}^2_{\Lx2 \times \Lx2} & \le C_1^2 M^2 \eps^{2\beta t} + 2 M_1^{3/2} C_1^{1/2} \left( \int_0^t \eps^{\beta s} \, \de s  \right)^{1/2} \\
		& = C_1^2 M^2 \eps^{2\beta t} + 2 M_1^{3/2} C_1^{1/2} \left( \frac{\eps^{\beta t}}{\beta \log \eps} - \frac{1}{\beta \log \eps} \right)^{1/2} \\
		& = C_1^2 M^2 \eps^{2\beta t} + \frac{2 M_1^{3/2} C_1^{1/2}}{\beta^{1/2} \sqrt{\abs{\log \eps}}} (1 - \eps^{\beta t})^{1/2} := g(t).
	\end{align*}
	Now, we observe that this estimate holds for any $t \in (0,T)$, so we can make it sharper and independent of $t$ by minimising the function $g(t)$. Then, we compute its derivative and, recalling that $\log \eps < 0$ since $0 < \eps < 1$, we obtain that 
	\[ 
		g'(t) = - C_1^2 M^2 2 \beta \abs{\log \eps} \cdot \eps^{2\beta t} + M_1^{3/2} C_1^{1/2} \beta^{1/2} \sqrt{\abs{\log \eps}} \frac{\eps^{\beta t}}{(1-\eps^{\beta t})^{1/2}}. 
	\]
	If we call $x = \eps^{\beta t}$, $0 \ls x \ls 1$, we can find the critical points by solving the equation
	\[ 
		- C_1^2 M^2 2 \beta \abs{\log \eps} \cdot x^2 + M_1^{3/2} C_1^{1/2} \beta^{1/2} \sqrt{\abs{\log \eps}} \frac{x}{(1-x)^{1/2}} = 0, 
	\]
	which, after some manipulations, is equivalent to
	\begin{equation}
		\label{eq:xsolve}
		x \sqrt{1-x} = \frac{M_1^{3/2}}{2 \beta^{1/2} C_1^{3/2} M^2 \sqrt{\abs{\log \eps}}}.
	\end{equation}
	By analysing the function $f(x)=x \sqrt{1-x}$, we see that its graph is like the one in Figure \ref{fig:root_function}, with maximum value at $\frac{2\sqrt{3}}{9}$. Then, equation \eqref{eq:xsolve} has at least a solution if 
	\[ 
		\frac{M_1^{3/2}}{2 \beta^{1/2} C_1^{3/2} M^2 \sqrt{\abs{\log \eps}}} \le  \frac{2\sqrt{3}}{9}, 
	\]
	which is verified if $\eps$ is small enough. 
	Moreover, one can easily see that the solution corresponding to the minimum value of $g(t)$ is the smallest one, let us call it $\bar{x}$. 
    Such $\bar{x}$ is attained at some value $\bar{t}$, which should belong to the interval $[0,T]$, at least when $\eps$ is sufficiently small. 
    Indeed, for $\eps << 1$, one can see that $x\sqrt{1-x} = \eps^{\beta t} \sqrt{1 - \eps^{\beta t}} \approx \eps^{\beta t}$, so that equation \eqref{eq:xsolve} approximately corresponds to 
    \[
        \eps^{\beta t} = \frac{M_1^{3/2}}{2 \beta^{1/2} C_1^{3/2} M^2 \sqrt{\abs{\log \eps}}}, 
    \]
    which is solved by 
    \[
        \bar{t} \approx \frac{ \log \left( \frac{M_1^{3/2}}{2 \beta^{1/2} C_1^{3/2} M^2 \sqrt{\abs{\log \eps}}} \right) }{ \beta \log \eps } \to 0^+ \quad \hbox{as $\eps \to 0^+$.}
    \]
    Then, for any fixed $T$, if $\eps$ is suffiently small, one can always find a suitable $\bar{t} \in [0,T]$.
    We now have to estimate the value of $\bar{x}$ to complete our stability estimate.
	Hence, we can see that the increasing branch of the function $f$ lies between the two lines $y=x$ and $y=\frac{\sqrt{3}}{3}x$, therefore we can estimate the value of $\bar{x}$ with the solutions corresponding to the two lines, namely
	\begin{equation}
		\label{eq:xestimate}
		\frac{M_1^{3/2}}{2 \beta^{1/2} C_1^{3/2} M^2 \sqrt{\abs{\log \eps}}} \le \bar{x} \le \frac{\sqrt{3}M_1^{3/2}}{2 \beta^{1/2} C_1^{3/2} M^2 \sqrt{\abs{\log \eps}}}.
	\end{equation}

	\begin{figure}[t]
    	\centering
    	\includegraphics[scale=0.8]{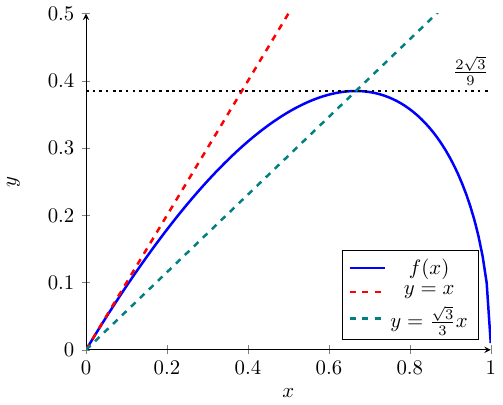}
    	\captionof{figure}{Graph of the function $f(x) = x \sqrt{1-x}$, \\ along with the two linear bounds.}
    	\label{fig:root_function}
	\end{figure}

	\noindent
	Finally, we can use this piece of information to close our estimate:
	\begin{align*}
		& \norm{(u(\cdot, 0), v(\cdot, 0))}^2_{\Lx2 \times \Lx2} \\
		& \quad \le \min_{t \in [0,T]} g(t) = C_1^2 M^2 \bar{x}^2 + \frac{2 M_1^{3/2} C_1^{1/2}}{\beta^{1/2} \sqrt{\abs{\log \eps}}} (1 - \bar{x})^{1/2} \\
		& \quad \le C_1^2 M^2 \frac{3 M_1^3}{4\beta C_1^3 M^4 \abs{\log \eps}} + \frac{2 M_1^{3/2} C_1^{1/2}}{\beta^{1/2} \sqrt{\abs{\log \eps}}} \underbrace{\left( 1 - \frac{M_1^{3/2}}{2 \beta^{1/2} C_1^{3/2} M^2 \sqrt{\abs{\log \eps}}} \right)^{1/2}}_{\le 1} \\
		& \quad \le \frac{2 M_1^{3/2} C_1^{1/2}}{\beta^{1/2}} \frac{1}{\sqrt{\abs{\log \eps}}} \left( 1 + \frac{3 M_1^{3/2}}{8 \beta^{1/2} C_1^{3/2} M^2 \sqrt{\abs{\log \eps}}} \right) \\
        & \quad \le \frac{4 M_1^{3/2} C_1^{1/2}}{\beta^{1/2}} \frac{1}{\sqrt{\abs{\log \eps}}},
	\end{align*}
	if we additionally assume that 
    \[
        \frac{3 M_1^{3/2}}{8 \beta^{1/2} C_1^{3/2} M^2 \sqrt{\abs{\log \eps}}} \le 1,
    \]
    which is also true if $\eps$ is small enough. 
	This concludes the proof of Theorem \ref{thm:log_stab}, by taking the square root on both sides and recalling that $\abs{\log \eps} = \log \frac{1}{\eps}$, if $0 < \eps < 1$.
\end{proof}

\section{Application to tumour growth models}
\label{sec:tumour}

    An important application of the results contained in Sections \ref{sec:carleman} and \ref{sec:stability} stems in the context of phase-field tumour growth models.
    Below, we show how we can apply our general results to a particular model, thus extending some previously obtained results in \cite{ABCFR2025}.
    However, we stress that the Carleman estimate and its consequences can be applied also to other phase-field tumour growth models, possibily with different source terms (e.g., \cite{GL2017, Fritz2023}), as long as they retain a Cahn--Hilliard-reaction-diffusion structure and admit sufficiently regular solutions (cf. Theorem \ref{thm:strongsols} below).
    
    We consider the following model, based on the one introduced in \cite{HZO2012}:
    \begin{alignat}{2}
    	& \partial_t \phi - \Delta \mu
    	= P(\phi) \left(\sigma + \chi (1-\phi) - \mu \right) - c(x,t) \hh (\phi)
    	\qquad && \hbox{in $Q_T$,} \label{eq:phi} \\
    	& \mu 
    	= - \Delta \phi + F'(\phi) - \chi \sigma 
    	\qquad && \hbox{in $Q_T$,} \label{eq:mu} \\
    	& \partial_t \sigma - \Delta \sigma + \chi \Delta \phi 
    	= - P(\phi) \left(\sigma + \chi (1-\phi) - \mu \right) + \kappa (1 - \sigma)
    	\qquad && \hbox{in $Q_T$,} \label{eq:sigma} \\
    	& \partial_{\n} \phi = \partial_{\n} \mu = \partial_{\n} \sigma = 0 
    	\qquad && \hbox{on $\Sigma_T$,} \label{eq:bct} \\
    	& \phi(0) = \phi_0, \quad \sigma(0) = \sigma_0 
    	\qquad && \hbox{in $\Omega$,} \label{eq:ict}
    \end{alignat}
    Here, $\phi$ is a phase-field variable representing the tumour volume fraction, where $\phi \equiv 0$ is the healthy phase and $\phi \equiv 1$ is the tumour one, and satisfies a Cahn--Hilliard-type equation, being $\mu$ the corresponding chemical potential. 
    Conversely, $\sigma$ represents the concentration of a key nutrient driving the tumour proliferation and satisfies a reaction-diffusion equation. 
    We also include chemotaxis effects through the parameter $\chi \ge 0$.
    The model describes tumour growth in the a-vascular phase and is derived within a diffuse-interface multiphase mixture approach. The dynamics of the tumour cells and the nutrient component are described by mass action laws, with associated chemical potentials containing cells-cells and cells-matrix adhesion contributions, encoded in the form of the nonlinearity $F(\phi)$, as well as interface, mass and chemotactic terms. See \cite{ABCFR2025,HPZO2013} for details. Also, the mass exchange terms between the phases are represented by chemical kinetic rates incorporating tumour cells proliferation, with phase-dependent proliferation rate encoded in the term $P(\phi)$, nutrient consumption, apoptosis and therapy-related death terms proportional to $\hh (\phi)$, with death rate $c(x,t)$, and nutrient release by the healthy tissues, with release rate $\kappa$.
    The inverse problem we want to address in this context is the identification of the initial data $(\phi_0, \sigma_0)$ starting from a single measurement of $(\phi(T), \sigma(T))$ at the final time.

    We start our analysis by specifying some hypotheses on the non-linearities and the parameters of the system, so that the direct problem is well-posed and admits a unique strong solution. 
    Indeed, we postulate the following assumptions (cf. \cite{ABCFR2025} and references therein).
    \begin{enumerate}[font = \bfseries, label = A\arabic*., ref = \bf{A\arabic*}]
	\item\label{ass:setting} $\Omega \subset \R^d$, $d=2,3$, is an open bounded domain with $C^4$ boundary, $T > 0$, and $\chi \ge 0$. 
	\item\label{ass:fbelow} $F \in C^4(\R)$ and there exist $c_1 \gs \chi^2 \ge 0$ and $c_2 \ge 0$ such that  
	\[ F(y) \ge c_1 y^2 - c_2 \quad \hbox{for any $y\in\R$.} \]
	\item\label{ass:fder} There exist $c_3 > 0$ and $c_4 \ge 0$ such that
	\[ \abs{F'(y)} \le c_3 F(y) + c_4 \quad \hbox{for any $y\in\R$.} \] 
	\item\label{ass:fconv} $F$ can be written as $F = F_0 + F_1$ for some $F_0, F_1 \in C^4(\R)$. Moreover, there exist $c_0, c_0' \gs 0$, $s \in [2,6)$ and $l \ge 0$ such that 
	\[ c_0' (1 + \abs{y}^{s-2}) \le F_0''(y) \le c_0 (1 + \abs{y}^{s-2}) \quad \hbox{for any $y\in\R$,} \]
	and 
	\[ \abs{F_1''(y)} \le l \quad \hbox{for any $y\in\R$.} \]
	\item\label{ass:p} $P \in C^2(\R)$ and there exist $c_5 >0$ and $q \in [1,2]$ such that 
	\[ 0 \le P(s) \le c_5 (1+\abs{y}^q) \quad \forall y \in \R. \]  
	\item\label{ass:hc} $\hh \in C^2(\R) \cap L^\infty(\R)$ and $c \in L^\infty(Q_T)$.
	\item\label{ass:initial} $\phi_0 \in \Hx2$ with $\partial_{\n} \phi_0 = 0$ on $\partial \Omega$, and $\sigma_0 \in \Hx1$.
    \end{enumerate}
    In the following, when saying that some constants will depend on the parameters of the model, we will mean that they will depend on some of the constants or some norms of the functions introduced above. 
    Moreover, for simplicity, we denote by 
    \[
        H^2_N(\Omega) := \{u \in \Hx2 \mid \partial_{\n} u = 0 \hbox{ on $\partial \Omega$}\}
    \]
    the Hilbert space in which we take the initial datum $\phi_0$ (cf. \ref{ass:initial}). 
    
    Under the above assumptions, in \cite{ABCFR2025} it was shown that the system \eqref{eq:phi}--\eqref{eq:ict} is well-posed in the sense of strong solutions (cf. also \cite{FGR2015_TumGrowth, GY2020}). 
    We report the main result below.

    \begin{theorem}
	\label{thm:strongsols}
	Under assumptions \ref{ass:setting}--\ref{ass:initial}, there exists a unique strong solution $(\phi, \mu, \sigma)$ to \eqref{eq:phi}--\eqref{eq:ict} enjoying the following regularities:
	\begin{align*}
		& \phi \in H^1(0,T; \Lx2) \cap C^0([0,T]; \Hx2) \cap L^2(0,T;\Hx 4), \\
		& \mu \in L^2(0,T; \Hx2), \\
		& \sigma \in H^1(0,T; \Lx2) \cap C^0([0,T]; \Hx1) \cap L^2(0,T; \Hx2), 
	\end{align*}
	In particular, there exists a constant $C>0$, depending only on the parameters of the model and on the data $\phi_0$ and $\sigma_0$, such that: 
	\begin{equation}
		\label{eq:strongnorms_est}
		\begin{split}
			& \norm{\phi}_{H^1(0,T;\Lx2) \cap L^\infty(0,T;\Hx2) \cap L^2(0,T;\Hx4)} 
			+ \norm{\mu}_{ L^2(0,T;\Hx2)} \\
			& \quad + \norm{\sigma}_{H^1(0,T;\Lx2) \cap L^\infty(0,T;\Hx1) \cap L^2(0,T;\Hx2)} \le C.
		\end{split}
	\end{equation}
    Moreover, a strong continuous dependence estimate holds.
    Indeed, let ${\phi_0}_1$, ${\sigma_0}_1$ and ${\phi_0}_2$, ${\sigma_0}_2$ be two sets of data satisfying \emph{\ref{ass:initial}} and let $(\phi_1, \mu_1, \sigma_1)$ and $(\phi_2, \mu_2, \sigma_2)$ two corresponding strong solutions as above. Then, there exists a constant $\bar{C}>0$, depending only on the data of the system and on the norms of $\{ ({\phi_0}_i, {\sigma_0}_i) \}_{i=1,2}$, but not on their difference, such that
	\begin{equation}
		\label{eq:contdep_estimate_strong}
		\begin{split}
			& \norm{\phi_1 - \phi_2}_{\HT 1 {\Lx2} \cap \LT \infty {\Hx2} \cap \LT 2 {\Hx4}} 
			+ \norm{\mu_1 - \mu_2}_{\LT 2 {\Hx2}} \\
			& \qquad + \norm{\sigma_1 - \sigma_2}_{\HT 1 {\Lx2} \cap \LT \infty {\Hx1} \cap \LT 2 {\Hx2}} \\
            & \quad \le \bar{C} \left( \norm{{\phi_0}_1 - {\phi_0}_2}_{\Hx2} + \norm{{\sigma_0}_1 - {\sigma_0}_2}_{\Hx1} \right).
		\end{split}
	\end{equation}
    \end{theorem}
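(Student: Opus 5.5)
The plan is a standard Galerkin approximation combined with a hierarchy of a priori estimates, followed by a strong continuous-dependence estimate on the difference of two solutions. Since the paper attributes this result to \cite{ABCFR2025}, I would follow that structure, adapted to the present source terms.

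\textbf{Weak existence.} I will work with a Faedo--Galerkin scheme based on the Neumann eigenfunctions of $-\Delta$. The first step is a basic energy estimate. Test \eqref{eq:phi} by $\mu + \chi(1-\phi)$, more precisely by $\mu$ combined with the nutrient chemical potential, and test \eqref{eq:sigma} by $N_\sigma := \sigma + \chi(1-\phi)$. The chemotactic cross terms $\chi\Delta\phi$ and $-\chi\sigma$ then combine into the time derivative of the energy
\[
\int_\Omega \Big( \tfrac12|\nabla\phi|^2 + F(\phi) + \tfrac12|\sigma|^2 + \chi\sigma(1-\phi) \Big)\,\de x .
\]
The reaction terms $P(\phi)(\sigma+\chi(1-\phi)-\mu)$ appear with opposite signs in the two equations, so after testing they produce the dissipative term $-\int_\Omega P(\phi)\,|\sigma+\chi(1-\phi)-\mu|^2\,\de x$. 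The coercivity assumption $c_1>\chi^2$ in \ref{ass:fbelow} is exactly what controls the indefinite term $\chi\sigma\phi$ and makes the energy bounded below by $\norm{\phi}_{\Hx1}^2+\norm{\sigma}_{\Lx2}^2$. The terms $c\,\hh(\phi)$ and $\kappa(1-\sigma)$ are bounded perturbations. Gronwall then gives $\phi\in \LT\infty{\Hx1}$, $\sigma\in \LT\infty{\Lx2}\cap\LT2{\Hx1}$ and $\nabla\mu\in \LT2{\Lx2}$. Using \ref{ass:fder}, the mean of $\mu$ is controlled, and hence $\mu\in\LT2{\Hx1}$. The growth bounds in \ref{ass:fconv} (with $s<6$) and \ref{ass:p} (with $q\le2$), combined with the Sobolev embedding $\Hx1\subset\Lx6$ for $d\le3$, make all nonlinear terms bounded in the relevant duals.

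\textbf{Higher regularity.} Next I upgrade to strong regularity. Differentiating in time is problematic at the Galerkin level, so instead I test \eqref{eq:phi} by $\Delta^2\phi$ (equivalently, I use $\partial_t\phi+\Delta^2\phi=\Delta(F'(\phi)-\chi\sigma)+\dots$), and I test \eqref{eq:sigma} by $-\Delta\sigma$ and by $\partial_t\sigma$. The cross term $\chi\Delta\phi$ tested against $-\Delta\sigma$ is handled by Young's inequality together with the $\Hx4$ dissipation. The terms $\Delta F'(\phi)$ are controlled by $\norm{\phi}_{\Hx2}$-dependent polynomial bounds via Gagliardo--Nirenberg interpolation between $\Hx1$ and $\Hx4$. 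With $\phi_0\in\Hnx2$ and $\sigma_0\in\Hx1$, this closes by a Gronwall argument and gives \eqref{eq:strongnorms_est}. Elliptic regularity for the Neumann Laplacian, using the $C^4$ boundary, gives $\phi\in\LT2{\Hx4}$, and $\mu\in\LT2{\Hx2}$ then follows directly from \eqref{eq:mu}. Continuity $\phi\in\CT0{\Hx2}$ and $\sigma\in\CT0{\Hx1}$ follows from the Lions--Magenes interpolation lemma, and passing to the limit in the Galerkin scheme is standard by Aubin--Lions compactness.

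\textbf{Uniqueness and \eqref{eq:contdep_estimate_strong}.} I set $\phi=\phi_1-\phi_2$ and define $\mu$ and $\sigma$ analogously. The difference system has the form \eqref{eq:u}--\eqref{eq:v}. Differences of nonlinearities are written as $F'(\phi_1)-F'(\phi_2)=\int_0^1F''(\cdot)\,\de\tau\,\phi$, and their coefficients are bounded in $L^\infty$ thanks to the uniform $\LT\infty{\Hx2}\subset L^\infty(Q_T)$ bounds already obtained. I then repeat the same higher-order tests on the difference system. The main obstacle is the strong-level estimate for the difference, specifically controlling $\Delta(F'(\phi_1)-F'(\phi_2))$ and the $P$-terms in $\Lx2$. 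This requires $\Hx2$-bounds on both solutions and the $C^4$ regularity of $F$ (and $C^2$ of $P$) for the second derivatives of the composite coefficients. I would first try estimating these by $C\norm{\phi}_{\Hx2}$, with $C$ depending on the strong norms of $\phi_1$ and $\phi_2$, and then close with Gronwall.
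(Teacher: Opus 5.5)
The paper does not prove this statement itself; it cites \cite[Theorem~3.2, Theorem~4.1 and Corollary~4.6]{ABCFR2025}. Your architecture is the standard one: Faedo--Galerkin, the energy identity from pairing \eqref{eq:phi} with $\mu$ and \eqref{eq:sigma} with $N=\sigma+\chi(1-\phi)$ (not \eqref{eq:phi} with $\mu+\chi(1-\phi)$, as your first sentence says), coercivity from $c_1>\chi^2$, a $\Delta^2\phi$-test coupled with $-\Delta\sigma$- and $\partial_t\sigma$-tests, Lions--Magenes for time continuity, and Gronwall for differences. The weak-level part is fine. The difference estimate is also not the real obstacle. Once both solutions are bounded in $\LT\infty{\Hx2}\subset L^\infty(Q_T)$, the source terms of the difference system satisfy exactly the bound of Lemma~\ref{lem:FG}. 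Testing by $\partial_t\phitil$, $\Delta^2\phitil$, $\phitil$, $\partial_t\sigmatil$, $-\Delta\sigmatil$ and $\sigmatil$ then closes by Gronwall.

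The step that does not close under \ref{ass:setting}--\ref{ass:initial} is the global $H^2$ estimate for a single solution. After testing by $\Delta^2\phi$ you must bound $\Delta F'(\phi)=F''(\phi)\Delta\phi+F'''(\phi)\abs{\nabla\phi}^2$ in $\Lx2$.

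The first piece is fine. By \ref{ass:fconv} and Gagliardo--Nirenberg in $d=3$, namely $\norm{\phi}_{\Lx\infty}\le C\norm{\phi}_{\Hx1}^{5/6}\norm{\phi}_{\Hx4}^{1/6}$ and $\norm{\Delta\phi}_{\Lx2}\le C\norm{\phi}_{\Hx1}^{2/3}\norm{\phi}_{\Hx4}^{1/3}$, it is bounded by $C(1+\norm{\phi}_{\Hx4}^{s/6})$. The condition $s<6$ is precisely what lets you absorb it.

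The second piece is the problem: the hypotheses give no growth bound on $F'''$. \ref{ass:fconv} constrains only $F_0''$ and $F_1''$, and $F\in C^4$ is purely local, so $F_0'''$ may grow arbitrarily fast while $F_0''$ satisfies the two-sided bound. Before an $L^\infty(Q_T)$ bound is known, $\norm{F'''(\phi)}_{\Lx\infty}$ is only controlled by $\max_{\abs{y}\le C\norm{\phi}_{\Hx2}^{1/2}}\abs{F'''(y)}$. Your inequality therefore has the form $Y'\le G(Y)$ with $Y=\norm{\Delta\phi}^2_{\Lx2}$ and $G$ superlinear and possibly non-polynomial. That yields a bound only on a short time interval, not \eqref{eq:strongnorms_est} on $[0,T]$.

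To close you need one of two things:
\begin{itemize}
\item an extra assumption such as $\abs{F'''(y)}\le C(1+\abs{y}^{r})$ with $0\le r<3$ (e.g.\ $r=\max\{s-3,0\}$); via $\norm{\nabla\phi}_{\Lx4}\le C\norm{\phi}_{\Hx1}^{3/4}\norm{\phi}_{\Hx4}^{1/4}$ this gives $\norm{F'''(\phi)\abs{\nabla\phi}^2}_{\Lx2}\le C(1+\norm{\phi}_{\Hx4}^{(r+3)/6})$, and Young plus Gronwall then work;
\item an $F'''$-free route to $\phi\in\LT\infty{\Hx2}$.
\end{itemize}
The natural $F'''$-free route is to test with $\partial_t\mu$ to get $\mu\in\LT\infty{\Hx1}$, then use elliptic regularity relying only on $F_0''\ge0$. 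It is not available under the stated hypotheses. It requires $\mu(0)\in\Hx1$, i.e.\ more than $\phi_0\in\Hx2$. It also generates the term $(\partial_t\mu, c\,\hh(\phi))_{\Lx2}$, which needs time or space derivatives of $c$, whereas \ref{ass:hc} only gives $c\in L^\infty(Q_T)$.
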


    \begin{proof}
        See \cite[Theorem 3.2, Theorem 4.1 and Corollary 4.6] {ABCFR2025}
    \end{proof}

    \begin{remark}
    \label{rmk:fderivatives}
    By Sobolev embeddings, one can easily see that \eqref{eq:strongnorms_est} implies that 
    \[ \norm{\phi}_{\Cqt0} \le C, \]
    for some $C \gs 0$, depending only on the parameter of the system. 
    Consequently, given that $F \in C^4(\R)$ and $P, \hh \in C^2(\R)$, we also infer that
    \begin{gather*}
        \norm{F^{(i)}(\phi)}_{\Cqt0} \le C \quad \hbox{for any $i=1,\dots,4$,} \\
        \norm{P^{(i)}(\phi)}_{\Cqt0} \le C \quad \hbox{for any $i=0,\dots,2$,} \\
        \norm{\hh^{(i)}(\phi)}_{\Cqt0} \le C \quad \hbox{for any $i=0,\dots,2$.}
    \end{gather*}
    \end{remark}

    To better introduce our backward inverse problem, we define the following class of admissible initial data:
    \begin{equation}
        \label{eq:def:iad}
        \Iad = \{(\phi_0, \sigma_0) \in H^2_N(\Omega) \times \Hx1 \mid \norm{(\phi_0, \sigma_0)}_{\Hx2 \times \Hx1} \le M \},
    \end{equation}
    for some fixed constant $M > 0$.
    For later purposes, we also define the constant $M_1 > 0$ to be the sharpest one for which, given any $(\phi_0, \sigma_0) \in \Iad$, it holds 
    \begin{equation}
        \label{eq:def:m1}
        \norm{(\phi, \sigma)}_{\HT 1 {\Lx2} \times \HT 1 {\Lx2}} \le M_1,
    \end{equation}
    where $(\phi, \sigma)$ is the strong solution to \eqref{eq:phi}--\eqref{eq:ict}.
    Such a constant is related to the constant $C>0$ appearing on the right-hand side of \eqref{eq:strongnorms_est}.
    Both $M$ and $M_1$ will be the explicit constants appearing in the analogous versions of Theorems \ref{thm:holder_stab} and \ref{thm:log_stab} for our application to the tumour growth model \eqref{eq:phi}--\eqref{eq:ict}.

    We call $\Rcal: \Iad \to \Hx2 \times \Hx1$ the forward operator such that 
    \[
        \Rcal(\phi_0, \sigma_0) = (\phi(T), \sigma(T)),
    \]
    where $(\phi(T), \sigma(T))$ is the strong solution to \eqref{eq:phi}--\eqref{eq:ict}, evaluated at the final time.
    Then, by Theorem \ref{thm:strongsols}, we know that $\Rcal$ is well-defined and Lipschitz continuous. 
    As in \cite{ABCFR2025}, the inverse problem we are interested in concerns the determination of the earlier state $(\phi_0, \sigma_0)$, given a single measurement of the final condition $(\phi(T), \sigma(T))$. 
    Similar problems were considered also in \cite{BCFLR2024} for a specific prostate cancer model, and in \cite{FLS2021} for a non-local version of the above model \eqref{eq:phi}--\eqref{eq:ict}.
    In \cite[Theorem 4.3]{ABCFR2025}, it was shown that the solution to such an inverse problem for \eqref{eq:phi}--\eqref{eq:ict} is unique. 
    However, due to the approach based on logarithmic convexity for a fourth order system, no further stability results were obtained. 
    At the same time, an additional smallness condition on the chemotactic coefficient $\chi$ was required. 
    With the new approach based on Carleman estimates introduced in Sections \ref{sec:carleman} and \ref{sec:stability}, we plan to prove some stability results on the backward problem, by also removing the technical restriction on $\chi$ (cf. \cite[Remark 4.5]{ABCFR2025}).

    \subsection{Logarithmic stability}

    To apply the stability results above to this non-linear system, we consider two solutions $(\phi_1, \sigma_1)$ and $(\phi_2, \sigma_2)$ to \eqref{eq:phi}--\eqref{eq:ict} corresponding to two pairs of initial data $(\phi_0^i, \sigma_0^i)$ for $i=1,2$.
    Then, we let $\phitil = \phi_1 - \phi_2$, $\mutil = \mu_1 - \mu_2$, $\sigmatil = \sigma_1 - \sigma_2$, and, by substituting the corresponding equation for $\mutil$, we can write the system solved by the difference of two solutions in the following way:
    \begin{alignat}{2}
        & \partial_t \phitil + \Delta^2 \phitil + \chi \Delta \sigmatil = \tilde{f}_1 
        \quad && \hbox{in $Q_T$,} \label{eq:phi2} \\
        & \partial_t \sigmatil - \Delta \sigmatil + \chi \Delta \phitil = \tilde{f}_2 
        \quad && \hbox{in $Q_T$,} \label{eq:sigma2}\\
        & \partial_{\n} \phitil = \partial_{\n} \Delta \phitil = \partial_{\n} \sigmatil = 0 
        \quad && \hbox{on $\Sigma_T$,} \label{eq:bct2}\\
        & \phitil(0) = {\phi_0}_1 - {\phi_0}_2, \quad \sigmatil(0) = {\sigma_0}_1 - {\sigma_0}_2 
        \quad && \hbox{in $\Omega$,} \label{eq:ict2}
    \end{alignat}
    where the right-hand sides are
    \begin{align*}
        & \tilde{f}_1 = \Delta (F'(\phi_1)) - \Delta (F'(\phi_2)) + P(\phi_1)(\sigmatil - \chi \phitil + \Delta \phitil - (F'(\phi_1) - F'(\phi_2)) + \chi \sigmatil) \\
        & \qquad + (P(\phi_1) - P(\phi_2))(\sigma_2 + \chi (1 - \phi_2) + \Delta \phi_2 - F'(\phi_2) + \chi \sigma_2) - c (\hh(\phi_1) - \hh(\phi_2)), \\
        & \tilde{f}_2 = - P(\phi_1)(\sigmatil - \chi \phitil + \Delta \phitil - (F'(\phi_1) - F'(\phi_2)) + \chi \sigmatil) \\
        & \qquad - (P(\phi_1) - P(\phi_2))(\sigma_2 + \chi (1 - \phi_2) + \Delta \phi_2 - F'(\phi_2) + \chi \sigma_2) - \kappa \sigmatil.
    \end{align*}
    In particular, the boundary conditions \eqref{eq:bct} are equivalent to those in \eqref{eq:bct2}.
    Now, we observe that this system has the same structure of \eqref{eq:u}--\eqref{eq:bc}, with $\phitil = u$, $\sigmatil = v$, $a = b = \chi$, $F = \tilde{f}_1$ and $G = \tilde{f}_2$.
    Moreover, the following property holds.

    \begin{lemma}
        \label{lem:FG}
        Assume \ref{ass:setting}--\ref{ass:initial}, and let $(\phi_1, \sigma_1)$ and $(\phi_2, \sigma_2)$ be two strong solutions to \eqref{eq:phi}--\eqref{eq:ict} corresponding to two pairs of initial data $({\phi_0}_i, {\sigma_0}_i) \in \Iad$ for $i=1,2$.
        Then, the source terms $\tilde{f}_1$ and $\tilde{f}_2$ in \eqref{eq:phi2}--\eqref{eq:ict2} satisfy hypothesis \eqref{eq:hp_FG}, namely there exists a function $K \in \Lt \infty$ such that
        \begin{equation*}
            \norm{\tilde{f}_1}^2_{\Lx2} + \norm{\tilde{f}_2}^2_{\Lx2} \le K(t) \left( \norm{\phitil}^2_{\Hx2} + \norm{\sigmatil}^2_{\Hx1} \right),
        \end{equation*}
        for a.e. $t \in (0,T)$.
    \end{lemma}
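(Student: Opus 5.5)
The plan is to bound each term of $\tilde f_1$ and $\tilde f_2$ separately in $\Lx2$, pointwise in time. Uniform bounds on $\phi_1,\phi_2$ come from Theorem \ref{thm:strongsols} and Remark \ref{rmk:fderivatives}. Time-dependent but essentially bounded factors come from $\norm{\phi_2(t)}_{\Hx2}$ and $\norm{\sigma_2(t)}_{\Hx1}$, which are bounded in $\LT\infty{}$ by \eqref{eq:strongnorms_est}. Since both initial data lie in $\Iad$, all constants depend only on $M$ and on the parameters of the model. Throughout I use the embeddings $\Hx2\hookrightarrow L^\infty(\Omega)$ and $\Hx1\hookrightarrow \Lx6$ together with $\Hx2 \hookrightarrow W^{1,4}(\Omega)$, valid since $d\le 3$.

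\textbf{Lower-order and zeroth-order terms.} By the mean value theorem, $\abs{F'(\phi_1)-F'(\phi_2)} \le \norm{F''}_{C^0([-R,R])}\abs{\phitil}$, with $R$ the uniform $\Cqt0$ bound on $\phi_i$; the same holds for $P$ and $\hh$. Combining this with $P(\phi_1)\in \Cqt0$ and $c\in L^\infty(Q_T)$, the terms $P(\phi_1)(\sigmatil-\chi\phitil+\Delta\phitil-(F'(\phi_1)-F'(\phi_2))+\chi\sigmatil)$, $c(\hh(\phi_1)-\hh(\phi_2))$ and $\kappa\sigmatil$ are bounded in $\Lx2$ by $C(\norm{\phitil}_{\Hx2}+\norm{\sigmatil}_{\Lx2})$.

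For the term $(P(\phi_1)-P(\phi_2))\,g_2$, with $g_2 := \sigma_2+\chi(1-\phi_2)+\Delta\phi_2-F'(\phi_2)+\chi\sigma_2$, I write
\[
\norm{(P(\phi_1)-P(\phi_2))g_2}_{\Lx2}\le C\norm{\phitil}_{L^\infty(\Omega)}\norm{g_2}_{\Lx2}\le C\norm{\phitil}_{\Hx2}\big(1+\norm{\phi_2(t)}_{\Hx2}+\norm{\sigma_2(t)}_{\Lx2}\big).
\]
The bracket is in $\Lt\infty$, which gives a contribution to $K(t)$.

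\textbf{The main term $\Delta(F'(\phi_1))-\Delta(F'(\phi_2))$.} This is the expected obstacle, because it contains second derivatives. Expanding,
\[
\Delta F'(\phi_1)-\Delta F'(\phi_2) = F''(\phi_1)\Delta\phitil + (F''(\phi_1)-F''(\phi_2))\Delta\phi_2 + F'''(\phi_1)\big(\abs{\nabla\phi_1}^2-\abs{\nabla\phi_2}^2\big) + (F'''(\phi_1)-F'''(\phi_2))\abs{\nabla\phi_2}^2 .
\]
I treat the four pieces as follows.
\begin{itemize}
\item The first piece is bounded by $C\norm{\phitil}_{\Hx2}$.
\item The second piece is bounded by $C\norm{\phitil}_{L^\infty}\norm{\Delta\phi_2}_{\Lx2}\le C\norm{\phitil}_{\Hx2}\norm{\phi_2(t)}_{\Hx2}$.
\item For the third piece I write $\abs{\nabla\phi_1}^2-\abs{\nabla\phi_2}^2=\nabla\phitil\cdot\nabla(\phi_1+\phi_2)$ and use Hölder in $L^4\times L^4$, giving the bound $C\norm{\phitil}_{\Hx2}(\norm{\phi_1}_{\Hx2}+\norm{\phi_2}_{\Hx2})$.
\item The fourth piece is bounded by $C\norm{\phitil}_{L^\infty}\norm{\nabla\phi_2}^2_{L^4}$.
\end{itemize}
The derivatives $F''$ and $F'''$ are bounded on the relevant range by Remark \ref{rmk:fderivatives}, and $F^{(4)}$ is used for the mean value theorem applied to $F'''$.

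Summing and squaring yields the claim with
\[
K(t)=C\big(1+\norm{\phi_1(t)}_{\Hx2}+\norm{\phi_2(t)}_{\Hx2}+\norm{\sigma_2(t)}_{\Hx1}\big)^4 \in \Lt\infty .
\]
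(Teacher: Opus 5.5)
Your proposal is correct and follows essentially the same route as the paper. You bound each term of $\tilde f_1,\tilde f_2$ separately and use the same four-piece chain-rule expansion of $\Delta F'(\phi_1)-\Delta F'(\phi_2)$, together with the uniform $\Cqt0$ bounds on $\phi_i$ and the embeddings $H^2(\Omega)\hookrightarrow L^\infty(\Omega)$, $H^2(\Omega)\hookrightarrow W^{1,4}(\Omega)$; as in the paper, this yields a $K(t)$ built from $\norm{\phi_i(t)}_{\Hx2}$ and $\norm{\sigma_2(t)}$, which lies in $L^\infty(0,T)$ by the strong estimates.
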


    \begin{proof}
        By Young's inequality, we can start by estimating
        \begin{align*}
            & \norm{\tilde{f}_1}^2_{\Lx2} + \norm{\tilde{f}_2}^2_{\Lx2} \\
            & \quad \le C \Big( \norm{\Delta(F'(\phi_1)) - \Delta(F'(\phi_2))}^2_{\Lx2} \\
            & \qquad + 2 \norm{P(\phi_1)(\sigmatil - \chi \phitil + \Delta \phitil - (F'(\phi_1) - F'(\phi_2)) + \chi \sigmatil)}^2_{\Lx2} \\
            & \qquad + 2 \norm{(P(\phi_1) - P(\phi_2))(\sigma_2 + \chi (1 - \phi_2) + \Delta \phi_2 - F'(\phi_2) + \chi \sigma_2)}^2_{\Lx2} \\
            &\qquad + \norm{c (\hh(\phi_1) - \hh(\phi_2))}^2_{\Lx2} + \kappa^2 \norm{\sigmatil}^2_{\Lx2} \Big)\\
            & \quad = C \left(I_1 + I_2 + I_3 + I_4 + \norm{\sigmatil}^2_{\Lx2} \right).
        \end{align*}
        Now, by using the local Lipschitz continuity of $F''$ and $F'''$, guaranteed by $F \in C^4$, the fact that $\phi_1$ and $\phi_2$ are globally bounded by Remark \ref{rmk:fderivatives}, as well as Young and H\"older's inequalities, we see that
        \begin{align*}
            I_1 & \leq C \Big( \norm{F''(\phi_1) \Delta \phitil}^2_{\Lx2} + \norm{(F''(\phi_1) - F''(\phi_2)) \Delta \phi_2}^2_{\Lx2} \\
            & \quad + \norm{F'''(\phi_1) (\nabla \phi_1 + \nabla \phi_2) \cdot \nabla \phitil}^2_{\Lx2} +
            \norm{(F'''(\phi_1) - F'''(\phi_2)) \nabla \phi_2 \cdot \nabla \phi_2}^2_{\Lx2} \Big) \\
            & \le C \Big( \norm{F''(\phi_1)}^2_{\Lx\infty} \norm{\Delta \phitil}^2_{\Lx2} + \norm{\Delta \phi_2}^2_{\Lx2} \norm{\phitil}^2_{\Lx\infty} \\
            & \quad + \norm{F'''(\phi_1)}^2_{\Lx\infty} \norm{\nabla \phi_1 + \nabla \phi_2}^2_{\Lx4} \norm{\nabla \phitil}^2_{\Lx4} + \norm{\nabla \phi_2}^4_{\Lx4} \norm{\phitil}^2_{\Lx\infty} \Big) \\
            & \le C \Big( \norm{\Delta \phitil}^2_{\Lx2} + \norm{\Delta \phi_2}^2_{\Lx2} \norm{\phitil}^2_{\Hx2} \\ 
            & \quad + \norm{\phi_1 + \phi_2}^2_{\Hx2} \norm{\phitil}^2_{\Hx2} + \norm{\phi_2}^4_{\Hx2} \norm{\phitil}^2_{\Hx2} \Big) \\
            & \le C \left( 1 + \norm{\phi_2}^2_{\Hx2} + \norm{\phi_1}^2_{\Hx2} + \norm{\phi_2}^4_{\Hx2} \right) \norm{\phitil}^2_{\Hx2},
        \end{align*}
        where we extensively used the Sobolev embeddings $\Hx2 \hookrightarrow \Wx{1,4}$ and $\Hx2 \hookrightarrow \Lx\infty$. 
        Moreover, we also estimate similarly the remaining terms: 
        \begin{align*}
            I_2 & \le 2 \norm{P(\phi_1)}^2_{\Lx\infty} \norm{\sigmatil - \chi \phitil + \Delta \phitil - (F'(\phi_1) - F'(\phi_2)) + \chi \sigmatil}^2_{\Lx2} \\
            & \le C \left( \norm{\sigmatil}^2_{\Lx2} +  \norm{\phitil}^2_{\Lx2} + \norm{\Delta \phitil}^2_{\Lx2} \right) \\
            & \le C \left( \norm{\sigmatil}^2_{\Lx2} + \norm{\phitil}^2_{\Hx2} \right), \\
            I_3 & \le 2 \norm{(P(\phi_1) - P(\phi_2)}^2_{\Lx\infty} \norm{\sigma_2 + \chi (1 - \phi_2) + \Delta \phi_2 - F'(\phi_2) + \chi \sigma_2}^2_{\Lx2} \\
            & \le C \norm{\phitil}^2_{\Lx\infty} \left( 1 + \norm{\sigma_2}^2_{\Lx2} + \norm{\phi_2}^2_{\Lx2} + \norm{\phi_2}^2_{\Hx2} \right) \\
            & \le C \left( 1 + \norm{\sigma_2}^2_{\Lx2} + \norm{\phi_2}^2_{\Hx2} \right) \norm{\phitil}^2_{\Hx2}, \\
            I_4 & \le \norm{c}^2_{\Lqt\infty} \norm{\hh(\phi_1) - \hh(\phi_2)}^2_{\Lx2} \le C \norm{\phitil}^2_{\Lx2}.
        \end{align*}
        Putting it all together, we get that 
        \[ 
            \norm{\tilde{f}_1}^2_{\Lx2} + \norm{\tilde{f}_2}^2_{\Lx2} \le C \left( 1 + \norm{\sigma_2}^2_{\Lx2} + \norm{\phi_1}^2_{\Hx2} + \norm{\phi_2}^4_{\Hx2} \right) \left( \norm{\sigma}^2_{\Lx2} + \norm{\phi}^2_{\Hx2} \right),
        \]
        where 
        \[ 
            K(t) := C \left( 1 + \norm{\sigma_2}^2_{\Lx2} + \norm{\phi_1}^2_{\Hx2} + \norm{\phi_2}^4_{\Hx2} \right) \in \Lt\infty,
        \]
        since $\phi_i \in \LT \infty {\Hx2}$ and $\sigma_i \in \LT \infty {\Lx2}$, $i=1,2$, by Theorem \ref{thm:strongsols}. 
        Then, Lemma \ref{lem:FG} holds with $K$ as above.
    \end{proof}

    Then, by essentially exploiting the general Carleman estimate \eqref{eq:carleman}, which now holds also for the particular system \eqref{eq:phi2}--\eqref{eq:ict2}, we can prove the following results.

    \begin{theorem}
	\label{thm:holder_stab_tum}
	Assume \ref{ass:setting}--\ref{ass:initial}, and let $(\phi_1, \sigma_1)$ and $(\phi_2, \sigma_2)$ be two strong solutions to \eqref{eq:phi}--\eqref{eq:ict} corresponding to two pairs of initial data $({\phi_0}_i, {\sigma_0}_i) \in \Iad$ for $i=1,2$.
	Then, for any $\lambda > 0$ large enough there exists a constant $C_1 > 0$, depending only on $\lambda$, $T$, and the parameters of the system, such that for any $t_0 \in (0,T)$ the following H\"older stability estimate holds:
	\begin{equation}
		\label{eq:holder_stab_tum}
        \begin{split}
		& \norm{(\phi_1(\cdot, t_0), \sigma_1(\cdot, t_0)) - (\phi_2(\cdot, t_0), \sigma_2(\cdot, t_0))}_{\Lx2 \times \Lx2} \\
        & \quad \le C_1 M^{1 - \theta(t_0)} \norm{(\phi_1(\cdot, T), \sigma_1(\cdot, T)) - (\phi_2(\cdot, T), \sigma_2(\cdot, T))}_{\Hx2 \times \Hx1}^{\theta(t_0)},
        \end{split}
	\end{equation}
	if $\norm{(\phi_1(\cdot, T), \sigma_1(\cdot, T)) - (\phi_2(\cdot, T), \sigma_2(\cdot, T))}_{\Hx2 \times \Hx1}$ is sufficiently small, where 
	\begin{equation*}
		\theta(t_0) = \frac{e^{\lambda t_0} - 1}{3e^{\lambda T} - 2 e^{\lambda t_0} - 1} \in (0,1),
	\end{equation*}
    and $M > 0$ is the constant appearing in the definition \eqref{eq:def:iad} of $\Iad$.
    \end{theorem}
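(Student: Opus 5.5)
The plan is to reduce Theorem \ref{thm:holder_stab_tum} to the abstract result Theorem \ref{thm:holder_stab}, applied to the difference system \eqref{eq:phi2}--\eqref{eq:ict2}. We set $u = \phitil$, $v = \sigmatil$, $a = b = \chi \ge 0$, $F = \tilde f_1$, $G = \tilde f_2$, and check the hypotheses of Theorem \ref{thm:holder_stab} one by one.

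\emph{Regularity and boundary conditions.} By Theorem \ref{thm:strongsols}, each $(\phi_i,\sigma_i)$ has the regularity \eqref{eq:hp_reg}, so the differences $\phitil,\sigmatil$ inherit it by linearity. The Neumann conditions \eqref{eq:bct} give $\partial_{\n}\phitil = \partial_{\n}\sigmatil = 0$. For $\partial_{\n}\Delta\phitil$ we use \eqref{eq:mu}: $\partial_{\n}\mu_i = 0$, together with $\partial_{\n} F'(\phi_i) = F''(\phi_i)\partial_{\n}\phi_i = 0$ and $\partial_{\n}\sigma_i = 0$, yields $\partial_{\n}\Delta\phi_i = 0$. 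Hence \eqref{eq:bc} holds, as already noted before Lemma \ref{lem:FG}.

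\emph{Equations and source bound.} The system is satisfied a.e. in $Q_T$ by the derivation preceding Lemma \ref{lem:FG}. Regarding the structure of $F,G$: in \eqref{eq:hp_FG}, $F$ and $G$ are used only through their values along the solution, so we regard $\tilde f_1,\tilde f_2$ as functions of $(x,t,\phitil,\sigmatil)$ with $\phi_2,\sigma_2$ frozen as coefficients. Lemma \ref{lem:FG} supplies exactly \eqref{eq:hp_FG} with $K \in \Lt\infty$. The constant $K_\infty$ depends only on the bounds \eqref{eq:strongnorms_est}, which are uniform over $\Iad$. Therefore the threshold on $\lambda$ in the first step of the proof of Theorem \ref{thm:holder_stab}, namely $\lambda > K_\infty + 1$, and the resulting $C_1$ depend only on $M$, $T$ and the model parameters, not on the specific pair of data.

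\emph{A priori bound.} We need \eqref{eq:hp_cond_holder} for the difference. Since both data lie in $\Iad$, the triangle inequality gives $\norm{(\phitil(0),\sigmatil(0))}_{\Hx2\times\Hx1} \le 2M$. Applying Theorem \ref{thm:holder_stab} with $2M$ in place of $M$ yields \eqref{eq:holder_stab} with constant $C_1 2^{1-\theta(t_0)} \le 2C_1$, and the factor $2$ is absorbed into $C_1$. The smallness requirement on $D$ is inherited verbatim.

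The only delicate point is the uniformity of constants over $\Iad$. Concretely, $K(t)$ in Lemma \ref{lem:FG}, and the $L^\infty$ bounds on $F^{(i)}(\phi_j)$, $P^{(i)}(\phi_j)$ from Remark \ref{rmk:fderivatives}, must depend on the data only through $M$. This follows from \eqref{eq:strongnorms_est}, whose constant depends on the data only through their norms. The nonnegativity of $\chi$ is allowed, and the case $\chi = 0$ is covered by the degenerate case of Theorem \ref{thm:carleman}. No smallness assumption on $\chi$ enters at any step.
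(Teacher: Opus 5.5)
Your proposal is correct and takes the same route as the paper, whose proof simply applies Theorem \ref{thm:holder_stab} to the difference system \eqref{eq:phi2}--\eqref{eq:ict2} and uses Lemma \ref{lem:FG} for the source bound. The points you add are all handled correctly and are left implicit in the paper: the a priori bound $2M$ for the difference of initial data via the triangle inequality, the derivation of $\partial_{\n}\Delta\phitil = 0$, the uniformity of $K$ over $\Iad$, and the observation that \eqref{eq:hp_FG} is only needed along the solution.
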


    \begin{proof}
        The result follows by application of Theorem \ref{thm:holder_stab}, together with Lemma \ref{lem:FG}.
    \end{proof}

    We stress that \eqref{eq:holder_stab_tum} provides a stability estimate for the backward problem for any positive time $t_0 \in (0,T)$, but does not immediately give any new information at time $t_0 = 0$, as the H\"older exponent degenerates. 
    However, we can still get a backward uniqueness result by continuity, as done in the following Corollary.

    \begin{corollary}
	\label{cor:backuniq}
    	Assume \ref{ass:setting}--\ref{ass:initial}, and let $(\phi_1, \sigma_1)$ and $(\phi_2, \sigma_2)$ be two strong solutions to \eqref{eq:phi}--\eqref{eq:ict} corresponding to two pairs of initial data $({\phi_0}_i, {\sigma_0}_i) \in \Iad$ for $i=1,2$.
    	
    	If $(\phi_1(\cdot,T), \sigma_1(\cdot,T)) = (\phi_2(\cdot,T), \sigma_2(\cdot,T))$, then $(\phi_1(\cdot,t), \sigma_1(\cdot,t)) = (\phi_2(\cdot,t), \sigma_2(\cdot,t))$ for any $t \in [0,T]$. In particular, $({\phi_0}_1, {\sigma_0}_1) = ({\phi_0}_2, {\sigma_0}_2)$ in $\Hx2 \times \Hx1$.
    \end{corollary}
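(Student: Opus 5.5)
The plan is to apply Theorem \ref{thm:holder_stab_tum} with vanishing final data, and then pass to the limit $t \to 0^+$ using time continuity. Alternatively, one can go directly through the weighted estimate \eqref{eq:holdstab:est3}, which avoids the smallness requirement on the final observation.

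First, set $\phitil = \phi_1-\phi_2$ and $\sigmatil = \sigma_1-\sigma_2$. These solve \eqref{eq:phi2}--\eqref{eq:ict2}, which has the structure of \eqref{eq:u}--\eqref{eq:bc} with $a=b=\chi$. The regularity \eqref{eq:hp_reg} holds by Theorem \ref{thm:strongsols}, and the growth condition \eqref{eq:hp_FG} holds by Lemma \ref{lem:FG}. The initial data satisfy $\norm{(\phitil(0),\sigmatil(0))}_{\Hx2\times\Hx1}\le 2M$, since both pairs lie in $\Iad$.

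By hypothesis, $D := \norm{(\phitil(T),\sigmatil(T))}_{\Hx2\times\Hx1} = 0$, so the smallness assumption is trivially satisfied. To avoid any ambiguity in choosing $s_*$ when $D=0$, I would go back to \eqref{eq:holdstab:est5} in the proof of Theorem \ref{thm:holder_stab}. With $D=0$ and $t_0\in(0,T)$ fixed, it gives
\[
\norm{(\phitil(t_0),\sigmatil(t_0))}^2_{\Lx2\times\Lx2}\le C s (2M)^2 e^{-2sp}, \qquad p=e^{\lambda t_0}-1>0,
\]
for all $s$ large. Letting $s\to\infty$ forces $\phitil(t_0)=\sigmatil(t_0)=0$ in $L^2$. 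The same argument covers $t_0 = T$ trivially, so both differences vanish for every $t\in(0,T]$.

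Second, I would extend to $t=0$ by continuity. By Theorem \ref{thm:strongsols}, $\phitil\in C^0([0,T];\Hx2)$ and $\sigmatil\in C^0([0,T];\Hx1)$. Since the differences vanish on $(0,T]$, taking $t\to0^+$ yields $\phitil(0)=0$ in $\Hx2$ and $\sigmatil(0)=0$ in $\Hx1$, that is, ${\phi_0}_1={\phi_0}_2$ and ${\sigma_0}_1={\sigma_0}_2$. Equality at intermediate times also holds in the stronger $\Hx2\times\Hx1$ norms, because a function that vanishes in $L^2$ vanishes identically.

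The only delicate point is that Theorem \ref{thm:holder_stab} is phrased for small but nonzero final data, because $s_*$ in \eqref{eq:holdstab:sstar} involves $\log(M/D)$. Using the pre-optimisation inequality \eqref{eq:holdstab:est5} directly, with $s\to\infty$, removes this issue. A second option is to read \eqref{eq:holder_stab_tum} with $D=0$ as giving a right-hand side of zero, since the estimate holds for every sufficiently small $D>0$ with a constant $C_1$ independent of $D$.
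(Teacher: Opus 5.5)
Your proposal is correct and follows the paper's argument: apply the H\"older estimate \eqref{eq:holder_stab_tum} with zero final data to get that the differences vanish on $(0,T]$, then pass to $t=0$ using time continuity of the strong solutions. Your extra step of returning to \eqref{eq:holdstab:est5} and letting $s\to\infty$ tightens a point the paper glosses over, since $s_*$ is undefined when $D=0$. Your observation that the difference of the initial data is bounded by $2M$ rather than $M$ is likewise a small correction the paper leaves implicit.
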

    
    \begin{proof}
        Observe that, if $(\phi_1(\cdot,T), \sigma_1(\cdot,T)) = (\phi_2(\cdot,T), \sigma_2(\cdot,T))$, then estimate \eqref{eq:holder_stab_tum} gives that 
        \[ 
            \norm{(\phi_1(\cdot,t), \sigma_1(\cdot,t)) - (\phi_2(\cdot,t), \sigma_2(\cdot,t))}_{\Lx2 \times \Lx2} = 0
        \]
        for any $t \in (0,T]$.
        The result now easily follows also for $t=0$, by exploiting the continuity of the function $t \to \norm{(\phi_1(\cdot,t), \sigma_1(\cdot,t)) - (\phi_2(\cdot,t), \sigma_2(\cdot,t))}_{\Lx2 \times \Lx2}$, which is guaranteed by the $\CT 0 {\Lx2}$-regularity of the solutions. 
    \end{proof}

    \begin{remark}
        This backward uniqueness result improves the previous one obtained in \cite[Theorem 4.3]{ABCFR2025} by means of the logarithmic convexity method. 
        Indeed, here we no longer require an additional smallness constraint on the chemotaxis coefficient $\chi$, beyond the standard one needed for the existence of a solution (cf. \ref{ass:fbelow}).
        This is due to our new approach by Carleman estimates, which does not require any additional assumption on the cross-diffusion terms. 
        Moreover, this new approach also allows us to get   some explicit stability estimates for the backward problem, as we show below.
    \end{remark}

    \begin{theorem}
	\label{thm:log_stab_tum}
	Assume \ref{ass:setting}--\ref{ass:initial}, and let $(\phi_1, \sigma_1)$ and $(\phi_2, \sigma_2)$ be two strong solutions to \eqref{eq:phi}--\eqref{eq:ict} corresponding to two pairs of initial data $({\phi_0}_i, {\sigma_0}_i) \in \Iad$ for $i=1,2$.
	Let $M > 0$, $\lambda > 0$ and $C_1 > 0$ be as in Theorem \ref{thm:holder_stab_tum}.
	Furthermore, let $M_1 > 0$ be as in \eqref{eq:def:m1}.
	Then, there exists a constant $C_2 > 0$ such that
	\begin{equation}
		\label{eq:log_stab_tum}
        \begin{split}
		& \norm{({\phi_0}_1, {\sigma_0}_1) - ({\phi_0}_2, {\sigma_0}_2)}_{\Lx2 \times \Lx2} \\
        & \quad \le C_2 \left( \log \frac{M}{\norm{(\phi_1(\cdot, T), \sigma_1(\cdot, T)) - (\phi_2(\cdot, T), \sigma_2(\cdot, T))}_{\Hx2 \times \Hx1}} \right)^{-\frac14},
        \end{split}
	\end{equation}
	if $\norm{(\phi_1(\cdot, T), \sigma_1(\cdot, T)) - (\phi_2(\cdot, T), \sigma_2(\cdot, T))}_{\Hx2 \times \Hx1}$ is sufficiently small, where the constant $C_2$ can be quantified as 
	\[
		C_2 = \frac{2 M_1^{3/4} C_1^{1/4}}{\beta^{1/4}}, \quad \text{with} \quad \beta = \frac{\lambda}{3e^{\lambda T} - 3}.
	\]
    \end{theorem}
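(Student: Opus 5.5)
The plan is to reduce Theorem~\ref{thm:log_stab_tum} to the abstract logarithmic estimate of Theorem~\ref{thm:log_stab}, applied to the difference system \eqref{eq:phi2}--\eqref{eq:ict2}. We take $u = \phitil$, $v = \sigmatil$, $a = b = \chi$, $F = \tilde f_1$, $G = \tilde f_2$, and then check the hypotheses of Theorem~\ref{thm:log_stab} one by one.

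\emph{Regularity \eqref{eq:hp_reg}.} This follows from Theorem~\ref{thm:strongsols}, since each $\phi_i$, $\sigma_i$ has the required regularity and the difference inherits it.

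\emph{Boundary conditions \eqref{eq:bc}.} Here $\partial_{\n}\Delta\phitil = 0$ follows from $\partial_{\n}\mu_i = 0$, $\partial_{\n}\phi_i = 0$ and $\partial_{\n}\sigma_i = 0$, because $\partial_{\n} F'(\phi_i) = F''(\phi_i)\partial_{\n}\phi_i = 0$.

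\emph{Structural bound \eqref{eq:hp_FG}.} This is exactly Lemma~\ref{lem:FG}. Its $K \in \Lt\infty$ depends only on uniform bounds of the strong solutions, and these are controlled in terms of $M$ through \eqref{eq:strongnorms_est}, since the data lie in $\Iad$.

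\emph{Conditional bounds.} Assumption \eqref{eq:hp_cond_holder} holds with constant $2M$, because $\|({\phi_0}_1-{\phi_0}_2,{\sigma_0}_1-{\sigma_0}_2)\|_{\Hx2\times\Hx1} \le 2M$ by the triangle inequality. The factor $2$ is harmless: it can be absorbed into $C_1$, which is the same choice already made in Theorem~\ref{thm:holder_stab_tum}. For \eqref{eq:hp_cond_log}, the definition \eqref{eq:def:m1} of $M_1$ gives $\|(\phitil,\sigmatil)\|_{\HT1{\Lx2}\times\HT1{\Lx2}} \le 2M_1$. Again the factor can be absorbed, or we can simply reinterpret $M_1$ as the bound for differences, consistently with how it is used in the statement.

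With the hypotheses in place, I would follow the proof of Theorem~\ref{thm:log_stab} verbatim, feeding in Theorem~\ref{thm:holder_stab_tum} as the Hölder input at every $t\in(0,T)$. The steps are:
\begin{enumerate}[(i)]
\item Use the fundamental theorem of calculus on $t\mapsto \|(\phitil,\sigmatil)(t)\|^2_{\Lx2\times\Lx2}$.
\item Apply Cauchy--Schwarz together with the $\HT1{\Lx2}$ bound $M_1$.
\item Use the lower bound $\theta(t)\ge \beta t$ with $\beta = \lambda/(3e^{\lambda T}-3)$.
\item Compute $\int_0^t \eps^{\beta s}\,\de s$ explicitly.
\item Minimise the resulting $g(t)$ over $t\in[0,T]$.
\end{enumerate}
This produces the stated $C_2 = 2M_1^{3/4}C_1^{1/4}\beta^{-1/4}$ and the exponent $-1/4$.

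The only delicate point is the bookkeeping of constants. We must make sure that $\lambda$, $C_1$ and the smallness threshold on $\|(\phitil,\sigmatil)(T)\|$ depend only on $M$, $T$ and the model parameters, and not on the particular pair of solutions. This holds because $K_\infty = \|K\|_{\Lt\infty}$ is uniform over $\Iad$, so the choice of $\lambda$ in the first step of Theorem~\ref{thm:holder_stab} is uniform as well. We also need the minimiser $\bar t$ to fall in $[0,T]$, which is true for $\eps$ small, exactly as in the abstract proof.
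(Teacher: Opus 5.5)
Your reduction is exactly the paper's: its proof of Theorem~\ref{thm:log_stab_tum} is the one line ``apply Theorem~\ref{thm:log_stab} together with Lemma~\ref{lem:FG}''. Your checks of \eqref{eq:hp_reg}, \eqref{eq:bc} and \eqref{eq:hp_FG} are correct. You are also more careful than the paper about the factor $2$ in the a priori bounds. For $M$ it is harmless, because $\log(2M/D)\ge\log(M/D)$. For $M_1$ it multiplies $C_2$ by $2^{3/4}$, unless $M_1$ is redefined as a bound for differences.

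However, the point you file under bookkeeping hides a genuine gap, and the paper's own proof of Theorem~\ref{thm:log_stab} has the same gap. You feed the H\"older bound at \emph{every} $s\in(0,\bar t)$ for one fixed $\eps$, but the smallness threshold in Theorem~\ref{thm:holder_stab} (hence in Theorem~\ref{thm:holder_stab_tum}) is not uniform in $t_0$. Its third step needs $s_*\le e^{s_* p}$, with $p=e^{\lambda t_0}-1\approx\lambda t_0$ and $s_*=\frac{1}{3q+p}\log\frac{M^2}{D^2}\approx\frac{2\beta}{\lambda}\abs{\log\eps}$. This holds only for $t_0\gtrsim\frac{\log\abs{\log\eps}}{2\beta\abs{\log\eps}}$. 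That is the same order as the minimiser $\bar t\approx\frac{\log\abs{\log\eps}}{2\beta\abs{\log\eps}}$. So essentially the whole interval $(0,\bar t)$, and possibly $\bar t$ itself, lies where the H\"older estimate has not been established. Uniformity over $\Iad$, which you do check, is not the issue; uniformity in time is. Falling back on the trivial bound $\norm{(\phitil,\sigmatil)(s)}_{\Lx2\times\Lx2}\le CM_1$ on that interval loses a factor $(\log\abs{\log\eps})^{1/4}$, so it does not give the stated rate.

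The repair is cheap, but it has to be made in Theorem~\ref{thm:carleman}, not in the logarithmic argument. The factor $s$ multiplying the $t=0$ data in \eqref{eq:holdstab:est2} comes only from taking absolute values of $B_1$ and $B_3$. In the proof of Theorem~\ref{thm:carleman}, their $t=0$ parts enter the right-hand side as nonpositive multiples of $s\phil'(0)\norm{y(0)}^2_{\Lx2}$ and $s\phil'(0)\norm{z(0)}^2_{\Lx2}$, since $\phil'(0)=\lambda>0$. So they can simply be dropped, and the $t=0$ parts of $B_2,B_4,B_5,B_6,B_7$ carry no $s$. This gives $\norm{(u,v)(t_0)}^2_{\Lx2\times\Lx2}\le C\bigl(M^2e^{-2sp}+sD^2e^{2sq}\bigr)$, and now only $s\le e^{sq}$ is needed. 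For $t_0\in(0,T/2]$ one has $q\ge e^{\lambda T}-e^{\lambda T/2}$, so $C_1$ and the smallness threshold become independent of $t_0$. Since $\bar t\to0$, your steps (i)--(v) then go through.

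Finally, your claim that this ``produces the stated $C_2$'' holds only up to factors the paper also drops:
\begin{itemize}
\item the bound $\norm{(u,v)(s)}^2\le M_1\cdot C_1M\eps^{\theta(s)}$ leaves an extra $M^{1/4}$ in $C_2$;
\item passing from the $\HT1{\Lx2}$ bound \eqref{eq:def:m1} to a $\CT0{\Lx2}$ bound costs a $T$-dependent embedding constant.
\end{itemize}
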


    \begin{proof}
        The result follows by application of Theorem \ref{thm:log_stab}, together with Lemma \ref{lem:FG}.
    \end{proof}

    \begin{remark}
    	We observe that this type of logarithmic estimate can be impractical in applications, as the error actually gets small only if the final data are very close to each other.  
        Moreover, it can be easily seen that the stability constant $C_2$ deteriorates exponentially with the final time $T$, making this stability estimate even more unreliable. 
        Nevertheless, this is expected since we are dealing with a backward problem for a parabolic system, which is known to be severely ill-posed as $T$ gets larger.
    \end{remark}

    \subsection{Further stability results}
    
    We now want to improve this stability estimate to one of Lipschitz type, under the additional a priori assumption that the initial data to be reconstructed actually lives in a finite-dimensional subspace of $\Hx2 \times \Hx1$, for instance, one of the discrete spaces used in numerical approximations. 
    To do this, we apply a general result, which was proved in \cite[Proposition 5]{BV2006}. 
    We note that the same result was successfully used to get Lipschitz stability estimates for other inverse ill-posed problems, like in \cite{AV2005, ABFV2022}, as well as for a similar backward problem on a specific prostate tumour growth model in \cite{BCFLR2024}.
    Proving this kind of refined Lipschitz stability estimate is crucial, as it paves the way to the implementation of robust numerical algorithms for the reconstruction (cf. \cite{BCFLR2025} and references therein).
    The main requirements to apply \cite[Proposition 5]{BV2006} are the $C^1$-regularity (in the sense of Fr\'echet) and the local injectivity of the forward map $\Rcal$.
    We devote the rest of this section to a brief exposition on how to show the validity of these properties, as their derivation is essentially standard once a strong well-posedness result (cf.~Theorem \ref{thm:strongsols}), the contributions contained in \cite[Section 5]{ABCFR2025} and the analysis carried out in Section \ref{sec:stability} are available.
    In particular, regarding the $C^1$-regularity of the forward map, we also refer the reader to \cite{CRW2021, CGRS2017, CSS2021_secondorder}, where the authors performed similar computations on a highly related system in the context of optimal control problems. 

    \subsubsection{Regularity of the forward map}

    By Theorem \ref{thm:strongsols}, we already know that $\Rcal$ is Lipschitz continuous. 
    Our aim is now to show that $\Rcal$ is continuously Fr\'echet differentiable. 
    First, as an \emph{ansatz} for the Fr\'echet derivative, we introduce the corresponding linearised system to \eqref{eq:phi}--\eqref{eq:ict}, which takes the form
    \begin{alignat}{2}
    	& \partial_t \xi - \Delta \eta 
    	= P'(\phib) (\sigmab + \chi (1 -\phib) - \mub) \xi + P(\phib) (\rho - \chi \xi - \eta) - \hh'(\phib) c \, \xi 
    	\qquad && \hbox{in  $Q_T$},  \label{eq:xi} \\
    	& \eta 
    	= - \Delta \xi + F''(\phib) \xi - \chi \rho 
    	\qquad && \hbox{in  $Q_T$},  \label{eq:eta} \\
    	& \partial_t \rho - \Delta \rho + \chi \Delta \xi
    	= - P'(\phib) (\sigmab + \chi (1 -\phib) - \mub) \xi - P(\phib) (\rho - \chi \xi - \eta) - \kappa \rho 
    	\qquad && \hbox{in  $Q_T$}, \label{eq:rho} \\
    	& \partial_{\n} \xi = \partial_{\n} \eta = \partial_{\n} \rho = 0 
    	\qquad && \hbox{on  $\Sigma_T$}, \label{eq:bcl} \\
    	& \xi(0) = h, \quad \rho(0) = k 
    	\qquad && \hbox{in  $\Omega$}, \label{eq:icl}
    \end{alignat}
    where $(h,k) \in \Hx2 \times \Hx1$ are increments and $(\phib, \mub, \sigmab)$ is the strong solution to \eqref{eq:phi}--\eqref{eq:ict}.
    Then, we state a strong well-posedness result for the linearised system \eqref{eq:xi}--\eqref{eq:icl}.
    
    \begin{proposition}
    	\label{prop:linearised}
    	Assume hypotheses \ref{ass:setting}--\ref{ass:initial}. Let $(\phib, \mub, \sigmab)$ be the strong solution to \eqref{eq:phi}--\eqref{eq:ict}, corresponding to some $(\phi_0, \sigma_0) \in \Iad$. Then, for any $(h,k) \in H^2_N(\Omega) \times \Hx1$, the system \eqref{eq:xi}--\eqref{eq:icl} admits a unique strong solution, which satisfies the system almost everywhere and is uniformly bounded in the following spaces
    	\begin{align*}
    		& \xi \in H^1(0,T;\Lx2) \cap C^0([0,T]; \Hx2) \cap L^2(0,T;\Hx4), \\
    		& \eta \in L^2(0,T;\Hx2), \\
    		& \rho \in H^1(0,T;\Lx2) \cap C^0([0,T]; \Hx1) \cap L^2(0,T;\Hx2).
    	\end{align*}
    \end{proposition}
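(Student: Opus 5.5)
Since \eqref{eq:xi}--\eqref{eq:icl} is linear in $(\xi,\eta,\rho)$ and the coefficients depending on $(\phib,\mub,\sigmab)$ are controlled by Theorem \ref{thm:strongsols} and Remark \ref{rmk:fderivatives}, the plan is to argue by a Faedo--Galerkin scheme with a priori estimates. Concretely, we substitute \eqref{eq:eta} into \eqref{eq:xi} to get a fourth-order equation for $\xi$ coupled to \eqref{eq:rho}, exactly of the form \eqref{eq:u}--\eqref{eq:bc} with $a=b=\chi$:
\[
\partial_t\xi+\Delta^2\xi+\chi\Delta\rho=\Delta\bigl(F''(\phib)\xi\bigr)+\ell_1(\xi,\rho,\Delta\xi),\qquad
\partial_t\rho-\Delta\rho+\chi\Delta\xi=\ell_2(\xi,\rho,\Delta\xi),
\]
where $\ell_1,\ell_2$ are linear with coefficients $P(\phib)$, $P'(\phib)(\sigmab+\chi(1-\phib)-\mub)$, $\hh'(\phib)c$, $\kappa$. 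We approximate using eigenfunctions of the Neumann Laplacian (which automatically give $\partial_{\n}\xi=\partial_{\n}\Delta\xi=\partial_{\n}\rho=0$). Existence of the finite-dimensional solutions follows from Carath\'eodory's theorem for linear ODEs with $L^2$-in-time coefficients. Uniqueness is then immediate from linearity and the lowest-order estimate below applied to the difference.

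The a priori estimates are organised in increasing regularity. \emph{First}, we test the $\xi$-equation by $\Delta^2\xi$ (equivalently, take the $\Hx2$-energy via $\Delta\xi$) and the $\rho$-equation by $-\Delta\rho$. Adding these with suitable weights, the cross terms $\chi\int\Delta\rho\,\Delta^2\xi$ and $\chi\int\Delta\xi\,\Delta\rho$ are handled by Young's inequality against the dissipative terms $\norm{\Delta^2\xi}^2$ and $\norm{\Delta\rho}^2$. The leftover $\chi^2\norm{\Delta\rho}^2$ and $\chi^2\norm{\Delta\xi}^2$ pieces are absorbed by choosing the weight on the $\rho$-energy large relative to $\chi^2$ and by interpolation $\norm{\Delta\xi}^2\le\delta\norm{\Delta^2\xi}^2+C_\delta\norm{\xi}^2$, so no smallness of $\chi$ is needed.

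The delicate term is $\Delta(F''(\phib)\xi)=F''(\phib)\Delta\xi+2F'''(\phib)\nabla\phib\cdot\nabla\xi+(F^{(4)}(\phib)|\nabla\phib|^2+F'''(\phib)\Delta\phib)\xi$. We bound it in $L^2$ by $C(1+\norm{\phib}_{\Hx2}^2)\norm{\xi}_{\Hx2}$, using $\Hx2\hookrightarrow W^{1,4}\cap L^\infty$ as in Lemma \ref{lem:FG}. The term $P(\phib)\eta$ inside $\ell_1$ introduces $\mub\xi$ and $\Delta\xi$; these are controlled by $\norm{\mub}_{\Hx2}\in L^2(0,T)$ times $\norm{\xi}_{\Hx2}$. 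Altogether we obtain
\[
\ddt\bigl(\norm{\xi}^2_{\Hx2}+\norm{\rho}^2_{\Hx1}\bigr)+\tfrac12\bigl(\norm{\Delta^2\xi}^2+\norm{\Delta\rho}^2\bigr)\le \Lambda(t)\bigl(\norm{\xi}^2_{\Hx2}+\norm{\rho}^2_{\Hx1}\bigr),
\]
with $\Lambda\in L^1(0,T)$. Gronwall's lemma then gives bounds in $L^\infty(\Hx2)\times L^\infty(\Hx1)$ and in $L^2(\Hx4)\times L^2(\Hx2)$, the latter via elliptic regularity for the Neumann problem on the $C^4$ domain. Reading off from the equations gives $\partial_t\xi,\partial_t\rho\in L^2(\Lx2)$, and from \eqref{eq:eta} we get $\eta\in L^2(\Hx2)$. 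All bounds depend only on $M$ and the model parameters.

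Finally, we pass to the limit by weak/weak-$*$ compactness. Linearity makes the limit trivial, and the continuity $C^0([0,T];\Hx2)$, $C^0([0,T];\Hx1)$ follows from the Lions--Magenes interpolation lemma. The main obstacle I expect is the first-order estimate: controlling $\mub$ (only $L^2(\Hx2)$ in time) and the $\Delta(F''(\phib)\xi)$ term so that $\Lambda$ is integrable, while absorbing the cross-diffusion terms without any restriction on $\chi$. This works only with a careful choice of test functions and weights, along the lines of \cite[Section 5]{ABCFR2025}.
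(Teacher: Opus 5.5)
Your argument is correct, but it is organised differently from the paper's. The paper does not build the strong solution from scratch. It starts from the unique weak solution of \eqref{eq:xi}--\eqref{eq:icl} given by \cite[Theorem 5.3]{ABCFR2025}, whose estimate already provides $\xi\in L^2(0,T;H^2(\Omega))$, $\eta\in L^2(0,T;L^2(\Omega))$ and $\rho\in L^\infty(0,T;L^2(\Omega))$. It then bootstraps in two decoupled steps. First it tests \eqref{eq:rho} by $\partial_t\rho-\Delta\rho$, treating $\chi\Delta\xi$ and $P(\phib)\eta$ as known $L^2(Q_T)$ forcings, which gives the strong bounds on $\rho$ from the weak estimate alone. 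Then it tests the fourth-order form \eqref{eq:xi2} by $\partial_t\xi+\Delta^2\xi$, where $\chi\Delta\rho$ is now a known $L^2(Q_T)$ source, and obtains $\eta$ by comparison in \eqref{eq:eta}. In this sequential scheme the two cross-diffusion terms never have to be balanced against each other, and $\partial_t\xi,\partial_t\rho$ come out of the energy identities directly.

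Your single coupled $H^2\times H^1$ estimate, with the $\rho$-energy weighted by a constant $A$ (e.g.\ $A=1+4\chi^2$), is a valid alternative. The leftover $A\chi^2\|\Delta\xi\|^2$ is simply a Gronwall term, so you do not even need the interpolation inequality. Your route is self-contained, independent of the weak theory of \cite{ABCFR2025}, and makes explicit why no smallness of $\chi$ is needed.

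Two points to tidy up. First, testing by $\Delta^2\xi$ and $-\Delta\rho$ only controls $\|\Delta\xi\|$ and $\|\nabla\rho\|$, so you must add the elementary estimates from testing with $\xi$ and $\rho$ to get the full norms in your differential inequality. Second, the product $\mub\,\xi$ comes from $P'(\phib)(\sigmab+\chi(1-\phib)-\mub)\xi$, not from $P(\phib)\eta$, which only produces $\Delta\xi$, $F''(\phib)\xi$ and $\rho$. In fact $\mub=-\Delta\phib+F'(\phib)-\chi\sigmab\in L^\infty(0,T;L^2(\Omega))$, so this term is bounded by $C\|\xi\|_{H^2(\Omega)}$ with a bounded coefficient.
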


    \begin{proof}
        The existence of a unique weak solution to \eqref{eq:xi}--\eqref{eq:icl} was shown in \cite[Theorem 5.3]{ABCFR2025}.
        Indeed, it was shown that, if $(\phib, \mub, \sigmab)$ is a weak solution to the forward system \eqref{eq:phi}--\eqref{eq:ict}, there exists a unique weak solution $(\xi, \eta, \rho)$, which satisfies \eqref{eq:xi}--\eqref{eq:icl} in a variational sense and is uniformly bounded in the following spaces
        \begin{align*}
    		& \xi \in H^1(0,T;(H^2_N(\Omega))^*) \cap C^0([0,T]; \Lx2) \cap L^2(0,T;H^2_N(\Omega)), \\
    		& \eta \in L^2(0,T;\Lx2), \\
    		& \rho \in H^1(0,T;(\Hx1)^*) \cap C^0([0,T]; \Lx2) \cap L^2(0,T;\Hx1),
    	\end{align*}
        namely there exists a constant $C > 0$, depending only on the parameters of the systems and on $(\phib, \mub, \sigmab)$, such that
        \begin{equation}
            \label{eq:lin:weakest}
            \begin{split}
                & \norm{\xi}_{H^1(0,T;(H^2_N(\Omega))^*) \cap L^\infty(0,T; \Lx2) \cap L^2(0,T;\Hx2)} 
                + \norm{\eta}_{L^2(0,T;\Lx2)} \\
                & \quad + \norm{\rho}_{H^1(0,T;(\Hx1)^*) \cap L^\infty(0,T; \Lx2) \cap L^2(0,T;\Hx1)} 
                \le C \left( \norm{h}_{\Lx2} + \norm{k}_{\Lx2} \right).
            \end{split}
        \end{equation}

        We now assume that $(\phib, \mub, \sigmab)$ is a strong solution to \eqref{eq:phi}--\eqref{eq:ict} corresponding to $(\phi_0, \sigma_0) \in \Iad$ and that $(h,k) \in H^2_N(\Omega) \times \Hx1$, then we show that $(\xi, \eta, \rho)$ enjoy stronger regularity properties. 
        For ease of presentation, we just argue by formal \emph{a priori} estimates.
        However, we stress that the estimates below can be made rigorous in a Galerkin discretisation scheme. 
        
    
        First, we test \eqref{eq:rho} by $\partial_t \rho - \Delta \rho$ to get that 
        \begin{align*}
            & \norm{\partial_t \rho}^2_{\Lx2} + \ddt \norm{\nabla \rho}^2_H + \norm{\Delta \rho}^2_{\Lx2} \\
            & \quad = - \chi (\Delta \xi, \partial_t \rho - \Delta \rho)_{\Lx2} - (P'(\phib) (\sigmab + \chi(1-\phib) - \mub) \xi, \partial_t \rho - \Delta \rho)_{\Lx2} \\
            & \qquad - (P(\phib)(\rho - \chi \xi - \eta), \partial_t \rho - \Delta \rho)_{\Lx2} - \kappa (\rho, \partial_t \rho - \Delta \rho)_{\Lx2}. 
        \end{align*}
        Then, by exploiting H\"older and Young's inequalities and Remark \ref{rmk:fderivatives}, we estimate the terms on the right-hand side and we infer that
        \begin{align*}
            & \norm{\partial_t \rho}^2_{\Lx2} + \ddt \norm{\nabla \rho}^2_{\Lx2} + \norm{\Delta \rho}^2_{\Lx2} \\
            & \quad \le \chi \norm{\Delta \xi}_{\Lx2} \norm{\partial_t \rho - \Delta \rho}_{\Lx2} \\
            & \qquad + \norm{P'(\phib)}_{\Lx\infty} \norm{\sigmab + \chi(1-\phib) - \mub}_{\Lx\infty} \norm{\xi}_{\Lx2} \norm{\partial_t \rho - \Delta \rho}_{\Lx2} \\
            & \qquad + \norm{P(\phib)}_{\Lx\infty} \norm{\rho - \chi \xi - \eta}_{\Lx2} \norm{\partial_t \rho - \Delta \rho}_{\Lx2} 
            + \kappa \norm{\rho}_{\Lx2} \norm{\partial_t \rho - \Delta \rho}_{\Lx2} \\
            & \quad \le \mezzo \norm{\partial_t \rho}^2_{\Lx2} + \mezzo \norm{\Delta \rho}^2_{\Lx2} + C \left( 1 + \norm{\sigmab + \chi(1-\phib) - \mub}^2_{\Lx\infty} \right) \norm{\xi}^2_{\Lx2} \\
            & \qquad + C \norm{\Delta \xi}^2_{\Lx2} + C \norm{\rho}^2_{\Lx2} + C \norm{\eta}^2_{\Lx2}, 
        \end{align*}
        where $\sigmab + \chi(1-\phib) - \mub$ is uniformly bounded in $\LT 2 {\Lx\infty}$, due to the strong regularity given by Theorem \ref{thm:strongsols} and the Sobolev embedding $\Hx2 \hookrightarrow \Lx\infty$. 
        Hence, by integrating on $(0,t)$, for any $t \in (0,T)$, and  applying \eqref{eq:lin:weakest} and elliptic regularity theory, we conclude that  
        \begin{equation}
            \label{eq:linear:rhostrong}
            \norm{\rho}^2_{\HT 1 {\Lx2} \cap \LT \infty {\Hx1} \cap \LT 2 {\Hx2}} \le C \left( \norm{h}^2_{\Lx2} + \norm{k}^2_{\Hx1} \right).
        \end{equation}
        At this point, by computing explicitly $\Delta \eta$ in strong formulation, we formally rewrite equations \eqref{eq:xi} and \eqref{eq:eta} as 
        \begin{equation}
            \label{eq:xi2}
            \begin{split}
                &\partial_t \xi + \Delta^2 \xi + \chi \Delta \rho 
                = F''(\phib) \Delta \xi + 2 F'''(\phib) \nabla \phib \cdot \nabla \xi + F^{(4)}(\phib) \nabla \phib \cdot \nabla \phib \, \xi \\
                & \quad + F'''(\phib)\xi \Delta \phib + P'(\phib) (\sigmab + \chi(1-\phib) - \mub) \xi + P(\phib)(\rho - \chi \xi - \eta) - \hh'(\phib) c \, \xi. 
            \end{split} 
        \end{equation}
        Then, we test \eqref{eq:xi2} by $\partial_t \xi + \Delta^2 \xi$ and we get
        \begin{align*}
            & \norm{\partial_t \xi}^2_{\Lx2} + \ddt \norm{\Delta \xi}^2_{\Lx2} + \norm{\Delta^2 \xi}^2_{\Lx2} = - \chi (\Delta \rho, \partial_t \xi + \Delta^2 \xi)_{\Lx2} \\
            & \quad + (F''(\phib) \Delta \xi + 2 F'''(\phib) \nabla \phib \cdot \nabla \xi + F^{(4)}(\phib) \nabla \phib \cdot \nabla \phib \, \xi + F'''(\phib)\xi \Delta \phib, \partial_t \xi + \Delta^2 \xi)_{\Lx2} \\ 
            & \quad + (P'(\phib) (\sigmab + \chi(1-\phib) - \mub) \xi + P(\phib)(\rho - \chi \xi - \eta) - \hh'(\phib) c \xi, \partial_t \xi + \Delta^2 \xi)_{\Lx2}
        \end{align*}
        We now proceed to estimate the terms on the right-hand side. 
        For the first one, we use Cauchy--Schwarz and Young's inequalities to infer that
        \[
            \chi (\Delta \rho, \partial_t \xi + \Delta^2 \xi)_{\Lx2} \le \frac{1}{4} \norm{\partial_t \xi}^2_{\Lx2} + \frac{1}{4} \norm{\Delta^2 \xi}^2_{\Lx2} + C \norm{\Delta \rho}^2_{\Lx2},
        \]
        where, by \eqref{eq:linear:rhostrong}, we know that $\rho$ is uniformly bounded in $\LT 2 {\Hx2}$. 
        Next, for the second term, we exploit H\"older and Young's inequalities and the Sobolev embeddings $\Hx2 \hookrightarrow \Wx{1,4}$ and $\Hx2 \hookrightarrow \Lx\infty$, together Remark \ref{rmk:fderivatives}, to deduce that 
        \begin{align*}
            & (F''(\phib) \Delta \xi + 2 F'''(\phib) \nabla \phib \cdot \nabla \xi + F^{(4)}(\phib) \nabla \phib \cdot \nabla \phib \, \xi + F'''(\phib)\xi \Delta \phib, \partial_t \xi + \Delta^2 \xi)_{\Lx2} \\
            & \quad \le \Big( \norm{F''(\phib)}_{\Lx\infty} \norm{\Delta \xi}_{\Lx2} 
            + 2 \norm{F'''(\phib)}_{\Lx\infty} \norm{\nabla \phib}_{\Lx4} \norm{\nabla \xi}_{\Lx4} \\
            & \qquad + \norm{F^{(4)}(\phib)}_{\Lx\infty} \norm{\nabla \phi}^2_{\Lx4} \norm{\xi}_{\Lx\infty} \\
            & \qquad + \norm{F'''(\phib)}_{\Lx\infty} \norm{\Delta \phib}_{\Lx2} \norm{\xi}_{\Lx\infty} \Big) 
            \left( \norm{\partial_t \xi}_{\Lx2} + \norm{\Delta^2 \xi}_{\Lx2} \right) \\
            & \quad \le C \left( \norm{\Delta \xi}_{\Lx2} + \norm{\phib}_{\Hx2} \norm{\xi}_{\Hx2} + \norm{\phib}^2_{\Hx2} \norm{\xi}_{\Hx2} \right) \left( \norm{\partial_t \xi}_{\Lx2} + \norm{\Delta^2 \xi}_{\Lx2} \right) \\
            & \quad \le \frac{1}{4} \norm{\partial_t \xi}^2_{\Lx2} + \frac{1}{4} \norm{\Delta^2 \xi}^2_{\Lx2} + C \left( 1 + \norm{\phib}^2_{\Hx2} + \norm{\phib}^4_{\Hx2} \right) \norm{\xi}^2_{\Hx2},
        \end{align*}
        where we recall that $\phib$ is uniformly bounded in $\LT \infty {\Hx2}$ by Theorem \ref{thm:strongsols}. 
        Finally, we estimate the last term similarly to what was done to deduce \eqref{eq:linear:rhostrong}, indeed we have that
        \begin{align*}
            & (P'(\phib) (\sigmab + \chi(1-\phib) - \mub) \xi + P(\phib)(\rho - \chi \xi - \eta) - \hh'(\phib) c \, \xi, \partial_t \xi + \Delta^2 \xi)_{\Lx2} \\
            & \quad \le \norm{P'(\phib)}_{\Lx\infty} \norm{\sigmab + \chi(1-\phib) - \mub}_{\Lx\infty} \norm{\xi}_{\Lx2} \norm{\partial_t \xi + \Delta^2 \xi}_{\Lx2} \\
            & \qquad + \norm{P(\phib)}_{\Lx\infty} \norm{\rho - \chi \xi - \eta}_{\Lx2} \norm{\partial_t \xi - \Delta \xi}_{\Lx2} \\
            & \qquad - c_\infty \norm{\hh'(\phib)}_{\Lx\infty} \norm{\xi}_{\Lx2} \norm{\partial_t \xi + \Delta^2 \xi}_{\Lx2} \\
            & \quad \le \frac{1}{4} \norm{\partial_t \xi}^2_{\Lx2} + \frac{1}{4} \norm{\Delta^2 \xi}^2_{\Lx2} + C \left( 1 + \norm{\sigmab + \chi(1-\phib) - \mub}^2_{\Lx\infty} \right) \norm{\xi}^2_{\Lx2} \\
            & \qquad + C \norm{\rho}^2_{\Lx2} + C \norm{\eta}^2_{\Lx2},
        \end{align*}
        where again $\norm{\sigmab + \chi(1-\phib) - \mub}^2_{\Lx\infty} \in \Lt1$ by Theorem \ref{thm:strongsols}. 
        Hence, by gathering all estimates and integrating on $(0,t)$, for any $t \in (0,T)$, we obtain the following:
        \begin{align*}
            & \int_0^t \norm{\partial_t \xi}^2_{\Lx2} \, \de s + \frac{1}{4} \norm{\Delta \xi (\cdot,t)}^2_{\Lx2} + \int_0^t \norm{\Delta^2 \xi}^2_{\Lx2} \, \de s \\
            & \quad \le \frac{1}{4} \norm{h}^2_{\Hx2} + C \int_0^T \left( 1 + \norm{\sigmab + \chi(1-\phib) - \mub}^2_{\Lx\infty} \right) \norm{\xi}^2_{\Lx2} \, \de s \\
            & \qquad + C \int_0^T \left( 1 + \norm{\phib}^2_{\Hx2} + \norm{\phib}^4_{\Hx2} \right) \norm{\xi}^2_{\Hx2} \, \de s + C \int_0^T \norm{\rho}^2_{\Hx2} \, \de s + C \int_0^T \norm{\eta}^2_{\Lx2} \, \de s
        \end{align*}
        Therefore, by applying Gronwall's inequality and elliptic regularity theory, together with the previous estimates \eqref{eq:lin:weakest} and \eqref{eq:linear:rhostrong}, we deduce that
        \begin{equation}
            \label{eq:linear:xistrong}
            \norm{\xi}^2_{\HT 1 {\Lx2} \cap \LT \infty {\Hx2} \cap \LT 2 {\Hx4}} \le C \left( \norm{h}^2_{\Hx2} + \norm{k}^2_{\Hx1} \right).
        \end{equation}
        Moreover, by comparison in \eqref{eq:eta} and ellptic regularity theory, the newly-found estimate \eqref{eq:linear:xistrong} easily implies also that
        \[
            \norm{\eta}^2_{\LT 2 {\Hx2}} \le C \left( \norm{h}^2_{\Hx2} + \norm{k}^2_{\Hx1} \right).
        \]
        This concludes the proof of Proposition \ref{prop:linearised}.
    \end{proof}

    Next, we show that the map $\Rcal$ is continuously Fr\'echet differentiable and characterise its derivative as the solution to the linearised system.

    \begin{proposition}
    	\label{prop:frechet}
    	Assume hypotheses \ref{ass:setting}--\ref{ass:initial}. Then, $\Rcal: H^2_N(\Omega) \times \Hx1 \to \Lx2 \times \Lx2$ is Fr\'echet differentiable, namely for any $(\phiob, \sigmaob) \in H^2_N(\Omega) \times \Hx1$ there exists a unique Fr\'echet derivative $\D\Rcal(\phiob, \sigmaob) \in \mathcal{L}(H^2_N(\Omega) \times \Hx1, \Lx2 \times \Lx2)$ such that, as $\norm{(h,k)}_{H^2_N(\Omega) \times \Hx1} \to 0$,
    	\begin{equation*}
    		\frac{ \norm{ \Rcal(\phiob + h, \sigmaob + k) - \mathcal{R}(\phiob, \sigmaob) - \D\mathcal{R}(\phiob, \sigmaob)[h,k]  }_{\Lx2 \times \Lx2}}{ \norm{(h,k)}_{H^2_N(\Omega) \times \Hx1} } \to 0.
    	\end{equation*}
    	Moreover, for any $(h,k) \in H^2_N(\Omega) \times \Hx1$, the Fr\'echet derivative at $(\phiob, \sigmaob)$ in $(h,k)$ is defined as 
    	\[ \D\mathcal{R}(\phiob, \sigmaob)[h,k] = (\xi(\cdot,T), \rho(\cdot,T))  \]
    	where $\xi(\cdot,T)$ and $\rho(\cdot,T)$ are the solutions to the linearised system \eqref{eq:xi}--\eqref{eq:icl} with initial data $(h,k)$, evaluated at the final time. 
    
        Furthermore, the Fr\'echet-derivative $\D\Rcal$ is Lipschitz-continuous as a function from $H^2_N(\Omega) \times \Hx1$ to the space $\mathcal{L}(H^2_N(\Omega) \times \Hx1, \Lx2 \times \Lx2)$. More precisely, the following estimate holds:
        \begin{equation}
            \label{eq:lipconst:frechet}
            \begin{split}
            & \norm{\D\mathcal{R}((\phiob_1, \sigmaob_1)) - \D\mathcal{R}((\phiob_2, \sigmaob_2))}^2_{\mathcal{L}(H^2_N(\Omega) \times \Hx1, \Lx2 \times \Lx2)} \\
            & \quad \le C_0 \norm{(\phiob_1, \sigmaob_1) - (\phiob_2, \sigmaob_2)}^2_{\Hnx2 \times \Hx1},
            \end{split}
        \end{equation}
        for some constant $C_0 > 0$ depending only on the parameters of the system. 
        This implies that the operator $\Rcal$ is of class $C^1$ between $H^2_N(\Omega) \times \Hx1$ and $\Lx2 \times \Lx2$.
    \end{proposition}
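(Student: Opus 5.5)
Fix $(\phiob,\sigmaob)\in H^2_N(\Omega)\times\Hx1$ and a small increment $(h,k)$. Let $(\phi_h,\mu_h,\sigma_h)$ and $(\phib,\mub,\sigmab)$ be the strong solutions from Theorem \ref{thm:strongsols} with data $(\phiob+h,\sigmaob+k)$ and $(\phiob,\sigmaob)$. Let $(\xi,\eta,\rho)$ be the linearised solution from Proposition \ref{prop:linearised} with data $(h,k)$. Set
\[
\psi:=\phi_h-\phib-\xi,\qquad \nu:=\mu_h-\mub-\eta,\qquad \zeta:=\sigma_h-\sigmab-\rho .
\]
The plan is to show
\[
\norm{\psi}_{\LT\infty{\Lx2}}+\norm{\zeta}_{\LT\infty{\Lx2}}\le C\norm{(h,k)}^2_{\Hx2\times\Hx1}.
\]
This gives Fréchet differentiability with $\D\Rcal(\phiob,\sigmaob)[h,k]=(\xi(T),\rho(T))$, which is a bounded linear operator by Proposition \ref{prop:linearised}.

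\textbf{Remainder system.} Subtracting equations, $(\psi,\nu,\zeta)$ solves the linear system \eqref{eq:xi}--\eqref{eq:icl} with zero initial data, plus additional sources. These sources are second-order Taylor remainders, for instance
\[
F'(\phi_h)-F'(\phib)-F''(\phib)(\phi_h-\phib)=\tfrac12F'''(\vartheta)(\phi_h-\phib)^2,
\]
and similar ones for $P(\phi)(\sigma+\chi(1-\phi)-\mu)$ and $c\,\hh(\phi)$. Products such as $(P(\phi_h)-P(\phib))\big((\sigma_h-\sigmab)-(\mu_h-\mub)\big)$ also appear.

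\textbf{Bounding the sources.} By Remark \ref{rmk:fderivatives}, $F\in C^4$ and $P,\hh\in C^2$, so all derivatives up to these orders are bounded on the uniform $L^\infty$ range of the phase fields. Hence each source is pointwise bounded by
\[
C\abs{\phi_h-\phib}\bigl(\abs{\phi_h-\phib}+\abs{\sigma_h-\sigmab}+\abs{\mu_h-\mub}\bigr).
\]
Using $\Hx2\hookrightarrow\Lx\infty$ and the continuous dependence estimate \eqref{eq:contdep_estimate_strong}, these sources are bounded in $L^2(0,T;\Lx2)$ (or in the dual of $H^2_N(\Omega)$) by $C\norm{(h,k)}^2$. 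For example, $\norm{\phi_h-\phib}_{\LT\infty{\Lx\infty}}\norm{\mu_h-\mub}_{\LT2{\Lx2}}\le C\norm{(h,k)}^2$.

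\textbf{Energy estimate.} Next I would repeat the weak energy estimate that gives \eqref{eq:lin:weakest}: test the $\psi$-equation with $\psi$ and with $\nu$ suitably combined, and the $\zeta$-equation with $\zeta$. The source terms are kept on the right-hand side, and Gronwall's lemma closes the argument with zero initial data. The main obstacle is the term $\chi\Delta\zeta$ together with the $\eta/\nu$ coupling. These must be handled exactly as in \cite[Theorem 5.3]{ABCFR2025}, where the constraint $c_1>\chi^2$ from \ref{ass:fbelow} supplies coercivity. I would follow that computation verbatim and only track the new source terms.

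\textbf{Lipschitz continuity of $\D\Rcal$.} Take two base points $(\phiob_i,\sigmaob_i)$, $i=1,2$, the same increment $(h,k)$, and the corresponding linearised solutions $(\xi_i,\eta_i,\rho_i)$. The difference $(\xi_1-\xi_2,\eta_1-\eta_2,\rho_1-\rho_2)$ solves the linearised system around base point $1$, with zero initial data and sources of the form
\[
\bigl(F''(\phib_1)-F''(\phib_2)\bigr)\xi_2,\qquad \bigl(P(\phib_1)-P(\phib_2)\bigr)(\rho_2-\chi\xi_2-\eta_2),
\]
and analogous terms. Each is bounded by $C\norm{\phib_1-\phib_2}_{\Lx\infty}$ times strong norms of $(\xi_2,\eta_2,\rho_2)$, plus terms involving $\sigmab_1-\sigmab_2$ and $\mub_1-\mub_2$ multiplied by $\xi_2\in\LT\infty{\Lx\infty}$. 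Combining \eqref{eq:contdep_estimate_strong} with the strong bounds of Proposition \ref{prop:linearised}, the sources are controlled by $C\norm{(\phiob_1,\sigmaob_1)-(\phiob_2,\sigmaob_2)}\,\norm{(h,k)}$. The same energy estimate as above then yields \eqref{eq:lipconst:frechet} after taking the supremum over $\norm{(h,k)}\le1$. Lipschitz continuity of $\D\Rcal$ gives $\Rcal\in C^1$.
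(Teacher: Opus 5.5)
Your argument for the Lipschitz continuity of $\D\Rcal$ is essentially the paper's. You take the difference of two linearised solutions with the same $(h,k)$ and zero initial data, control the sources through \eqref{eq:contdep_estimate_strong} and the strong bounds of Proposition \ref{prop:linearised}, close an $L^2$-energy estimate with Gronwall, and take the supremum over unit increments.

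For the Fr\'echet differentiability itself you take a different route. The paper does not estimate a remainder at all. It invokes \cite[Theorem 5.4]{ABCFR2025}, which gives differentiability of $\Rcal$ from $H^1(\Omega)\times L^2(\Omega)$ into $L^2(\Omega)\times L^2(\Omega)$ under more restrictive hypotheses on $F$, $P$, $\hh$. It argues that these restrictions can be dropped because strong solutions have a uniformly bounded phase field (Remark \ref{rmk:fderivatives}). It then restricts to $H^2_N(\Omega)\times H^1(\Omega)$ via the continuous embedding and the chain rule.

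Your direct estimate of $(\psi,\nu,\zeta)$ with second-order Taylor sources is self-contained and yields the explicit quadratic remainder bound $O(\norm{(h,k)}^2_{H^2\times H^1})$. It also uses the same mechanism (strong continuous dependence plus an $L^2$ energy estimate) as the Lipschitz part, so both halves of the proposition rest on one computation. The paper's route inherits what the cited result gives, namely differentiability in the weaker $H^1\times L^2$ topology in the setting of \cite{ABCFR2025}. The price is dependence on an external theorem whose extra hypotheses are removed only by a brief remark.

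Two details of your sketch need correcting, though neither breaks it.

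\textbf{Source of coercivity.} Coercivity does not come from $c_1>\chi^2$. That is a lower bound on $F$ itself, used in the energy estimates for the nonlinear problem behind Theorem \ref{thm:strongsols}. In your remainder system $F$ enters only through the bounded coefficient $F''(\phib)$ and the Taylor remainder $R_F$. Test the $\psi$-equation by $\psi$, the $\nu$-equation by $\Delta\psi$ and the $\zeta$-equation by $\zeta$, exactly as the paper does with \eqref{eq:xilip}--\eqref{eq:rholip}. This puts $\norm{\Delta\psi}^2+\norm{\nabla\zeta}^2$ on the left-hand side. The cross-diffusion terms combine into $-2\chi(\zeta,\Delta\psi)$, which Young's inequality absorbs for every $\chi\ge 0$, and $R_F$ appears only as $(R_F,\Delta\psi)$.

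\textbf{Pointwise bound on the sources.} Your bound is not right as stated. The Taylor remainder of $P(\phi)(\sigma+\chi(1-\phi)-\mu)$ contains the term
\begin{equation*}
\tfrac12 P''(\vartheta)(\phi_h-\phib)^2\bigl(\sigmab+\chi(1-\phib)-\mub\bigr).
\end{equation*}
Here $\mub$ lies only in $L^2(0,T;H^2(\Omega))$, not in $L^\infty(Q_T)$, so the constant must be replaced by $C(1+\abs{\sigmab}+\abs{\mub})$. This is harmless: $\norm{\phi_h-\phib}_{L^\infty(Q_T)}\le C\norm{(h,k)}_{H^2\times H^1}$ by \eqref{eq:contdep_estimate_strong} and $H^2(\Omega)\hookrightarrow L^\infty(\Omega)$. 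Hence this term is still $O(\norm{(h,k)}^2)$ in $L^2(Q_T)$.
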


    \begin{proof}
        The Fr\'echet differentiability of $\Rcal$ from the weaker space $\Hx1 \times \Lx2$ to $\Lx2 \times \Lx2$ was shown in \cite[Theorem 5.4]{ABCFR2025}, by using only the weak regularity of the solution to the forward problem, upon assuming some more restrictive hypotheses on the non-linearities $F$, $P$ and $\hh$ (cf.~\cite[Remark 5.1]{ABCFR2025}).
        However, since we can use the full regularity of the strong solution, these restrictions can be lifted, and the same differentiability result holds in our initial set of hypotheses. 
        The key point is that the strong regularity guarantees that the phase variable is uniformly bounded (cf.~Remark \ref{rmk:fderivatives}).
        Moreover, by composing $\Rcal:\Hx1 \times \Lx2\to \Lx2 \times \Lx2$ with the continuous embedding $\Hnx2 \times \Hx1 \hookrightarrow \Hx1 \times \Lx2$, we directly obtain by the chain rule the Fr\'echet differentiability of the forward map restricted to the stronger space $\Rcal:\Hnx2 \times \Hx1\to \Lx2 \times \Lx2$.
        Then, the first claim of Proposition \ref{prop:frechet} is proved, and we just need to prove the Lipschitz continuity of the Fr\'echet derivative.
        This amounts to prove that, given $(\phiob_i, \sigmaob_i)$, $i=1,2$, with corresponding solutions $(\phib_i, \sigmab_i, \mub_i)$, $i=1,2$, it holds that, for any $(h,k) \in \Hnx2 \times \Hx1$ with $\norm{(h,k)}_{\Hnx2 \times \Hx1}=1$,
        \begin{equation}
            \label{eq:lipfrechet:aim}
            \begin{split}
                & \norm{\D\Rcal(\phiob_1, \sigmaob_1)[(h,k)] - \D\Rcal(\phiob_2, \sigmaob_2)[(h,k)]}^2_{\Lx2 \times \Lx2} \\
                & \quad = \norm{(\xi_1(\cdot,T), \rho_1(\cdot,T)) - (\xi_2(\cdot,T), \rho_2(\cdot,T)}^2_{\Lx2 \times \Lx2} \\
                & \quad \le C \norm{(\phiob_1, \sigmaob_1) - (\phiob_2, \sigmaob_2)}^2_{\Hnx2 \times \Hx1},
            \end{split}
        \end{equation}
        which, by taking the supremum over $(h,k) \in \Hnx2 \times \Hx1$ with unitary norm, yields \eqref{eq:lipconst:frechet}.
        Here, $(\xi_i, \eta_i, \rho_i)$, $i=1,2$, denote the correspondind strong solutions to the linearised system \eqref{eq:xi}--\eqref{eq:icl}, with $(\phib_i, \mub_i, \sigmab_i)$, $i=1,2$, respectively, and the same $(h,k)$.

        We, then, consider the system solved by the differences $\xi := \xi_1 - \xi_2$, $\eta := \eta_1 - \eta_2$ and $\rho := \rho_1 - \rho_2$, which upon some reformulations takes the following form
        \begin{alignat}{2}
        	& \partial_t \xi - \Delta \eta \notag \\ 
        	& \quad = P'(\phib_1) (\sigmab_1 + \chi (1 -\phib_1) - \mub_1) \xi \notag \\
            & \qquad + (P'(\phib_1) - P'(\phib_2))(\sigmab_2 + \chi (1 -\phib_2) - \mub_2) \xi_2 \notag \\
            & \qquad + P'(\phib_2)((\sigmab_1 - \sigmab_2) - \chi (\phib_1 - \phib_2) - (\mub_1 - \mub_2)) \xi_2 \notag \\
            & \qquad + P(\phib_1) (\rho - \chi \xi - \eta) 
            + (P(\phib_1) - P(\phib_2))(\rho_2 - \chi \xi_2 - \eta_2) \notag \\
            & \qquad - \hh'(\phib_1) c \, \xi 
            - (\hh'(\phib_1) - \hh'(\phib_2)) c \, \xi_2
        	\qquad && \hbox{in  $Q_T$},  \label{eq:xilip} \\
        	& \eta 
        	= - \Delta \xi + F''(\phib_1) \xi + (F''(\phib_1) - F''(\phib_2)) \xi_2 - \chi \rho 
        	\qquad && \hbox{in  $Q_T$},  \label{eq:etalip} \\
        	& \partial_t \rho - \Delta \rho + \chi \Delta \xi
        	\notag \\
            & \quad = - P'(\phib_1) (\sigmab_1 + \chi (1 -\phib_1) - \mub_1) \xi \notag \\
            & \qquad - (P'(\phib_1) - P'(\phib_2))(\sigmab_2 + \chi (1 -\phib_2) - \mub_2) \xi_2 \notag \\
            & \qquad - P'(\phib_2)((\sigmab_1 - \sigmab_2) - \chi (\phib_1 - \phib_2) - (\mub_1 - \mub_2)) \xi_2 \notag \\
            & \qquad - P(\phib_1) (\rho - \chi \xi - \eta) 
            + (P(\phib_1) - P(\phib_2))(\rho_2 - \chi \xi_2 - \eta_2)
            - \kappa \rho
        	\qquad && \hbox{in  $Q_T$}, \label{eq:rholip} \\
        	& \partial_{\n} \xi = \partial_{\n} \eta = \partial_{\n} \rho = 0 
        	\qquad && \hbox{on  $\Sigma_T$}, \label{eq:bcllip} \\
        	& \xi(0) = 0, \quad \rho(0) = 0 
        	\qquad && \hbox{in  $\Omega$}. \label{eq:icllip}
        \end{alignat}
        Clearly, the system \eqref{eq:xilip}--\eqref{eq:icllip} is satisfied in strong formulation by Proposition \ref{prop:linearised}, and $(\xi, \eta, \rho)$ satisfy the regularities stated therein. 
        Hence, to prove \eqref{eq:lipfrechet:aim}, we need to perform some global estimates. 
        Indeed, we test \eqref{eq:xilip} by $\xi$, \eqref{eq:etalip} by $\Delta \xi$, \eqref{eq:rholip} by $\rho$, and we sum them up to obtain:
        \begin{equation}
        \label{eq:lipfrechet:est}
            \begin{split}
                & \mezzo \ddt \norm{\xi}^2_{\Lx2} 
                + \mezzo \ddt \norm{\rho}^2_{\Lx2}
                + \norm{\Delta \xi}^2_{\Lx2}
                + \norm{\nabla \rho}^2_{\Lx2} \\
                & \quad \le
                (F''(\phib_1)\xi, \Delta \xi)_{\Lx2}
                + ((F''(\phib_1) - F''(\phib_2))\xi_2, \Delta \xi)_{\Lx2} 
                - 2 \chi (\rho, \Delta \xi)_{\Lx2} \\
                & \qquad +(P'(\phib_1) (\sigmab_1 + \chi (1 -\phib_1) - \mub_1) \xi, \xi - \rho)_{\Lx2} \\
                & \qquad + (P'(\phib_1) - P'(\phib_2))(\sigmab_2 + \chi (1 -\phib_2) - \mub_2) \xi_2, \xi - \rho)_{\Lx2} \\
                & \qquad + (P'(\phib_2)((\sigmab_1 - \sigmab_2) - \chi (\phib_1 - \phib_2) - (\mub_1 - \mub_2)) \xi_2, \xi - \rho)_{\Lx2} \\
                & \qquad + (P(\phib_1) (\rho - \chi \xi - \eta), \xi - \rho)_{\Lx2} \\
                & \qquad + ((P(\phib_1) - P(\phib_2))(\rho_2 - \chi \xi_2 - \eta_2), \xi - \rho)_{\Lx2} \\
                & \qquad - (\hh'(\phib_1) c \, \xi, \xi)_{\Lx2} 
                - ((\hh'(\phib_1) - \hh'(\phib_2)) c \, \xi_2, \xi)_{\Lx2} 
                - \kappa \norm{\rho}^2_{\Lx2}.
            \end{split}
        \end{equation}
        We now proceed to estimate all the terms on the right-hand side of \eqref{eq:lipfrechet:est}. 
        In the estimates below we will make repeated use of Cauchy--Schwartz, H\"older and Young's inequalities, as well as Sobolev embeddings.
        For the first three terms, we see that 
        \begin{align*}
            & (F''(\phib_1)\xi, \Delta \xi)_{\Lx2}
                + ((F''(\phib_1) - F''(\phib_2))\xi_2, \Delta \xi)_{\Lx2} 
                - 2 \chi (\rho, \Delta \xi)_{\Lx2} \\
            & \quad \le \frac14 \norm{\Delta \xi}^2_{\Lx2} 
            + C \norm{\rho}^2_{\Lx2} 
            + C \norm{\xi}^2_{\Lx2}
            + C \norm{\xi_2}^2_{\Lx\infty} \norm{\phib_1 - \phib_2}^2_{\Lx2},
        \end{align*}
        where we used Remark \ref{rmk:fderivatives} and the local Lipschitz continuity of $F''$.
        Next, for the fourth, fifth and sixth terms, we have that 
        \begin{align*}
            & (P'(\phib_1) (\sigmab_1 + \chi (1 -\phib_1) - \mub_1) \xi, \xi - \rho)_{\Lx2} \\
            & \quad \le C \left( 1 + \norm{\sigmab_1 + \chi (1 -\phib_1) - \mub_1}^2_{\Lx\infty} \right) \norm{\xi}^2_{\Lx2} + C \norm{\rho}^2_{\Lx2} \\
            & (P'(\phib_1) - P'(\phib_2))(\sigmab_2 + \chi (1 -\phib_2) - \mub_2) \xi_2, \xi - \rho)_{\Lx2} \\
            & \quad \le C \norm{\sigmab_2 + \chi (1 -\phib_2) - \mub_2}^2_{\Lx\infty} \left( \norm{\xi}^2_{\Lx2} + \norm{\rho}^2_{\Lx2} \right) + C \norm{\xi_2}^2_{\Lx\infty} \norm{\phib_1 - \phib_2}^2_{\Lx2} \\
            & (P'(\phib_2)((\sigmab_1 - \sigmab_2) - \chi (\phib_1 - \phib_2) - (\mub_1 - \mub_2)) \xi_2, \xi - \rho)_{\Lx2} \\
            & \quad \le C \norm{\xi}^2_{\Lx2} + C \norm{\rho}^2_{\Lx2} \\
            & \qquad + C \norm{\xi_2}^2_{\Lx\infty} \left( \norm{\phib_1 - \phib_2}^2_{\Lx2} + \norm{\mub_1 - \mub_2}^2_{\Lx2} + \norm{\sigmab_1 - \sigmab_2}^2_{\Lx2} \right),
        \end{align*}
        where we used again Remark \ref{rmk:fderivatives} and the local Lipschitz continuity of $P'$.
        Next, to estimate the seventh term, we first observe that, by \eqref{eq:etalip}, it holds that 
        \begin{align*}
            \norm{\eta}_{\Lx2} \le
            C \norm{\Delta \xi}_{\Lx2} 
            + C \norm{\xi}_{\Lx2} 
            + \norm{\xi_2}_{\Lx\infty} \norm{\phib_1 - \phib_2}_{\Lx2} 
            + C \norm{\rho}_{\Lx2}.
        \end{align*}
        Then, we deduce that 
        \begin{align*}
            & (P(\phib_1) (\rho - \chi \xi - \eta), \xi - \rho)_{\Lx2} \\
            & \quad \le \frac14 \norm{\Delta \xi}^2_{\Lx2} + C \norm{\xi}^2_{\Lx2} + C \norm{\rho}^2_{\Lx2} + C \norm{\xi_2}^2_{\Lx\infty} \norm{\phib_1 - \phib_2}^2_{\Lx2}.
        \end{align*}
        Going on, for the eighth term, we similarly have that
        \begin{align*}
            & ((P(\phib_1) - P(\phib_2))(\rho_2 - \chi \xi_2 - \eta_2), \xi - \rho)_{\Lx2} \\
            & \quad C \norm{\rho_2 - \chi \xi_2 - \eta_2}^2_{\Lx\infty} \left( \norm{\xi}^2_{\Lx2} + \norm{\rho}^2_{\Lx2} \right) + C \norm{\phib_1 - \phib_2}^2_{\Lx2}.
        \end{align*}
        Finally, the ninth and tenth terms can be treated as
        \begin{align*}
            & (\hh'(\phib_1) c \, \xi, \xi)_{\Lx2} 
            + ((\hh'(\phib_1) - \hh'(\phib_2)) c \, \xi_2, \xi)_{\Lx2} \\
            & \quad \le C \norm{\xi}^2_{\Lx2} + C \norm{\xi_2}^2_{\Lx\infty} \norm{\phib_1 - \phib_2}^2_{\Lx2},
        \end{align*}
        by Remark \ref{rmk:fderivatives} and the local Lipschitz continuity of $\hh'$.
        By putting all together and integrating over $(0,t)$, for any $t \in (0,T)$, we conclude that
        \begin{align*}
            & \mezzo \norm{\xi(\cdot,t)}^2_{\Lx2} + \mezzo \norm{\rho(\cdot,t)}^2_{\Lx2} + \mezzo \int_0^t \norm{\Delta \xi}^2_{\Lx2} \, \de s + \int_0^t \norm{\nabla \rho}^2_{\Lx2} \, \de s \\
            & \quad \le C \int_0^T \left( 1 + \norm{\sigmab_1 + \chi (1 -\phib_1) - \mub_1}^2_{\Hx2} + \norm{\rho_2 - \chi \xi_2 - \eta_2}^2_{\Hx2} \right) \left( \norm{\xi}^2_{\Lx2} + \norm{\rho}^2_{\Lx2} \right) \, \de s \\
            & \qquad + C \norm{\xi_2}^2_{\LT \infty {\Hx2}} \int_0^T \left( \norm{\phib_1 - \phib_2}^2_{\Lx2} + \norm{\mub_1 - \mub_2}^2_{\Lx2} + \norm{\sigmab_1 - \sigmab_2}^2_{\Lx2} \right) \, \de s, 
        \end{align*}
        where $\norm{\sigmab_1 + \chi (1 -\phib_1) - \mub_1}^2_{\Hx2} \in \Lt1$, $\norm{\rho_2 - \chi \xi_2 - \eta_2}^2_{\Hx2} \in \Lt1$ and $\xi_2 \in \LT \infty {\Hx2}$ are uniformly bounded by Theorem \ref{thm:strongsols} and Proposition \ref{prop:linearised}. 
        Hence, by applying Gronwall's inequality, together with elliptic regularity theory and the continuous dependence estimate \eqref{eq:contdep_estimate_strong} on the second term on the right-hand side, we infer that 
        \begin{align*}
            & \norm{\xi}^2_{\LT \infty {\Lx2} \cap \LT 2 {\Hx2}} + \norm{\rho}^2_{\LT \infty {\Lx2} \cap \LT 2 {\Hx1}} \\
            & \quad \le C \left( \norm{\phiob_1 - \phiob_2}^2_{\Hnx2} + \norm{\sigmaob_1 - \sigmaob_2}^2_{\Hx1} \right).
        \end{align*}
        Moreover, by comparison in \eqref{eq:xilip} and \eqref{eq:rholip}, we also easily get that 
        \[
            \norm{\xi}^2_{\HT 1 {(\Hnx2)^*}} + \norm{\rho}^2_{\HT 1 {(\Hx1)^*}} \le C \left( \norm{\phiob_1 - \phiob_2}^2_{\Hnx2} + \norm{\sigmaob_1 - \sigmaob_2}^2_{\Hx1} \right).
        \]
        Then, by the standard embeddings $\HT 1 {(\Hnx2)^*} \cap \LT 2 {\Hx2} \hookrightarrow \CT 0 {\Lx2}$ and $\HT 1 {(\Hx1)^*} \cap \LT 2 {\Hx1} \hookrightarrow \CT 0 {\Lx2}$, we finally deduce that 
        \[
            \norm{\xi(\cdot,T)}^2_{\Lx2} + \norm{\rho(\cdot,T)}^2_{\Lx2} \le C \left( \norm{\phiob_1 - \phiob_2}^2_{\Hnx2} + \norm{\sigmaob_1 - \sigmaob_2}^2_{\Hx1} \right),
        \]
        which is \eqref{eq:lipfrechet:aim}.
        This concludes the proof of Proposition \ref{prop:frechet}.
        
    \end{proof}

    \subsubsection{Lipschitz stability}

    The last ingredient we need to prove a Lipschitz stability estimate for the inverse problem is the injectivity of the Fr\'echet derivative $\D \Rcal(\phiob, \sigmaob)$ for any fixed point $(\phiob, \sigmaob) \in \Hx2 \times \Hx1$. 
    By Proposition \ref{prop:frechet}, we know that $\D\mathcal{R}(\phiob, \sigmaob)[h,k] = (\xi(\cdot,T), \rho(\cdot,T))$ for any $(h,k) \in H^2(\Omega) \times \Hx1$, where $\xi(\cdot,T)$ and $\rho(\cdot,T)$ are the solutions to \eqref{eq:xi}--\eqref{eq:icl}. 
    Therefore, by linearity, showing that $\D \Rcal(\phiob, \sigmaob)$ is injective means proving that if $(\xi(\cdot,T), \rho(\cdot,T)) = (0,0)$ in $\Lx2 \times \Lx2$, then also $(h,k)=(0,0)$ in $H^2_N(\Omega) \times \Hx1$.
    Equivalently, this means showing a backward uniqueness result in the spirit of Theorem \ref{cor:backuniq} for the linearised system \eqref{eq:xi}--\eqref{eq:icl}. 
    However, the new stability result shown in Theorem \ref{thm:log_stab} for general Cahn--Hilliard-reaction-diffusion systems through Carleman estimates allows us to show a quantitative injectivity result, by providing a stability estimate for the backward problem on the linearised system. 
    Then, this allows us to use the general result in \cite[Proposition 5]{BV2006} to its full extent, by also providing an explicit estimate of the Lipschitz stability constant.
    Indeed, we have the following.

    \begin{proposition}
    	\label{prop:inizstab_lin}
    	Assume hypotheses \ref{ass:setting}--\ref{ass:initial}. Let $(\phib, \mub, \sigmab)$ be a strong solution to \eqref{eq:phi}--\eqref{eq:icl} corresponding to some initial data $(\phi_0, \sigma_0) \in \Iad$. Moreover, let $(\xi, \eta)$ be a strong solution to \eqref{eq:xi}--\eqref{eq:icl} corresponding to $(h,k) \in H^2_N(\Omega) \times \Hx1$. 
    	Assume also that 
        \[ \norm{(h,k)}_{H^2_N(\Omega) \times \Hx1} \le L, \]
        for some $L > 0$ and call $L_1 > 0$ the minimal constant such that, by Proposition \emph{\ref{prop:linearised}},
    	\begin{equation}
        \label{eq:def:LL1} 
            \norm{(\xi, \rho)}_{\HT 1 {\Lx2 \times \Lx2}} \le L_1. 
        \end{equation}
    	Then, there exists a constant $Q_1 \gs 0$, depending only on the parameters of the system, such that we have the following conditional stability estimate for any $t_0 \in [0,T]$
    	\[ 
            \norm{(\xi(\cdot,t_0), \rho(\cdot,t_0))}_{\Lx2 \times \Lx2} \le Q_1 L^{1-\theta(t_0)} \norm{(\xi(\cdot,T), \rho(\cdot,T))}_{\Hx2 \times \Hx1}^{\theta(t_0)},  
        \]
    	if $\norm{(\xi(\cdot,T), \rho(\cdot,T))}_{\Hx2 \times \Hx1}$ is sufficiently small, where $\theta(t_0)$ is the same exponent appearing in Theorem \ref{thm:holder_stab}.
    	
    	Moreover, there exists a constant $Q_2 > 0$ such that 
    	\begin{equation}
    		\label{eq:inizstab_lin}
    		\norm{ (h, k) }_{\Lx2 \times \Lx2} \le Q_2 \left( \log \frac{L}{\norm{(\xi(\cdot,T), \rho(\cdot,T))}_{\Hx2 \times \Hx1}} \right)^{-\frac14}, 
    	\end{equation}
    	if $\norm{(\xi(\cdot,T), \rho(\cdot,T))}_{\Hx2 \times \Hx1}$ is sufficiently small, where we can quantify the constant as 
    	\begin{equation}
            \label{eq:def:Q2} 
            Q_2 = \frac{2 L_1^{3/4} C_1^{1/4}}{\beta^{1/4}}, \quad \text{with} \quad \beta = \frac{\lambda}{3e^{\lambda T} - 3}.
        \end{equation}
    \end{proposition}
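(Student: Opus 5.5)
The plan is to recast the linearised system \eqref{eq:xi}--\eqref{eq:icl} in the abstract form \eqref{eq:u}--\eqref{eq:bc}. Then Theorem \ref{thm:holder_stab} and Theorem \ref{thm:log_stab} apply directly, with $(u,v)=(\xi,\rho)$, $a=b=\chi$, $M$ replaced by $L$ and $M_1$ replaced by $L_1$.

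First, as in \eqref{eq:xi2}, I substitute \eqref{eq:eta} into \eqref{eq:xi} and expand $\Delta(F''(\phib)\xi)$. This gives
\[
\partial_t \xi + \Delta^2 \xi + \chi \Delta \rho = \tilde F, \qquad \partial_t \rho - \Delta \rho + \chi \Delta \xi = \tilde G,
\]
where $\tilde F$ is the right-hand side of \eqref{eq:xi2}. The second source is $\tilde G = -P'(\phib)(\sigmab+\chi(1-\phib)-\mub)\xi - P(\phib)(\rho-\chi\xi-\eta) - \kappa\rho$, in which $\eta$ is again replaced using \eqref{eq:eta}. The boundary conditions $\partial_{\n}\xi=\partial_{\n}\rho=0$ are given. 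The condition $\partial_{\n}\Delta\xi=0$ follows from $\partial_{\n}\eta=0$, $\partial_{\n}\xi=0$ and $\partial_{\n}\rho=0$, because $\partial_{\n}(F''(\phib)\xi) = F'''(\phib)\xi\,\partial_{\n}\phib + F''(\phib)\partial_{\n}\xi = 0$. Proposition \ref{prop:linearised} supplies exactly the regularity \eqref{eq:hp_reg}.

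The main step is checking \eqref{eq:hp_FG}, i.e.
\[
\norm{\tilde F}^2_{\Lx2}+\norm{\tilde G}^2_{\Lx2}\le K(t)\big(\norm{\xi}^2_{\Hx2}+\norm{\rho}^2_{\Hx1}\big)
\]
with $K\in\Lt\infty$. The terms involving $F''(\phib)$, $F'''(\phib)$, $F^{(4)}(\phib)$, $P(\phib)$, $\hh'(\phib)$, $\eta$ and $\rho$ are handled as in Lemma \ref{lem:FG}. I use Remark \ref{rmk:fderivatives}, the embeddings $\Hx2\hookrightarrow \Wx{1,4}\cap\Lx\infty$, and $\phib\in\LT\infty{\Hx2}$, which give a coefficient of the form $C(1+\norm{\phib}^4_{\Hx2})\in \Lt\infty$.

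I expect the obstacle to be the term $P'(\phib)(\sigmab+\chi(1-\phib)-\mub)\xi$. The factor $\mub$ is only in $\LT2{\Hx2}$, so the natural bound $\norm{\mub}_{\Lx\infty}\norm{\xi}_{\Lx2}$ gives $K\in\Lt1$ rather than $\Lt\infty$. I would try to avoid this by putting $\mub$ in $\Lx2$ and $\xi$ in $\Lx\infty\hookleftarrow\Hx2$:
\[
\norm{\mub\,\xi}_{\Lx2}\le \norm{\mub}_{\Lx2}\norm{\xi}_{\Hx2}.
\]
This needs $\mub\in\LT\infty{\Lx2}$. That holds by comparison in \eqref{eq:mu}: $\mub=-\Delta\phib+F'(\phib)-\chi\sigmab$, and $\phib\in\CT0{\Hx2}$, $\sigmab\in\CT0{\Hx1}$. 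The same argument handles $\sigmab$ and $\phib$. Hence $K\in \Lt\infty$.

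With \eqref{eq:hp_FG} and \eqref{eq:hp_reg} verified, I apply Theorem \ref{thm:holder_stab} with the a priori bound $\norm{(h,k)}\le L$ in place of \eqref{eq:hp_cond_holder}. This gives the Hölder estimate with constant $Q_1$ (the analogue of $C_1$) and the same exponent $\theta(t_0)$. The case $t_0\in(0,T)$ is direct. The case $t_0=T$ is trivial, since $\norm{\cdot}_{\Lx2}\le\norm{\cdot}_{\Hx2}$ and $\theta(T)=1$. For $t_0=0$ the Hölder estimate is read with $\theta(0)=0$, where it is just the a priori bound. Next, using \eqref{eq:def:LL1} in place of \eqref{eq:hp_cond_log}, Theorem \ref{thm:log_stab} yields \eqref{eq:inizstab_lin} with
\[
Q_2 = \frac{2L_1^{3/4}C_1^{1/4}}{\beta^{1/4}}.
\]
Here $C_1$ is the Hölder constant $Q_1$ obtained above, and it depends only on the system parameters through $K_\infty$ and $\chi$.
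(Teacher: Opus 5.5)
Your proposal is correct and follows the paper's proof. Like the paper, you rewrite the linearised system in the abstract form \eqref{eq:u}--\eqref{eq:bc} with $a=b=\chi$, check \eqref{eq:hp_FG} along the lines of Lemma~\ref{lem:FG}, and invoke Theorems~\ref{thm:holder_stab} and~\ref{thm:log_stab} with $(M,M_1)$ replaced by $(L,L_1)$. Your additions are exactly the points the paper leaves implicit, and in two places they correct typos in its written proof (which has $-\chi\Delta\rho$ and $\partial_{\n}\Delta\eta$):
\begin{itemize}
\item bounding $\mub$ in $L^\infty(0,T;L^2(\Omega))$ via \eqref{eq:mu}, so that $K\in L^\infty(0,T)$ rather than only $L^1(0,T)$;
\item deriving $\partial_{\n}\Delta\xi=0$ on the boundary;
\item the correct sign $+\chi\Delta\rho$ in the first equation;
\item the endpoint cases $t_0\in\{0,T\}$.
\end{itemize}
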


    \begin{proof}
        By rewriting the linearised system \eqref{eq:xi}--\eqref{eq:icl} only in terms of the independent variables $\xi$ and $\rho$, through the substitution of \eqref{eq:eta} into \eqref{eq:xi} and \eqref{eq:rho}, we get
        \begin{alignat*}{2}
        	& \partial_t \xi + \Delta^2 \xi - \chi \Delta \rho
        	= f_1 
        	\qquad && \hbox{in  $Q_T$}, \\
        	& \partial_t \rho - \Delta \rho + \chi \Delta \xi
        	= f_2 
        	\qquad && \hbox{in  $Q_T$}, \\
        	& \partial_{\n} \xi = \partial_{\n} \Delta \eta = \partial_{\n} \rho = 0 
        	\qquad && \hbox{on  $\Sigma_T$}, \\
        	& \xi(0) = h, \quad \rho(0) = k 
        	\qquad && \hbox{in  $\Omega$},
        \end{alignat*}
        where, upon some computations,  
        \begin{align*}
            f_1 & := F''(\phib) \Delta \xi + 2 F'''(\phib) \nabla \phib \cdot \nabla \xi + F'''(\phib) \Delta \phib \, \xi + F^{(4)} \nabla \phib \cdot \nabla \phib \, \xi \\
            & \quad + P'(\phib) (\sigmab + \chi (1 -\phib) - \mub) \xi + P(\phib) (\rho - \chi \xi + \Delta \xi - F''(\phib) \xi + \chi \rho) - \hh'(\phib) c \, \xi, \\
            f_2 & := - P'(\phib) (\sigmab + \chi (1 -\phib) - \mub) \xi - P(\phib) (\rho - \chi \xi + \Delta \xi - F''(\phib) \xi + \chi \rho) - \kappa \rho. 
        \end{align*}
        Thus, we observe that the above system has the same structure as the general system \eqref{eq:u}--\eqref{eq:bc}, with $\xi = u$, $\rho = v$, $a = b = \chi$, $F = f_1$ and $G = f_2$. 
        Moreover, one can easily verify that the source terms $f_1$ and $f_2$ satisfy hypothesis \eqref{eq:hp_FG}, by essentially repeating similar estimates to those performed in Lemma \ref{lem:FG}.
        Hence, we can apply Theorems \ref{thm:carleman}, \ref{thm:holder_stab} and \ref{thm:log_stab} to deduce the sought results on the linearised system.
    \end{proof}

    We are now in the position to apply \cite[Proposition 5]{BV2006} and establish a Lipschitz stability estimate, upon the additional restriction to a finite-dimensional subspace.
    For the reader's convenience, we report below the statement of the main tool we intend to use. 

    \begin{lemma}[\!\!\cite{BV2006}, Proposition 5]
    	\label{lem:vessella}
    	Let $R$ and $r_0$ be positive numbers. Let $\Lambda$ be an open subset of $\R^n$, for some $n \in \N$, and let $K$ be a subset of $\Lambda$. Assume that 
    	\begin{equation}
    		\label{eq:ass_finitedim}
    		K \subseteq B_R(0) \quad \text{and} \quad B_{r_0}(p) \subseteq \Lambda, \text{ for any } p\in K.
    	\end{equation}
    	Let $\mathcal{B}$ be a Banach space and $G: \Lambda \to \mathcal{B}$ an operator. Assume that $G$ satisfies the following conditions:
    	\begin{itemize}
    		\item[$(i)$] $G_{\mid K}$ is injective,
    		\item[$(ii)$] $(G_{\mid K})^{-1}: G(K) \to K$ is uniformly continuous with modulus of continuity $\omega_0(\cdot)$,
    		\item[$(iii)$] $G$ is Fr\'echet-differentiable with derivative $\D G$,
    		\item[$(iv)$] $\D G: \Lambda \to \mathcal{L}(\Lambda, \mathcal{B})$ is uniformly continuous with modulus of continuity $\omega_1(\cdot)$.
    		\item[$(v)$] there exists $m_0 \gs 0$ such that 
    			\[ \inf_{x \in K, \abs{y}=1} \norm{\D G(x)[y]}_{\mathcal{B}} \ge m_0. \]
    	\end{itemize}
    	Then, for any $x_1, x_2 \in K$
    	\begin{equation}
    		\abs{x_1 - x_2} \le C \norm{G(x_1) - G(x_2)}_{\mathcal{B}}, 
    	\end{equation}
    	where $C = \max\left\{ \frac{2R}{(\omega_0^{-1})_*(\delta_1)}, \frac{2}{m_0} \right\}$, $\delta_1 = \mezzo \min \{ \delta_0, r_0 \}$ and $\delta_0 = (\omega_1^{-1})_* \left( \frac{m_0}{2} \right)$. Here, for a modulus of continuity $\omega$, we denote $(\omega^{-1})_*(y) := \inf \,\{ x  \in [0, +\infty) \mid \omega(x) \ge y \}$.
    \end{lemma}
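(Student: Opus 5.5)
The plan is to split into two regimes according to the size of $\abs{x_1 - x_2}$: a local, linearised regime where the lower bound $(v)$ on $\D G$ gives the constant $2/m_0$, and a global regime where the uniform continuity $(ii)$ of the inverse together with $K \subseteq B_R(0)$ gives the constant $2R/(\omega_0^{-1})_*(\delta_1)$. Throughout I take the moduli $\omega_0, \omega_1$ to be nondecreasing with $\omega_i(0^+) = 0$. Then the definition of $(\omega^{-1})_*$ gives two facts: $\omega_1(\tau) < m_0/2$ whenever $\tau < \delta_0$, and $(\omega_0^{-1})_*(\delta_1) > 0$ since $\delta_1 > 0$.

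\emph{Local case, $\abs{x_1 - x_2} < \delta_1$.} Set $h := x_2 - x_1$. Since $\abs{h} < r_0/2 < r_0$, the whole segment $x_1 + \tau h$, $\tau \in [0,1]$, lies in $B_{r_0}(x_1) \subseteq \Lambda$ by \eqref{eq:ass_finitedim}. By $(iii)$ and the fundamental theorem of calculus along this segment,
\[
G(x_2) - G(x_1) = \D G(x_1)[h] + \int_0^1 \big( \D G(x_1 + \tau h) - \D G(x_1) \big)[h] \, \de \tau .
\]
Hypothesis $(v)$, applied with $y = h/\abs{h}$, gives $\norm{\D G(x_1)[h]}_{\mathcal B} \ge m_0 \abs{h}$. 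Hypothesis $(iv)$ bounds the integral term by $\omega_1(\abs{h})\,\abs{h}$. Because $\abs{h} < \delta_1 \le \delta_0/2 < \delta_0$, we have $\omega_1(\abs{h}) < m_0/2$. Hence
\[
\norm{G(x_2) - G(x_1)}_{\mathcal B} \ge \frac{m_0}{2}\abs{h},
\]
which is the bound with constant $2/m_0$.

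\emph{Global case, $\abs{x_1 - x_2} \ge \delta_1$.} By $(i)$ and $(ii)$,
\[
\delta_1 \le \abs{x_1 - x_2} \le \omega_0\big(\norm{G(x_1) - G(x_2)}_{\mathcal B}\big).
\]
The definition of the lower inverse then forces $\norm{G(x_1) - G(x_2)}_{\mathcal B} \ge (\omega_0^{-1})_*(\delta_1)$. Combining this with $\abs{x_1 - x_2} \le 2R$, which follows from $K \subseteq B_R(0)$, gives
\[
\abs{x_1 - x_2} \le \frac{2R}{(\omega_0^{-1})_*(\delta_1)}\,\norm{G(x_1) - G(x_2)}_{\mathcal B}.
\]
Taking the maximum of the two constants concludes the proof.

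The main obstacle is the local step, specifically keeping the linearisation valid. One must check that the segment stays inside $\Lambda$, which is where the factor $\tfrac12 \min\{\delta_0, r_0\}$ is used. One must also handle the edge cases of $(\omega^{-1})_*$: the strict inequality $\omega_1(\abs{h}) < m_0/2$ and the positivity of $(\omega_0^{-1})_*(\delta_1)$. These rely on monotonicity of the moduli and are where the precise definition of the lower inverse matters.
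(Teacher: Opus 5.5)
Your argument is correct. The paper does not prove this lemma; it imports it from Bacchelli--Vessella, and your two-regime proof is essentially the standard proof of that result: linearisation along the segment using $(iii)$--$(v)$ when $\abs{x_1-x_2}<\delta_1$, giving the constant $2/m_0$, and uniform continuity $(ii)$ of the inverse together with $\abs{x_1-x_2}\le 2R$ when $\abs{x_1-x_2}\ge\delta_1$. Your handling of $(\omega^{-1})_*$ is sound, and the argument does not even need monotonicity of the moduli, only $\omega_i(t)\to 0$ as $t\to 0^+$.
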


    \noindent
    Then, by applying Lemma \ref{lem:vessella}, we can prove the following quantitative Lipschitz stability result on the backward inverse problem.

    \begin{theorem}
    	\label{thm:lipstab}
    	Assume hypotheses \ref{ass:setting}--\ref{ass:initial}. Let $\Lambda$ be a finite-dimensional subspace of $H^2_N(\Omega) \times \Hx1$,
        and assume further that 
    	\[ (\phi_0, \sigma_0, p_0) \in \Iad \cap \Lambda. \]
    	Then, there exists a constant $C_s \gs 0$ such that for any choice of $({\phi_0}_1, {\sigma_0}_1)$ and $({\phi_0}_2, {\sigma_0}_2)$ in $\Iad \cap \Lambda$ the following stability estimate holds
    	\begin{equation}
    		\label{eq:lipstab}
            \begin{split}
                &\norm{ ({\phi_0}_1, {\sigma_0}_1) - ({\phi_0}_2, {\sigma_0}_2) }_{\Lx2 \times \Lx2} \\
                & \quad \le C_s \norm{ (\phi_1(\cdot,T), \sigma_1( \cdot,T)) - (\phi_2(\cdot,T), \sigma_2(\cdot,T)) }_{\Hx2 \times \Hx1},
            \end{split}
    	\end{equation}
    	where the constant $C_s$ can be quantified as 
    	\begin{gather*}
    	    C_s = \max \left\{ 2 e^{ \frac{16 C_0^2 C_2}{m_0}}, \frac{2}{m_0} \right\}, \\
            \text{with} \quad  
            m_0 = \frac{L}{C_{\Lambda}} e^{-Q^4_2} 
            \quad \text{and} \quad 
            C_{\Lambda} = \sup_{(h,k) \in \Lambda \setminus \{0\}} \frac{\norm{(h,k)}_{\Hx2 \times \Hx1}}{\norm{(h,k)}_{\Lx2 \times \Lx2}}, 
    	\end{gather*} 
    	where $C_0$ is the Lipschitz constant of the Fr\'echet derivative, given by \eqref{eq:lipconst:frechet}, and $M$, $L$, and $Q_2$ are defined in \eqref{eq:def:iad}, \eqref{eq:def:LL1}, \eqref{eq:def:Q2}.
    \end{theorem}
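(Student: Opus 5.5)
The plan is to apply Lemma \ref{lem:vessella} with $G = \Rcal$ restricted to the finite-dimensional space $\Lambda$. We identify $\Lambda \cong \R^n$ through a basis and use the $\Lx2 \times \Lx2$ norm on $\Lambda$; all norms are equivalent there, with $C_\Lambda$ comparing the $\Hx2\times\Hx1$ and $\Lx2\times\Lx2$ norms. We take $\mathcal{B} = \Hx2 \times \Hx1$ and $K = \Iad \cap \Lambda$, which lies in a ball of radius $R \simeq M$. We choose $r_0$ so that $B_{r_0}(p)$ stays in an open neighbourhood of $K$ on which all the estimates of Theorem \ref{thm:strongsols} and Propositions \ref{prop:linearised}--\ref{prop:inizstab_lin} hold with uniform constants, for instance by enlarging $M$ to $2M$. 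The task then reduces to checking conditions $(i)$--$(v)$ and tracking the constants.

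Conditions $(i)$ and $(ii)$ follow from the nonlinear backward results. Corollary \ref{cor:backuniq} gives injectivity of $\Rcal$ on $K$. Theorem \ref{thm:log_stab_tum} gives a modulus of continuity for the inverse of logarithmic type, $\omega_0(\tau) = C_2 (\log (M/\tau))^{-1/4}$ for small $\tau$. The $\Lx2$-norm on the left is exactly the norm chosen on $\Lambda$. Inverting this modulus at level $\delta_1$ yields $(\omega_0^{-1})_*(\delta_1) = M e^{-(C_2/\delta_1)^4}$, which produces the exponential factor in $C_s$.

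Conditions $(iii)$ and $(iv)$ come from Proposition \ref{prop:frechet}. Its estimate has the $\Lx2\times\Lx2$ norm as target, whereas here we need $\mathcal{B} = \Hx2\times\Hx1$. Either we check that the Gronwall argument there upgrades to the strong norm using the bounds of Proposition \ref{prop:linearised}, or we simply measure the data mismatch in $\Lx2\times\Lx2$, which is weaker and still sufficient. On $\Lambda$ the $\Hnx2\times\Hx1$ norm is controlled by $C_\Lambda$ times the $\Lx2$ norm, so we may take $\omega_1(\tau) = C_0 C_\Lambda \tau$ up to square roots. This gives $\delta_0 = m_0/(2C_0C_\Lambda)$ explicitly.

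The main obstacle is $(v)$, a uniform lower bound on $\D\Rcal$ over unit directions in $\Lambda$. Fix $(\phiob,\sigmaob)\in K$ and a direction $(h,k)\in\Lambda$ with unit $\Lx2$-norm, so that $\norm{(h,k)}_{\Hx2\times\Hx1}\le C_\Lambda$; we set $L = C_\Lambda$. Proposition \ref{prop:inizstab_lin} gives $1 \le Q_2 (\log (L/\norm{\D\Rcal[h,k]}))^{-1/4}$ whenever $\norm{\D\Rcal[h,k]}$ is small, so $\norm{\D\Rcal[h,k]}_{\mathcal{B}} \ge L e^{-Q_2^4}$. In the remaining case, where the derivative is not small, a lower bound holds trivially; taking the minimum and adjusting by $C_\Lambda$ gives $m_0 = \frac{L}{C_\Lambda} e^{-Q_2^4}$. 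The delicate point is uniformity: $Q_2$ depends on $L_1$ and $C_1$, and we must verify from Proposition \ref{prop:linearised} that these are bounded uniformly for $(\phiob,\sigmaob)$ in the enlarged set and $\norm{(h,k)}\le L$. Once this is done, inserting $\delta_1$, $\delta_0$ and $m_0$ into the constant of Lemma \ref{lem:vessella} and simplifying gives the stated form of $C_s$.
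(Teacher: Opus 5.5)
Your plan follows the paper's proof. You apply Lemma~\ref{lem:vessella} with $K=\Iad\cap\Lambda$, and take (i) from Corollary~\ref{cor:backuniq}, (ii) from Theorem~\ref{thm:log_stab_tum}, (iii)--(iv) from Proposition~\ref{prop:frechet}, and (v) from Proposition~\ref{prop:inizstab_lin} on unit directions of $\Lambda$. Several of your additions improve on the paper's sketch:
\begin{itemize}
\item Working on a bounded open neighbourhood of $K$ is needed, because $\bar C$ in \eqref{eq:contdep_estimate_strong}, and hence $C_0$, depends on the size of the data, so (iv) cannot hold uniformly on all of $\Lambda$.
\item You track the uniformity of $L_1$ and $C_1$ over $K$.
\item You fix $L=C_\Lambda$.
\item You use $\log(M/\tau)$ in $\omega_0$, which with $R=M$ explains the factor $2$ in $C_s$.
\end{itemize}

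The argument does not close at the choice of $\mathcal{B}$ in (iii)--(iv), and your second option there (measuring the mismatch in $\Lx2\times\Lx2$) does not work. In Lemma~\ref{lem:vessella} the same norm $\mathcal{B}$ must enter (ii), (iv) and (v). For nearby points the lemma bounds $\norm{G(x_1)-G(x_2)}_{\mathcal{B}}$ from below by $\norm{\D G(x_2)[x_1-x_2]}_{\mathcal{B}}-\omega_1(\abs{x_1-x_2})\abs{x_1-x_2}$, and for distant points it uses (ii). However, Theorem~\ref{thm:log_stab_tum} and Proposition~\ref{prop:inizstab_lin} control the past only through the $\Hx2\times\Hx1$ norm of the final state. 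This is because the boundary terms $B_2$, $B_4$, $B_5$ of Theorem~\ref{thm:carleman} at $t=T$ involve $\Delta u(\cdot,T)$, $\nabla u(\cdot,T)$ and $\nabla v(\cdot,T)$. A lower bound on $\norm{\D\Rcal(\phi_0,\sigma_0)[h,k]}_{\Hx2\times\Hx1}$ gives no lower bound on its $\Lx2\times\Lx2$ norm. With $\mathcal{B}=\Lx2\times\Lx2$ you would get (ii) and (v) only qualitatively, from compactness of $K$ and of the unit sphere of $\Lambda$ plus injectivity. That yields some $C_s$, but not the quantified one.

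So $\mathcal{B}=\Hx2\times\Hx1$ is forced, and your first option is a genuinely missing step. You need an analogue of \eqref{eq:lipconst:frechet} with target $\Hx2\times\Hx1$: strong-norm estimates, in the spirit of Proposition~\ref{prop:linearised}, for the difference system \eqref{eq:xilip}--\eqref{eq:icllip}, with $\phib_1-\phib_2$, $\mub_1-\mub_2$, $\sigmab_1-\sigmab_2$ controlled through \eqref{eq:contdep_estimate_strong}. Once that is proved, (iii) with values in $\Hx2\times\Hx1$ follows at no cost. The identity $\Rcal(x+y)-\Rcal(x)-\D\Rcal(x)[y]=\int_0^1(\D\Rcal(x+ty)-\D\Rcal(x))[y]\,\de t$ holds in $\Lx2\times\Lx2$ by Proposition~\ref{prop:frechet}, and then also in $\Hx2\times\Hx1$, since the integrand is continuous into that space. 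You cannot borrow this from the paper: its proof declares $\Rcal$ with values in $\Lx2\times\Lx2$ for (iii)--(iv) but verifies (v) in the $\Hx2\times\Hx1$ norm, so it has the same hole.

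Finally, the bookkeeping will not literally reproduce the displayed $C_s$:
\begin{itemize}
\item With your moduli the lemma gives $\max\{2e^{(C_2/\delta_1)^4},2/m_0\}$ with $\delta_1=\frac12\min\{\delta_0,r_0\}$.
\item When $\Lambda$ carries the $\Lx2\times\Lx2$ norm, $\omega_1$ picks up a factor $C_\Lambda^2$ (one from the operator norm, one from the increment), not $C_\Lambda$.
\item Your treatment of the non-small case makes $m_0$ a minimum with the smallness threshold of Proposition~\ref{prop:inizstab_lin}.
\end{itemize}
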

    
    \begin{proof}
        The proof follows the same strategy used in \cite[Theorem 3.10]{BCFLR2024} for a similar backward problem.
    	Indeed, we want to apply Lemma \ref{lem:vessella}, so we have to verify its hypotheses. 
        Hence, we set $X = H^2_N(\Omega) \times \Hx1$ and $\Lambda$ as above. Being a finite-dimensional subspace of $H^2_N(\Omega) \times \Hx1$ means that $\Lambda \simeq \R^n$ in the notation of Lemma \ref{lem:vessella}, therefore we can immediately see that $K := \Iad \cap \Lambda \subset \Lambda$ is a compact subset of $\Lambda$, so it satisfies hypothesis \eqref{eq:ass_finitedim}.
    	We consider the operator $\Rcal: \Lambda \subseteq H^2_N(\Omega) \times \Hx1 \to \Lx2 \times \Lx2$ defined as $\Rcal((\phi_0, \sigma_0)) = (\phi(\cdot,T), \sigma(\cdot,T))$. 
    	Hypothesis $(i)$ on the injectivity of $\Rcal_{\mid K}$ follows directly from Corollary \ref{cor:backuniq}. 
    	Also hypothesis $(ii)$ follows immediately from Theorem \ref{thm:log_stab_tum}, with modulus of continuity $\omega_0(r) = C_2 \left( \log (1/r) \right)^{-1/4}$. 
    	Then, hypotheses $(iii)$ and $(iv)$ on Fr\'echet differentiability follow from Proposition \ref{prop:frechet}, with modulus of continuity $\omega_1(r) = C_0 r$. Finally, we have to check condition $(v)$. 
    	To do this, notice that for any $(\phi_0, \sigma_0) \in K$ and any $(h,k) \in \Lambda$, we have that $\D\Rcal (\phi_0, \sigma_0) [h,k] = (\xi(\cdot,T), \rho(\cdot,T))$, where $(\xi, \rho)$ is the solution to \eqref{eq:xi}--\eqref{eq:icl}. 
    	Now, by Proposition \ref{prop:inizstab_lin}, we can use \eqref{eq:inizstab_lin} to say that, if $\norm{(h,k)}_{\Hx2 \times \Hx1} \le L$, it holds that
    	\[ 
            \norm{(h,k)}_{\Lx2 \times \Lx2} \le Q_2 \left( \log \frac{L}{\norm{(\xi(\cdot,T), \rho(\cdot,T))}_{\Hx2 \times \Hx1}} \right)^{-\frac14}, 
        \]
    	which implies that 
    	\[ 
            \norm{(\xi(\cdot,T), \rho(\cdot,T))}_{\Hx2 \times \Hx1} \ge L e^{- \frac{Q_2^4}{\norm{(h,k)}^4_{\Lx2 \times \Lx2}}}. 
        \]
    	Hence, we can verify condition $(v)$ as
    	\begin{align*}
    		& \inf_{\substack{(\phi_0, \sigma_0) \in K \\ (h,k) \in \Lambda \setminus \{0\}}} \frac{\norm{(\xi(\cdot,T), \rho(\cdot,T))}_{\Hx2 \times \Hx1}}{\norm{(h,k)}_{\Hx2 \times \Hx1}} 
    		\ge \inf_{\substack{(\phi_0, \sigma_0) \in K \\ (h,k) \in \Lambda \setminus \{0\}}} \frac{\norm{(\xi(\cdot,T), \rho(\cdot,T))}_{\Hx2 \times \Hx1}}{C_{\Lambda} \norm{(h,k)}_{\Lx2 \times \Lx2}} \\
    		& \quad = \inf_{\substack{(\phi_0, \sigma_0) \in K \\ \norm{(h,k)}_{\Lx2 \times \Lx2}=1 }} \frac{\norm{(\xi(\cdot,T), \rho(\cdot,T))}_{\Hx2 \times \Hx1}}{C_{\Lambda} \norm{(h,k)}_{\Lx2 \times \Lx2}}
    		\ge \frac{L}{C_{\Lambda}} e^{-Q_2^4}, 
    	\end{align*}
    	where we used the linearity of $\D\Rcal$ and we estimated
        \[
            \norm{(h,k)}_{\Hx2 \times \Hx1} \le C_{\Lambda} \norm{(h,k)}_{\Lx2 \times \Lx2},
        \]
        for any $(h,k) \in \Lambda$, since $\Lambda$ is finite-dimensional and thus all norms on it are equivalent.
    	
    	Then, we can apply Lemma \ref{lem:vessella} and conclude the proof of Theorem \ref{thm:lipstab}. 
    \end{proof}
    
    \begin{remark}
        We point out that the constant $C_s$ blows up exponentially as the dimension of $\Lambda$ goes to infinity. 
        Indeed, as one can see from its expression, $C_s$ has a direct exponential proportionality to the constant $C_\Lambda$, which is finite only if $\Lambda$ has a finite dimension and blows up as it becomes larger. 
        Additionally, one can also notice that the dependence of $C_s$ on the final time is even worse, namely $\log(C_s)$ depends exponentially on $T$. 
        This means that, even if we have some kind of Lipschitz stability, one has to be very careful when designing numerical algorithms to approximate the solution.
    \end{remark}

    \section{Conclusions}

    In the present work, we addressed the backward inverse problem for a coupled system of Cahn--Hilliard-reaction-diffusion equations with cross-diffusion (chemotaxis) and source terms, establishing a   novel Carleman estimate which provides the starting point to derive rigorous stability estimates for the inverse problem. 
    A key technical advance of our approach is that it removes the smallness restriction on the chemotaxis coefficients, which had been a limitation in earlier studies relying on logarithmic convexity arguments. This makes the framework more robust for biological applications, where such a restriction may be physically unrealistic.
    We derived H\"{o}lder-type stability estimates for the reconstruction of past states at any intermediate positive time $t_0 \in (0, T)$ and a logarithmic stability estimate for the initial datum at $t=0$. These results ensure that, given a single final-time observation at time $T$, earlier configurations of the system can be recovered with quantitative reliability under natural a priori bounds.
    
    We then apply these theoretical results to a phase-field tumour growth model widely studied in literature, which describes the tumour growth in the a-vascular phase and couples the tumour growth with a nutrient dynamics through chemotaxis and source and consumption terms. Thanks to the application of the Carleman estimate, we proved backward uniqueness and quantitative stability for the recovery of early tumour states given a single final time measurement.
    Furthermore, we established a Lipschitz stability result on finite-dimensional subspaces of admissible sets for the initial data, providing a solid mathematical foundation for the development of convergent iterative discretisation algorithms.

    The application of these theoretical results to phase-field tumour growth models offers significant clinical implications. Indeed, identifying the tumour's state at initiation or earlier stages of development can provide important diagnostic and prognostic informations to clinicians which may improve diagnostic accuracy and the design of personalised radiotherapy or surgical interventions. 
    For these reasons, future research will focus on the development of convergent and robust iterative discretised solvers for the backward inverse problem associated to phase-field tumour growth models, leveraging on efficient finite element approximations or neural network algorithms to solve the severely ill-posed inverse reconstruction problem under different clinically relevant situations.
    Further developments will concern the extension of the present theoretical framework to more complex models, in which the derivation of Carleman estimates and regularity results is more challenging, including models incorporating anisotropic effects or multi-species interactions.
    
    \bigskip
	
    \noindent\textbf{Acknowledgements.} This research is part of the activities of ``Dipartimento di Eccellenza 2023-2027'' of Universit\`a degli Studi di Milano (C.~Cavaterra and M. Fornoni). E.~Beretta has been partially supported by NYUAD Science Program Project Fund AD364.
    A. Agosti, C. Cavaterra and M. Fornoni are members of GNAMPA (Gruppo Nazionale per l’Analisi Matematica, la Probabilità e le loro Applicazioni) of INdAM (Istituto Nazionale di Alta Matematica). 

    \newpage 

    \footnotesize

\end{document}